\pgfplotsset{compat = newest}
\newcommand{\orcid}[1]{\href{https://orcid.org/#1}{\textcolor[HTML]{A6CE39}{\aiOrcid}}}
\providecommand{\keywords}[1]{\textbf{\textit{Keywords:}} #1}
\newcommand{\de}{\mathrm{d}}
\newcommand{\dof}{\mathrm{dof}}
\newcommand{\R}{\mathbb{R}}
\newcommand{\Phat}{\widehat{\mathcal{P}}}
\newcommand{\Ihat}{\widehat{I}}
\newcommand{\That}{\widehat{T}}
\newcommand{\LTG}{\mathrm{LTG}}
\newcommand{\loc}{\text{loc}} 
\newcommand{\Nloc}{N_{\text{\tiny{loc}}}}
\newtheorem{theorem}{Theorem}[section]
\newtheorem{lemma}[theorem]{Lemma}
\newtheorem{remark}[theorem]{Remark}
\newtheorem{proposition}[theorem]{Proposition}
\newtheorem{definition}[theorem]{Definition}
\author[a]{Ludovico Bruni Bruno\footnote{bruni@math.unipd.it} \orcidlink{0000-0002-5246-8049}}
\author[b]{Matteo Semplice\footnote{matteo.semplice@uninsubria.it}\orcidlink{0000-0002-2398-0828}}
\author[b,c]{Stefano Serra-Capizzano\footnote{s.serracapizzano@uninsubria.it, stefano.serra@it.uu.se}\orcidlink{0000-0001-9477-109X}}
\affil[a]{Department of Mathematics \lq\lq Tullio Levi-Civita\rq\rq, University of Padova, Via Trieste 63, Padova, Italy}
\affil[b]{Department of Science and High Technology, Division of Mathematics,  University of Insubria}
\affil[c]{Department of Information Technology, Division of Scientific Computing, Uppsala University}
\date{}
\title{The numerical linear algebra of weights: from the spectral analysis to conditioning and preconditioning in the Laplacian case}
\begin{document}
\maketitle

\begin{abstract}
Weights are geometrical degrees of freedom that allow to generalise Lagrangian finite elements. They are defined through integrals over specific supports, well understood in terms of differential forms and integration, and lie within the framework of finite element exterior calculus. In this work we exploit this formalism with the target of identifying supports that are appealing for a finite element approximation.
To do so, we study the related parametric matrix-sequences, with the matrix order tending to infinity as the mesh size tends to zero. We describe the conditioning and the spectral global behavior in terms of the standard Toeplitz machinery and GLT theory, leading to the identification of the optimal choices for weights. Moreover, we propose and test ad hoc preconditioners, in dependence of the discretization parameters and in connection with conjugate gradient method.
The model problem we consider is a onedimensional Laplacian, both with constant and non constant coefficients.
Numerical visualizations and experimental tests are reported and critically discussed, demonstrating the advantages of weights-induced bases over standard Lagrangian ones. Open problems and future steps are listed in the conclusive section, especially regarding the multidimensional case.
\end{abstract}

\keywords{Exterior calculus, finite elements, weight selection, spectral distribution of matrix-sequences, conditioning, preconditioning}

\par{\bf AMS subject classifications} 65N30, 15A18, 47B35, 65F10, 65F08

\section{Introduction}\label{sec:intro}

Discretisations of (differential) operators usually yield very large matrices. Performing valuable spectral analyses is thus across-the-board to all numerical methods \cite{opiccolo,tom-paper,GSCS,Rahla}. These analyses can also be used to select optimal parameters, as done in the onedimensional \cite{sixtine1} and multidimensional \cite{sixtine2} hyperbolic framework, where such an analysis has been recently used to investigate stabilization parameters and CFL numbers. The principal aim of this work is to carry a spectral analysis of the generalisation of Lagrangian finite element method induced by weights \cite{BruniThesis} with the target of detecting optimal bases.

One of the main ingredients of a finite element method (FEM) is the selection of degrees of freedom (\emph{dofs}). In fact, \emph{dofs} are required in two key steps of the approximation of a partial differential equation (PDE) via such a method: they prescribe gluing conditions between local solutions on each element and they specify, by duality, the basis of shape functions with respect to which the global solution is expanded \cite{ErnI}.

To begin with, consider an open bounded set $ \Omega \subset \mathbb{R}^d$, $d\ge 1$. 
Recall that the standard approach of a finite element method consists in converting the differential equation
$$ \begin{cases}
  Lu = f \qquad \text{in } \Omega, \\
  \text{boundary conditions on } \partial \Omega,
\end{cases}$$
in the weak formulation
\begin{equation} \label{eq:bilinearfrom}
  a(u,v) = l(f) .
\end{equation}
The domain $ \Omega $ is splitted into or approximated by a mesh $ \mathcal{T} $ 
of $n$ elements such that $ \bigcup_{T \in \mathcal{T}} T = \Omega $ or $ \bigcup_{T \in \mathcal{T}} T \approx \Omega $ in a certain measure, usually the Peano-Jordan one. Then the solution $ u $ is sought in some finite dimensional space $ X $, hence the bilinear form $ a $ may be represented as a finite dimensional matrix, whose entries are the values of the bilinear form computed on a basis of $ X $.

A crucial aspect of finite elements consists in choosing bases of the space $ X $ that are made of functions with \emph{small support}. In particular, shape functions are supported \emph{only on one element $ T $ or few neighbouring elements}, so that the matrix $ A $ representing  $ a: X \times X \to \mathbb{R} $, written with respect to some basis $ \{\varphi_i \}_{i=1}^N $ of $ X $, $ N \doteq \dim X$, namely
$$ A_{i,j} \doteq a (\varphi_i, \varphi_j) ,$$
is sparse and typically sparse in a $d$-level sense if $ \Omega \subset \mathbb{R}^d$, with $d>1$ \cite{Rahla}.


While the choice of the space $X$ controls the Galerkin approximation error,
the choice of a basis for $ X $ heavily affects the conditioning of the matrix $ A $: see the discussion in Section \ref{sec:pb} and a visualization in Figure \ref{fig:optdeg34}. Hence this choice is as important as the approximation features, for mostly two reasons:
\begin{itemize}
  \item the speed of convergence of an iterative solver is directly affected by the conditioning, in a way depending often on the method itself;
  \item a convenient choice of basis may make it easier to construct an effective preconditioner for $A$.
\end{itemize}
%
%
%
%
%

In this paper we aim at studying and optimizing the use of \emph{weights} as degrees of freedom for FEM, detecting the effect on the induced basis for $ X $. Weights are degrees of freedom designed for having an evident geometrical and physical meaning. They were introduced in the context of computational electromagnetism \cite{BossavitBook} and found a relevant role in finite element exterior calculus (FEEC) thanks to their natural interpretation as integrals of differential forms \cite{m2an,RB09}. Throughout the years, they consolidated their role in approximation theory \cite{BE23,BruniRunge} and the corresponding interpolator \cite{nostropreprint,ABRICO} has been reconsidered for FEM approximations \cite{Zappon}. While these dofs coincide with moments  on the lowest degree \cite{Nedelec80,Nedelec86}, for $ r > 1 $ the two approaches sensibly diverge, as the choice of weights turns out simply to be the selection of the supports of integration \cite{BruniThesis}. If, on the one hand, in the context of interpolation it is easily shown that the placement of nodes, edges, faces seriously affects the corresponding projector (usually analysed via the generalised Lebesgue constant \cite{AnaFra}), it is harder to detect the role of this selection in FEM. For this reason, in literature uniform distributions are mostly taken into account \cite{m2an,Zappon}.

As usual in FEM, the global system matrix can be assembled from small ``local matrices'' of the size of $\Nloc\times\Nloc$, where $\Nloc$ is the number of dof's per element. 
In the context of FEM built on $n$ affinely equivalent elements, these local contributions coming from each element will all have the same structure and  the corresponding system matrix $A_n$ will end up having a (generalized) Toeplitz structure. The spectral properties of the matrix sequence $\{A_n\}$ obtained when  increasing the number of elements can thus be analysed following a neat and linear strategy driven by the Toeplitz machinery \cite{BS} and the GLT theory \cite{GLTbookI,etnaGLTbookIII}, which relates them to linear algebraic properties of the small building blocks.
In this way studying the small building block attached to the reference element is enough to study and optimize the basis for the entire FEM space $X$.

In particular, weights-based continuous finite elements in one space dimension will have a degree of freedom located at each endpoint and the others dof's shall be placed inside. Obvious symmetry considerations suggest that the placements of weights inside the element can be described by a single parameter for local spaces $\mathbb{P}_r$ with $r=3,4$, on two parameters for $r=5,6$, etc.
Symmetry sets a remarkable difference between the onedimensional case, which we shall treat here, and the multidimensional one, which requires non-symmetrical choices of weights to obtain unisolvent sets \cite{BFZ23}.
We exploit this simplification to study the one-parameter family of weights based continous FEM through the GLT machinery, leading to a proposal for the weigths location that minimizes the conditioning of $A$ and that allows the construction of optimal and robust preconditioners for the conjugate gradient method. 
Although this analysis sensibly diverges from that usually performed on weights for interpolation theory, it is worth pointing out that optimality and quasi-optimality results fall somewhat close to each other.

The paper is organized as follows. 
In Section \ref{sect:bases} we introduce the formalism of FEEC, the corresponding bases, the notion of weights dof's, their duality and the weak formulation of our test cases.
Section \ref{sec:spectral-tools} contains a short account on the notion of spectral distribution in the Weyl sense, the definition of block Toeplitz and block circulant structures, the $*$-algebra of GLT matrix-sequences, and their main operative features. 
Section \ref{sec:spectral} is devoted to deduce the spectral symbol and the key spectral characteristics of weight-based FEM, using the tools given in the previous two sections.
As a further step, in Section \ref{sec:pb}, we specify degrees 3 and 4. Since these cases are associated with a one-parameter family of GLT sequences, we carry an explicit spectral analysis and the consequent optimization, together with a proposal of a preconditioning strategy.
Theoretical findings are supported by several visualizations and numerical experiments of Section \ref{sec:numer}. Finally, in Section \ref{sec:end}, we give a summary of the present work and we indicate future liness of research opened by the current investigation, especially in the direction of multidimensional problems.

Throughout the paper, we assume for granted basic facts of differential geometry and discrete differential geometry. The interested reader is addressed, for instance, to \cite{AMRbook} and \cite{DEC}. We are not coming across technical aspects of these topics: as a consequence, descriptions in terms of functions and fields provide a sufficiently evocative description already at a first read.

\section{Weights-based FEM} \label{sect:bases}

In the framework of \emph{finite element exterior calculus} \cite{AFW06}, shape functions are thought of as differential forms $ \Lambda^k (\Omega) $ \cite{Warner}, namely sections of the $k$-th exterior power of the cotangent bundle of $ \Omega $. Thus, $ 0$- and $n$-forms (with $ n = \dim \Omega $) are functions, $1$- and $ (n-1)$-forms are (dual to) vector fields, and so on. 
An overview of possible local spaces, together with their assembled counterparts, is offered in the \emph{Periodic Table of finite elements} \cite{PeriodicTable}. Among them, we recall Lagrangian elements, N\'ed\'elec elements, Raviart-Thomas elements, and discontinuous elements.

Regardless of the local basis, the construction of the bases for the FEM discretization space is generally performed using the so-called affinely equivalent elements; see \cite[Chapter $2.3$]{CiarletBook} and \cite[Chapter $9$]{ErnI} for a classical introduction or \cite{Licht} and \cite{Hiptmair99} for a FEEC-oriented approach.

Fix a reference element $ \That $ and a vector space $\Phat\subset\Lambda^k(\widehat{T})$ of $k$-forms defined on  $ \widehat{T} $. 
For any element $T_i\in\mathcal{T}$, let $ \psi_i: T_i \to \That $ be an invertible affinity such that $ \psi_i (T_i) = \That $. It induces a pullback
$$ \psi_i^* : \quad \Lambda^k (\That) \to \Lambda^k (T_i) $$
which preserves polynomial objects \cite{AFW06,BruniThesis}.
This pullback is declined in various ways depending on $ k $: composition for nodal elements, Piola (covariant and contravariant) mappings for vector fields, and so on. As a consequence, the choice of $\Phat$ induces a space of forms $\mathcal{P}_i$ defined on $T_i$ and basis on $ \That $ induces a basis of $\mathcal{P}_i$.
The FEM space $X$ will be a space of forms defined on $\bigcup_{T_i \in \mathcal{T}}T_i$ such that their restriction to any element $T_i$ belongs to $\mathcal{P}_i$.


\subsection{Bases in duality}\label{sec:bases-duality}
The basis on $\Phat$ will be specified by duality with the preferred degrees of freedom. Let $\{\dof_i:\Phat\to\R\}_{i=1}^{\Nloc}$ be a collection of linear functionals that form a basis of the dual space of $\Phat$, where $\dim\Phat=\Nloc$. 
By duality the degrees of freedom induce a basis $ \{ \omega_i \}_{i=1}^{\Nloc} $
on $\widehat{\mathcal{P}}$, such that
$$ \mathrm{dof}_i (\widehat\omega_j) = \delta_{i,j}.$$

When mappings $  \mathrm{dof}_i $ are \emph{moments} \cite{AFW06,Nedelec80,Nedelec86} or \emph{weights} \cite{m2an,RB09}, these bases are not explicitly known. Nevertheless, if $ \{\widehat\varphi_i \}_{i=1}^{\Nloc} $ is any \emph{convenient} basis, say one easily computable, one may consider the \emph{generalized Vandermonde matrix} \cite{ErnI} $V$, defined as
\begin{equation} \label{eq:VdMdofs}
  V_{i,j} \doteq \mathrm{dof}_i (\widehat\varphi_j),
\end{equation}
in order to obtain an explicit expression for the dual basis \cite{BruniThesis}. In fact, since $ \{\widehat\omega_i \}_{i=1}^{\Nloc} $ and $ \{\widehat\varphi_i \}_{i=1}^{\Nloc} $ are bases for the same vector space $ \Phat $, they must be related as
\begin{equation}\label{eq:W:def}
  \widehat\omega_i = \sum_{j=1}^{\Nloc} W_{i,j} \widehat\varphi_i,
\end{equation}
where matrix $W$ is invertible.
Then
$$ V_{i,j}
= \mathrm{dof}_i (\widehat\varphi_j)
= \mathrm{dof}_{i} \left(\sum_{k=1}^N W_{j,k}^{-1} \widehat\omega_k \right)
= \sum_{k=1}^N W_{j,k}^{-1} \mathrm{dof}_i (\widehat\omega_k)
= W^{-1}_{j,i}
= W^{-T}_{i,j} $$
and, as a consequence, \
\begin{equation}\label{eq:W:VT}
    W = V^{-T}.
\end{equation}

 \subsection{Weights} \label{ssec:weights}

  For the assembly process the above finite elements come with specific degrees of freedom. However, one may keep shape functions and design different and more problem-oriented linear functionals. In this spirit, one may observe that a natural operation on differential forms is \emph{integration}: a differential $ k$-form can be integrated over a manifold of dimension $ k$. The latter suggests the introduction of \emph{weights} \cite{BossavitBook,m2an,RB09} as a generalization of \emph{Lagrangian finite elements}.

  \begin{definition}
    Let $ \omega $ be a smooth $k$-form and let $ s $ be a $k$-simplex. The weight of $ \omega $ over $ s $ is
    \begin{equation}
      w(\omega, s) \doteq \int_s \omega .
    \end{equation}
  \end{definition}

  Weights provide meaningful degrees of freedom for any finite dimensional space of polynomial differential forms 
  $\Phat$ (and therefore for any $ \mathcal{P}_{k} $) as long as the generalized Vandermonde matrix \eqref{eq:VdMdofs} is square and invertible. Notice that, in this case, the $(i,j) $-th element of this matrix is simply
  \begin{equation} \label{eq:VdMweights}
    V_{i,j} = w(\omega_j, s_i) = \int_{s_i} \omega_j ,
  \end{equation}
  being $ \{ s_i \}_{i=1}^{\Nloc} $ a collection of $ \Nloc = \dim \Phat $ $k$-simplices supported in $ \That $. Any set $ \{ s_i \}_{i=1}^{N_{loc}} $ such that $ \det V_{i,j} \ne 0 $ is said to be \emph{unisolvent} for the space $ \Phat $. 
  The search for unisolvent sets is, in general, a tough problem, which has only partial theoretical solutions in the case of simplicial elements \cite{BruniThesis,BBZ22}. A recent work \cite{BE23} showed some explicit examples and obstructions, already in the onedimensional setting.

\subsection{Bilinear form and stiffness matrix}
\label{sec:bilinear:stiffness}

Let $ \Omega \subset \mathbb{R}^d $ and let $ \Lambda^k (\Omega) $ be the space of differential $k$-forms on $ \mathbb{R}^d $. 
We define the differential problem 
\begin{equation} \label{eq:strongHL}
  \begin{cases}
    - L \omega = f, \\
    \text{boundary conditions on } \omega \bigl|_{\partial I} .
  \end{cases}
\end{equation}

Given two smooth functions  $ b,c: \Omega \to \mathbb{R} $, we define $ L: \Lambda^k (\Omega) \to \Lambda^k (\Omega) $ as
\[
  L = b \mathrm{d} c \mathrm{d}^* + c \mathrm{d}^* b \mathrm{d} .
\]
If $ b = c=1 $ this operator is the usual Hodge Laplacian \cite{Warner}
$$
\Delta_H: \quad \omega \mapsto \left( \mathrm{d} \mathrm{d}^* + \mathrm{d}^* \mathrm{d} \right) \omega, $$
%
whose role in PDEs is largely described in \cite{Schwarz}. 
A weak formulation for \eqref{eq:strongHL} is obtained by introducing the $L^2$-inner product on differential forms
\begin{equation} \label{eq:l2innerforms}
  (\omega, \eta)_2 \doteq \int_{\Omega} \omega \wedge \star \eta ,
\end{equation}
see \cite{Lee}. The operator $ \star $ is the Hodge star isomorphism. 
Stokes' Theorem
$$ \int_{\Omega} \de \omega = \int_{\partial \Omega} \omega ,$$
and Leibniz' rule lead to the following integration by parts formula
\begin{equation} \label{eq:intbypartforms}
  \int_{\Omega} \de \omega \wedge \eta = (-1)^{k-1} \int_\Omega \omega \wedge \de \eta + \int_{\partial \Omega} \omega \bigl|_{\partial \Omega} \wedge \eta \bigl|_{\partial \Omega} ,
\end{equation}
which prescribes the boundary conditions to turn the first equation of \eqref{eq:strongHL} into the \emph{Poisson problem for differential forms}. For our purposes, we consider $ d = 1 $ and fix $ c = 1 $; imposing $ \omega \bigl|_{\partial \Omega} = 0 $ and exploiting \eqref{eq:intbypartforms} we deduce the weak formulation
$$ (b \de \de^* \omega, \eta)_2 + (\de^* b \de \omega, \eta)_2 = (f, \eta)_2 \quad \forall \eta \in \Lambda^k (\Omega). $$

%
Either the first or the second term of the above left hand side vanish. 
For $ k = 1 $, since weights reads as integrals over segments supported in $ I = \Omega $, one retrieves discontinuous elements. We thus stick with the case $ k = 0 $, where $ (b \de \de^* \omega, \eta)_2 = 0 $, and hence obtain the bilinear form
$$ a(\omega, \eta) \doteq \int_{I} b \de \omega \wedge \star \de \eta .$$
Identifying $0$-forms with functions, it is immediate to acknowledge that this is the counterpart of the elliptic problem with diffusion coefficient $b(x)$:
$$ \int_I b(x) u'(x) v'(x) = \int_I f(x) v(x) , \quad u \bigl|_{\partial I} = 0 .$$

A FEM Galerkin approach for the approximation of the previous equation leads to
 the linear system
$$ A \widetilde{u} = F ,$$
with $ A $ being the \emph{stiffness} matrix
\begin{equation} \label{eq:stiffness} 
A_{i,j} = a(\varphi_i,\varphi_j) 
= \int_{I} b(x) \varphi_i'(x) \varphi_j'(x) \de x
\end{equation}
and $ F $ the column vector with entries
$$ F_{i} = \int_{I} f(x) \varphi_i (x) \de x, $$
where $\{\varphi_i\}_{i=1}^{N}$ is a basis for the shape functions space $X$. 
Clearly the solution $ \widetilde{u} $ does not depend on the choice of basis functions $ \varphi_i $'s, but it is now explicit that the features of the matrix $ A $ do.

Our analysis will exploit the assembly procedure from local matrices, which is as follows.
Consider a basis for $X$ which is dual to a collection of $N=\dim(X)$ dof's together with a ``local to global'' mapping
\[
\LTG:\quad\{1,\ldots,\Nloc\}\times\mathcal{T}\to \{1,\ldots,N\}
\]
and the assumption that the global $\dof_{\LTG(i,k)}$ will coincide with the local $\dof_i$ when applied to the pullback of forms in $\mathcal{P}_k$; therefore that the restriction of the global basis functions will satisfy
\[
\forall i\in\{1,\ldots,\Nloc\} \,\,
\forall T_k\in\mathcal{T}:
\quad
\left. \varphi_{\LTG(i,k)} \right\vert_{T_k}
 =  \phi_k^* \left(\widehat\omega_i\right) .
\]
Then, for any element $T_k\in\mathcal{T}$, one introduces the local matrices by 
restricting the integrals in the weak formulation 
to one specific element $T_k$ and changing variables to perform integration on the reference element as
\begin{equation}\label{eq:Aloc}
A_{{i},{j}}^{(T_k)}
= \int_{\That}
       b(\psi_k(\hat{x}))
       \widehat{\omega}_{{i}}' (\hat{x})
       \widehat{\omega}_{{j}}' (\hat{x})
       \de \hat{x}
       .
\end{equation}
The global stiffness matrix can then be assembled as
\[
A=
  \sum_{T_k\in\mathcal{T}}
  \sum_{{i}=1}^{\Nloc}
  \sum_{{j}=1}^{\Nloc}
  \mathrm{E}_{\LTG({i},k)\,,\,\LTG({j},k)} \left( A^{(T_k)}_{{i},{j}}\right)
\]
where $\mathrm{E}_{i,j}(x)$ is an $N\times N$ matrix with only one nonzero entry, equal to $x$, at location $(i,j)$.

This yields the following description of the matrix of the system.

  \begin{proposition} \label{prop:bilinearchange}
  Let $A$ and $B$ be the stiffness matrices written with respect to the bases 
  $ \{\varphi_i\}_{i=1}^{N} $ and respectively
  $ \{\omega_i\}_{i=1}^{N} $;  
  there exists an invertible matrix $ M $ such that
  \begin{equation} \label{eq:changepart1}
      A = M^{-T} B M^{-1} .
    \end{equation}
    Moreover, the elemental matrices $ A^{(T_k)} $ and $B^{(T_k)} $ for $T_k\in\mathcal{T}$
    written with respect to  $ \{\widehat\varphi_i\}_{i=1}^{\Nloc} $
    and respectively to $ \{\widehat\omega_i\}_{i=1}^{\Nloc} $, satisfy
    \begin{equation} \label{eq:changepart2}
      A^{(T_k)} = V^{-T} B^{(T_k)} V^{-1} .
    \end{equation}
  \end{proposition}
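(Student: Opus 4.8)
The plan is to work at the level of a single element first, establish \eqref{eq:changepart2}, and then obtain \eqref{eq:changepart1} from it by summing over the mesh via the assembly formula. The key observation is that the local bases $\{\widehat\varphi_i\}$ and $\{\widehat\omega_i\}$ for $\Phat$ are related by the matrix $W$ of \eqref{eq:W:def}, and that by \eqref{eq:W:VT} we have $W = V^{-T}$. So the candidate for $M$ restricted to one element is precisely $V^{-T}$, and globally $M$ will be the matrix that collects these local relations through the $\LTG$ map.

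First I would prove \eqref{eq:changepart2}. Both $A^{(T_k)}$ and $B^{(T_k)}$ are Gram-type matrices of the bilinear form $(\eta,\zeta)\mapsto\int_{\That} b(\psi_k(\hat x))\,\eta'(\hat x)\,\zeta'(\hat x)\,\de\hat x$ evaluated on the two bases: $A^{(T_k)}_{i,j}$ uses $\{\widehat\varphi_i\}$ and $B^{(T_k)}_{i,j}$ uses $\{\widehat\omega_i\}$ (here one must be slightly careful about which symbol the paper attaches to which basis, but the statement fixes the convention). Substituting the change of basis — from \eqref{eq:W:def}, $\widehat\omega_i = \sum_j W_{i,j}\widehat\varphi_j$, hence $\widehat\varphi_i = \sum_j (W^{-1})_{i,j}\widehat\omega_j$ — into the integral, and using bilinearity together with the fact that differentiation is linear (so $\widehat\varphi_i' = \sum_j (W^{-1})_{i,j}\widehat\omega_j'$), one gets
\[
A^{(T_k)}_{i,j} = \sum_{p,q} (W^{-1})_{i,p}(W^{-1})_{j,q}\, B^{(T_k)}_{p,q},
\]
i.e.\ $A^{(T_k)} = W^{-1} B^{(T_k)} W^{-T} = (W^{-1})B^{(T_k)}(W^{-1})^{T}$. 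Now invoke \eqref{eq:W:VT}, $W = V^{-T}$, so $W^{-1} = V^{T}$ and $(W^{-1})^T = V$; this gives $A^{(T_k)} = V^{T} B^{(T_k)} V$. To match the stated form $A^{(T_k)} = V^{-T}B^{(T_k)}V^{-1}$ one simply notes the roles of $V$ and $V^{-1}$ are interchanged by which basis is declared ``the $\varphi$ one'' in \eqref{eq:VdMdofs} versus which is expanded; the cleanest route is to redo the substitution in the opposite direction, expanding $\widehat\omega$ in terms of $\widehat\varphi$ directly and reading off $A^{(T_k)}=V^{-T}B^{(T_k)}V^{-1}$ in one line. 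This part is essentially bookkeeping.

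For \eqref{eq:changepart1} I would use the assembly identity $A = \sum_{T_k}\sum_{i,j}\mathrm{E}_{\LTG(i,k),\LTG(j,k)}(A^{(T_k)}_{i,j})$, and the analogous one for $B$ with $B^{(T_k)}$. Define the global matrix $M$ block-wise through $\LTG$: roughly, $M$ is the $N\times N$ matrix whose action on global dof indices is ``locally $V$'' on each element, assembled consistently — the compatibility assumption on the $\LTG$ map (that global and local dof's agree on the pullbacks of $\mathcal P_k$) is exactly what makes this globally well defined and invertible. Substituting $A^{(T_k)} = V^{-T}B^{(T_k)}V^{-1}$ into the assembly sum and comparing with $M^{-T}(\sum_{T_k}\cdots B^{(T_k)}\cdots)M^{-1}$ yields $A = M^{-T}BM^{-1}$; invertibility of $M$ follows from invertibility of each local $V$ (Proposition's hypothesis / unisolvence) and the fact that assembly of invertible local changes-of-basis along a consistent $\LTG$ map is invertible.

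The main obstacle I expect is the global step: making precise what ``$M$ assembled from the local $V$'s'' means, and verifying that the $\LTG$ map does not cause inconsistencies at shared dof's (so that the same global basis function is not assigned two different local expansions). This is where the compatibility hypothesis on $\LTG$ stated just before the proposition must be used carefully; once that is in hand, the matrix identity itself is a direct consequence of the local identity \eqref{eq:changepart2} and linearity of the assembly operator $\mathrm{E}_{\cdot,\cdot}$. The transpose placement ($M^{-T}$ on the left, $M^{-1}$ on the right) is automatic because the bilinear form is evaluated on pairs of basis functions, producing a congruence rather than a similarity.
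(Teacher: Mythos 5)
Your treatment of \eqref{eq:changepart2} is essentially the paper's own argument: both elemental matrices are Gram matrices of the bilinear form in \eqref{eq:Aloc} with respect to the two local bases, these bases are related by $W=V^{-T}$ from \eqref{eq:W:VT}, and so the matrices are congruent. The transpose ambiguity you flag is real but purely notational: the paper's proof states that in this proposition $\{\widehat\varphi_i\}$ is the basis in duality with the degrees of freedom (the opposite of the convention in Section~\ref{sec:bases-duality}), and with that reading your ``opposite direction'' substitution gives exactly $A^{(T_k)}=V^{-T}B^{(T_k)}V^{-1}$, consistent with the later use in Theorem~\ref{thm:detsquared}. So for the local identity you and the paper coincide.

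For \eqref{eq:changepart1} you take a genuinely different, and needlessly heavier, route. The paper treats it as a general fact of linear algebra: $\{\varphi_i\}$ and $\{\omega_i\}$ are two bases of the same finite-dimensional space $X$, hence related by some invertible matrix, and the matrix of a bilinear form transforms by congruence under a change of basis; since only the \emph{existence} of an invertible $M$ is claimed, the placement of $M$ versus $M^{-1}$ is immaterial. In particular \eqref{eq:changepart1} does not need to be derived from \eqref{eq:changepart2} by assembly. Your construction of $M$ ``assembled from the local $V$'s through $\LTG$'' is the weak point you yourself identify: taken literally as $\sum_{k}\sum_{i,j}\mathrm{E}_{\LTG(i,k),\LTG(j,k)}(V_{i,j})$, it double-counts the shared interface dofs (the corner entries $V_{1,1}=V_{r+1,r+1}=1$ would sum to $2$ at shared global indices), so the global change-of-basis matrix is \emph{not} obtained by the same assembly operator as the stiffness matrix; the correct object is simply the global generalized Vandermonde matrix $M_{i,j}=\dof_i(\varphi_j)$, invertible because both families are bases, with no $\LTG$ bookkeeping required. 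Your invertibility claim for the assembled $M$ is asserted rather than proved, but since the trivial linear-algebra argument supplies the needed $M$ directly, this does not invalidate the result — it only means your global step should be replaced by (or reduced to) the standard transformation law.
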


  \begin{proof}
    Equality \eqref{eq:changepart1} is a general fact of linear algebra. 
    Equality \eqref{eq:changepart2} is a direct consequence of \eqref{eq:Aloc} and \eqref{eq:W:VT}, being $ \{\widehat\varphi_i\}_{i=1}^{N_{loc}} $ the basis in duality with the degrees of freedom.
  \end{proof}

  \begin{remark}
    Equation \eqref{eq:changepart2} mixes ingredients of very different nature: matrices $ A^{(T_k)} $ and $ B^{(T_k)} $ are local and depend on the bilinear form $ a $, see Eq. \eqref{eq:Aloc}. In contrast, the generalized Vandermonde matrix does not depend on $ a $, see Eq. \eqref{eq:VdMweights}, and is \emph{global} in the following sense.
    Let $V^{(T_k)}$ be the generalized Vandermonde matrix relating the two local bases
    $\{\psi^*(\widehat{\varphi_i})\}$ and $\{\psi^*(\widehat{\omega_i})\}$ on the element $T_k$. With reference to Eq. \eqref{eq:VdMweights}, one has
    $$
    V^{(T_k)}_{i,j} 
    = \int_{\psi_k^{-1}(s_i)} \psi_k^*(\widehat\omega_j)
    = \int_{s_i} \widehat\omega_j
    = V_{i,j}.
    $$
  \end{remark}

  Notice that, in general, the matrix $ M $ in \eqref{eq:changepart1} may be tough to be computed. In contrast, a recipe for $ V $ is neat; see Eq. \eqref{eq:VdMdofs}. 
  Once a set of local $\dof$'s is fixed, when the corresponding basis functions cannot be explicitly computed, relationship \eqref{eq:changepart2} can be used to compute the local matrices during assembly.
  Notice however that too naive Vandermonde matrices, such that associated with monomial basis and nodes on the real line, are inherently exponentially ill-conditioned as a function of the matrix order
  \cite{S-bumi}. It is plain to observe that this aspect is not secondary and should be taken into consideration.

\section{Spectral tools and useful matrix structures}\label{sec:spectral-tools}

When the number $n$ of elements in the mesh increases, 
the size of the linear system will also increase, together with its conditioning. 
We are concerned with the spectral analysis of the matrix structures and the design of appropriate preconditioners for Krylov methods, with the idea of obtaining robustness and optimality with respect to the relevant parameters.
Our tools come from block Toeplitz operators and block GLT matrix-sequences.
To this end, in Section \ref{ssez:matrix-seq} we introduce the notion of distribution in the Weyl sense and clustering, when a general matrix-sequence is considered. Section \ref{ssez:Toeplitz-etc} is devoted to block Toeplitz, circulant, sampling diagonal matrices, while Section \ref{ssez:GLT} treats the definition of the $*$-algebra of GLT sequences, with a special emphasis on the operative features.
\subsection{Clustering and distribution}\label{ssez:matrix-seq}
We begin with the notion of distribution in the Weyl sense, when a
matrix-valued symbol occurs, both in the sense of the eigenvalues and
singular values. The notion of clustering at a given fixed point, both in the sense of
eigenvalues and singular values, can be seen as a special case of the distribution notions.
\par \noindent
$\bullet$ \textbf{Sequences of Matrices and Block
  Matrix-Sequences.} A sequence of matrices is any sequence of the form $\{A_n\}_n$, where $A_n$ is a square
matrix of size $d_n$, $d_n$ is monotonically strictly increasing so that $d_n\to\infty$ as $n\to\infty$.
Let $r\ge1$ be a fixed positive integer independent of $n$. A $r$-block matrix-sequence - or simply a matrix-sequence if $r$ can
be inferred from the context or we do not need/want to specify it - is a special sequence of matrices $\{A_n\}_n$ in which the size of
$A_n$ is $d_n=r\phi_n$,  with $\{\phi_n\}_{n}$ being a strictly increasing sequence of positive integers.
In the following sections,
$A_n$ will be the stiffness matrix \eqref{eq:stiffness} obtained when partitioning the domain into $n$  elements.

\par \noindent
$\bullet$ \textbf{Singular Value and Eigenvalue Distribution of a
  Sequence of Matrices.} Let $\mu_t$ be the Lebesgue measure in
$\mathbb R^t$, $t\ge 1$. Throughout the current work, all the
terminology from measure theory (such as ``measurable set'',
``measurable function'', ``a.e.'', etc.) is referred to the
Lebesgue measure. A matrix-valued function $f:D\subseteq\mathbb
R^t\to\mathbb{C}^{r\times r}$ is said to be measurable (resp.,
continuous, Riemann-integrable, in $L^p(D)$, etc.) if its
components $f_{\alpha\beta}:D\to\mathbb{C},\
\alpha,\beta=1,\ldots,r$, are measurable (resp., continuous,
Riemann-integrable, in $L^p(D)$, etc.). We denote by
${C}_c(\mathbb R)$ (resp., ${ C}_c(\mathbb C)$) the space of
continuous complex-valued functions with bounded support defined
on $\mathbb R$ (resp., $\mathbb{C}$). If $A\in\mathbb{C}^{m\times
  m}$, the singular values and the eigenvalues of $A$ are denoted by
$\sigma_1(A),\ldots,\sigma_m(A)$ and
$\lambda_1(A),\ldots,\lambda_m(A)$, respectively.
\begin{definition}\label{def-distribution}
  Let $\{A_n\}_n$ be a sequence of matrices, with $A_n$ of size
  $d_n$, and let $f:D\subset\mathbb R^t\to\mathbb{C}^{r\times r}$ be
  a measurable function defined on a set $D$ with
  $0<\mu_t(D)<\infty$.
  \begin{itemize}
    \item We say that $\{A_n\}_n$ has a (asymptotic) singular value distribution described by $f$, and we write $\{A_n\}_n\sim_\sigma f$, if
    \begin{equation}\label{distribution:sv-sv}
      \lim_{n\to\infty}\frac1{d_n}\sum_{i=1}^{d_n}F(\sigma_i(A_n))=\frac1{\mu_t(D)}\int_D\frac{\sum_{i=1}^{r}F(\sigma_i(f(\mathbf x)))}{r}{\rm d}\mathbf x,\qquad\forall\,F\in C_c(\mathbb R).
    \end{equation}
    \item We say that $\{A_n\}_n$ has a (asymptotic) spectral (or eigenvalue) distribution described by $f$, and we write $\{A_n\}_n\sim_\lambda f$, if
    \begin{equation}\label{distribution:sv-eig}
      \lim_{n\to\infty}\frac1{d_n}\sum_{i=1}^{d_n}F(\lambda_i(A_n))=\frac1{\mu_t(D)}\int_D\frac{\sum_{i=1}^{r}F(\lambda_i(f(\mathbf x)))}{r}{\rm d}\mathbf x,\qquad\forall\,F\in C_c(\mathbb C).
    \end{equation}
  \end{itemize}
  If $\{A_n\}_n$ has both a singular value and an eigenvalue distribution described by $f$, we write $\{A_n\}_n\sim_{\sigma,\lambda}f$.
\end{definition}
Definition~\ref{def-distribution} is well-posed owing to the Lebesgue measurability of the functions
$\mathbf x\mapsto\sum_{i=1}^{r}F(\sigma_i(f(\mathbf x)))$ and $\mathbf x\mapsto\sum_{i=1}^{r}F(\lambda_i(f(\mathbf x)))$.
Whenever we write a relation such as $\{A_n\}_n\sim_\sigma f$ or
$\{A_n\}_n\sim_\lambda f$, it is understood that $f$ is as in
Definition~\ref{def-distribution}; that is, $f$ is a measurable
function defined on a subset $D$ of some $\mathbb R^t$ with
$0<\mu_t(D)<\infty$ and taking values in $\mathbb C^{r\times r}$
for some $r\ge1$.   The informal meaning behind the spectral
distribution \eqref{distribution:sv-eig} is the following: if $f$
is continuous, then a suitable ordering of the eigenvalues
$\{\lambda_j(A_n)\}_{j=1,\ldots,d_n}$, assigned in correspondence
with an equispaced grid on $D$, reconstructs approximately the $r$
surfaces  $\mathbf x$ $\mapsto\lambda_i(f(\mathbf x)),\
i=1,\ldots,r$. For instance, if $t=1$, $d_n=nr$, and $D=[a,b]$,
then the eigenvalues of $A_n$ are approximately equal to
$\lambda_i(f(a+j(b-a)/n))$, $j=1,\ldots,n,\ i=1,\ldots,r$; if
$t=2$, $d_n=n^2 r$, and $D=[a_1,b_1]\times [a_2,b_2]$, then the
eigenvalues of $A_n$ are approximately equal to
$\lambda_i(f(a_1+j_1(b_1-a_1)/n,a_2+j_2(b_2-a_2)/n)),\
j_1,j_2=1,\ldots,n,\ i=1,\ldots,r$ (and so on for $t\ge3$). This
type of information is useful in engineering applications
\cite{tom-paper}, e.g. for the computation of the relevant
vibrations, and in the analysis of the (asymptotic) convergence
speed of iterative solvers for large linear systems or for
improving the convergence rate by e.g. the design of appropriate
preconditioners \cite{BK,BeSe}.
\par
The next theorem gives useful tools for computing the spectral
distribution of sequences formed by Hermitian matrices. For the
related proof, we refer the reader to
\cite[Theorem~4.3]{opiccolo}. In what follows, the conjugate
transpose of the matrix $A$ is denoted by $A^*$. If $A\in\mathbb
C^{m\times m}$ and $1\le p\le\infty$, we denote by $\|A\|_p$ the
Schatten $p$-norm of $A$, i.e., the $p$-norm of the
vector $(\sigma_1(A),\ldots,\sigma_m(A))$. 
The Schatten $\infty$-norm $\|A\|_\infty$ is the largest singular value of $A$ and coincides
with the spectral norm $\|A\|$. The Schatten 1-norm $\|A\|_1$ is the sum of the singular values of $A$ and is
often referred to as the trace-norm of $A$. The Schatten 2-norm $\|A\|_2$ coincides with the Frobenius norm of $A$. Schatten $p$-norms are treated in detail in a beautiful book by Bhatia \cite{B}.
\begin{theorem}\label{extradimensional}
  Let $\{X_n\}_n$ be a sequence of matrices, with $X_n$ Hermitian of size $d_n$, and let $\{P_n\}_n$
  be a sequence such that $P_n\in\mathbb C^{d_n\times\delta_n}$, $P_n^*P_n=I_{\delta_n}$, $\delta_n\le d_n$
  and $\delta_n/d_n\to1$ as $n\to\infty$. Then, $\{X_n\}_n\sim_{\sigma,\lambda} f$ if and only if $\{P_n^*X_nP_n\}_n\sim_{\sigma,\lambda} f$.
\end{theorem}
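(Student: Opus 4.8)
The plan is to exploit the Hermitian structure of $X_n$ together with the Cauchy (Poincaré) interlacing inequalities, reducing the whole statement to a uniform comparison of empirical distribution functions. First I would note that $B_n:=P_n^*X_nP_n$ is again Hermitian, of size $\delta_n$; hence its singular values are the absolute values of its eigenvalues, exactly as for $X_n$. Consequently the singular-value part follows from the eigenvalue part by replacing any test function $F$ with $t\mapsto F(|t|)$, which is still continuous with compact support, so it is enough to compare the empirical eigenvalue distribution functions $G_n^X(t)=\tfrac1{d_n}\#\{i:\lambda_i(X_n)\le t\}$ and $G_n^B(t)=\tfrac1{\delta_n}\#\{k:\lambda_k(B_n)\le t\}$. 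A test function $F\in C_c(\mathbb{C})$ restricts to an element of $C_c(\mathbb{R})$ on the real axis, where all these eigenvalues live, so in all cases the relevant quantities are $\tfrac1{d_n}\sum_i F(\lambda_i(X_n))=\int F\,dG_n^X$ and $\tfrac1{\delta_n}\sum_k F(\lambda_k(B_n))=\int F\,dG_n^B$ (Riemann--Stieltjes integrals over $\mathbb{R}$).

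The key algebraic input is the Poincaré separation theorem: since $P_n^*P_n=I_{\delta_n}$, ordering eigenvalues increasingly one has $\lambda_k(X_n)\le\lambda_k(B_n)\le\lambda_{k+d_n-\delta_n}(X_n)$ for $k=1,\dots,\delta_n$ (equivalently, $B_n$ is a leading principal submatrix of $Q_n^*X_nQ_n$ for any unitary completion $Q_n$ of $P_n$, and Cauchy interlacing applies). Counting how many of these eigenvalues fall below an arbitrary threshold $t$ converts this pair of inequalities into the uniform estimate $\sup_{t\in\mathbb{R}}|G_n^B(t)-G_n^X(t)|\le (d_n-\delta_n)/\delta_n$; this is where the hypothesis $\delta_n/d_n\to1$ is used, precisely to make the right-hand side $\to0$. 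The picture behind it is that passing from $X_n$ to $B_n$ disturbs at most $d_n-\delta_n=o(d_n)$ eigenvalues, negligible for a Cesàro average.

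Granting that $\int F\,d(G_n^X-G_n^B)\to0$ for every $F\in C_c(\mathbb{R})$, the equivalence is then immediate: the two averages above differ by an infinitesimal quantity, and they converge to the \emph{same} limit $\tfrac1{\mu_t(D)}\int_D\tfrac1r\sum_iF(\lambda_i(f(\mathbf x)))\,d\mathbf x$ (respectively with $\sigma_i$ in place of $\lambda_i$), so $\{X_n\}_n\sim_\lambda f$ holds iff $\{B_n\}_n\sim_\lambda f$ holds, and likewise for $\sim_\sigma$; combining the two gives the claimed equivalence for $\sim_{\sigma,\lambda}$.

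The one step requiring genuine care — the main obstacle — is deducing $\int F\,d(G_n^X-G_n^B)\to0$ from uniform convergence $G_n^X-G_n^B\to0$ when $F$ is only continuous with compact support, not of bounded variation, so that Riemann--Stieltjes integration by parts is not directly available. The clean fix is approximation: given $\varepsilon>0$, choose a piecewise-linear (hence bounded-variation, compactly supported) $F_\varepsilon$ with $\|F-F_\varepsilon\|_\infty<\varepsilon$; since $G_n^X$ and $G_n^B$ are probability distribution functions the replacement costs at most $2\varepsilon$, while for $F_\varepsilon$ integration by parts yields $|\int F_\varepsilon\,d(G_n^X-G_n^B)|\le\|G_n^X-G_n^B\|_\infty\cdot\mathrm{TV}(F_\varepsilon)\to0$, and letting $\varepsilon\to0$ closes the estimate. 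The remaining bookkeeping — verifying the index ranges in the interlacing count and that the unitary completion of $P_n$ indeed exhibits $B_n$ as a principal submatrix — is routine.
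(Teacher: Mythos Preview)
Your argument is correct. The paper does not actually prove this theorem: immediately before the statement it writes ``For the related proof, we refer the reader to \cite[Theorem~4.3]{opiccolo}'', so there is nothing to compare your approach against beyond noting that you supply a self-contained proof where the paper outsources to a reference.

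On the mathematics itself: the Poincar\'e separation inequalities $\lambda_k(X_n)\le\lambda_k(B_n)\le\lambda_{k+d_n-\delta_n}(X_n)$ are exactly the right tool, your count $|N_X(t)-N_B(t)|\le d_n-\delta_n$ is correct, and the uniform bound on $|G_n^X-G_n^B|$ follows (in fact one gets the slightly sharper $(d_n-\delta_n)/d_n$, but your $(d_n-\delta_n)/\delta_n$ is of course also $o(1)$). The reduction of the singular-value statement to the eigenvalue statement via $t\mapsto F(|t|)$ is valid because both $X_n$ and $B_n$ are Hermitian. Your handling of the ``main obstacle'' is also sound: the BV approximation plus Riemann--Stieltjes integration by parts is a clean way to pass from $\|G_n^X-G_n^B\|_\infty\to0$ to $\int F\,d(G_n^X-G_n^B)\to0$ for merely continuous compactly supported $F$; the boundary terms vanish because $F_\varepsilon$ has compact support and $G_n^X-G_n^B$ is bounded. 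The only cosmetic remark is that the step labelled ``routine bookkeeping'' (completing $P_n$ to a unitary $Q_n$ so that $B_n$ appears as a principal submatrix of $Q_n^*X_nQ_n$) is indeed routine and could be stated in one line rather than deferred.
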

\par
Now we turn to the definition of clustering. For $z\in\mathbb C$ and $\epsilon>0$, let $B(z,\epsilon)$
the disk with center $z$ and radius $\epsilon$, $B(z,\epsilon)\doteq\{w\in\mathbb C:\,|w-z|<\epsilon\}$.
For $S\subseteq\mathbb C$ and $\epsilon>0$, we denote by $B(S,\epsilon)$ the $\epsilon$-expansion of $S$,
defined as $B(S,\epsilon)\doteq\bigcup_{z\in S}B(z,\epsilon)$.
\begin{definition}\label{def-cluster}
  Let $\{X_n\}_n$ be a sequence of matrices, with $X_n$ of size $d_n$ tending to infinity, and
  let $S\subseteq\mathbb C$ be a nonempty closed subset of $\mathbb C$. $\{X_n\}_n$ is {\em strongly clustered} at $S$
  in the sense of the eigenvalues if, for each $\epsilon>0$, the number of eigenvalues of $X_n$ outside $B(S,\epsilon)$
  is bounded by a constant $q_\epsilon$ independent of $n$. In symbols,
  $$q_\epsilon(n,S)\doteq\#\{j\in\{1,\ldots,d_n\}: \lambda_j(X_n)\notin B(S,\epsilon)\}=O(1),\quad\mbox{as $n\to\infty$.}$$
  $\{X_n\}_n$ is {\em weakly clustered} at $S$ if, for each $\epsilon>0$,
  $$q_\epsilon(n,S)=o(d_n), \quad \mbox{as $n\to\infty.$}$$
  If $\{X_n\}_n$ is strongly or weakly clustered at $S$ and $S$ is not connected, then the connected components of $S$ are called sub-clusters.
\end{definition}
\begin{remark}\label{rem:clustering vs distribution}
  Recall that, for a measurable function $g:D\subseteq\mathbb
  R^t\to\mathbb C$, the essential range of $g$ is defined as
  $\mathcal{ER}(g)\doteq\{z\in\mathbb C:\,\mu_t(\{g\in
  B(z,\epsilon)\})>0\mbox{ for all $\epsilon>0$}\}$, where $\{g\in
  B(z,\epsilon)\}\doteq\{x\in D:\,g(x)\in B(z,\epsilon)\}$.
  $\mathcal{ER}(g)$ is always closed; moreover, if $g$ is continuous
  and $D$ is contained in the closure of its interior, then
  $\mathcal{ER}(g)$ coincides with the closure of the image of $g$.
  \par
  Hence, if $\{X_n\}_n\sim_\lambda f$ (with $\{X_n\}_n,\  f$ as in Definition \ref{def-distribution}), then,
  by \cite[Theorem 4.2]{gol-serra}, $\{X_n\}_n$ is weakly clustered at the essential range of $f$, defined
  as the union of the essential ranges of the eigenvalue functions $\lambda_i(f),\ i=1,\ldots, r$: $\mathcal{ER}(f)\doteq\bigcup_{i=1}^r\mathcal{ER}(\lambda_i(f))$. Finally, if $\mathcal{ER}(f)=s$ with $s$ fixed complex number and $\{X_n\}_n\sim_\lambda f$, then $\{X_n\}_n$ is weakly clustered at $s$.
  All the considerations above can be translated in the singular value setting as well, with obvious minimal modifications.
 For further relationships among distribution, clustering, Schatten $p$-norms the reader is referred to \cite{taud2} (see also \cite{ty-1}).
\end{remark}
\subsection{Block Toeplitz/Circulant/diagonal matrices and zero-distributed matrix-sequences}\label{ssez:Toeplitz-etc}
In this subsection we introduce three types of matrix  structures. The first two have an algebraic definition for every fixed
dimension (block Toeplitz/Circulant matrices and block diagonal structures), while the last has only an asymptotic sense (zero-distributed
matrix-sequences). As it will be clear in the sequel, starting from Subsection \ref{ssez:GLT}, we deal with matrix-sequences  consisting of these three matrix structures, especially when defining the basics of the theory of block GLT sequences.
\par \noindent
$\bullet$ \textbf{Block Toeplitz/Circulant Matrices.}
We concisely summarize the definition and relevant properties of block Toeplitz matrices.\\
Given $r,n$ fixed positive integers i.e. given $r,{n}\in \mathbb{N}^+$, a matrix of the form
\[
[A_{{i}-{j}}]_{{i},{j}=1}^{{n}} \in \mathbb{C}^{{n}r \times n r}
\]
with blocks $A_{k} \in \mathbb{C}^{r\times r}$, $k =-({n}-1), \ldots, {n}-1$, is called block Toeplitz matrix, or, more precisely, $r$-block Toeplitz matrix. Given a matrix-valued function $f : [-\pi, \pi] \rightarrow \mathbb{C}^{r\times r}$ belonging to $L^1([-\pi, \pi])$, we denote its Fourier coefficients by
\begin{equation}\label{eq:fourier_coefficients}
  \hat{f}_{k} =\frac{1}{2\pi} \int_{[-\pi,\pi]} f({\theta})e^{-\hat i\, k\theta}\, d\theta \in \mathbb{C}^{r\times r}, \ \ \ {k}
  \in \mathbb{Z}.
\end{equation}
For every ${n} \in \mathbb{N}^+$, the ${n}$-th Toeplitz matrix associated with $f$ is defined as
\begin{equation}\label{eq:toeplitz}
  T_{n}(f) \doteq [\hat{f}_{{i}-{j}}]_{{i},{j}=1}^{{n}}
\end{equation}
or, equivalently, as
\begin{equation}\label{eq:toeplitz_kron}
  T_{n}(f) = \sum_{|j|<n}  J_{n}^{(j)} \otimes \hat{f}_{j}
\end{equation}
where $\otimes$ denotes the (Kronecker) tensor product of matrices, while $J_m^{(l)}$ is the matrix of order $m$ whose $(i,j)$
entry equals $1$ if $i-j=l$ and zero otherwise.
We call $\{T_{n}(f)\}_{{n}\in \mathbb{N}^+}$ the family of (block) Toeplitz matrices associated with $f$, which, in turn, is called the generating function of $\{T_{n}(f)\}_{{n}\in \mathbb{N}^+}$.
In perfect analogy we define block Circulant matrices. Given $r,{n}\in \mathbb{N}^+$, a matrix of the form
\[
[A_{({i}-{j})\ {\rm mod\ }{n} }]_{{i},{j}=1}^{{n}} \in \mathbb{C}^{{n}r \times {n}r}
\]
with blocks $A_{k} \in \mathbb{C}^{r\times r}$, ${k} = {0}, \ldots, {n}-1$, is called block Circulant matrix, or, more precisely, $r$-block Circulant matrix.
\par
The ${n}$-th Circulant matrix associated with $f$ is defined as
\begin{equation}\label{eq:circ}
  C_{n}(f) \doteq \sum_{|j|<n}  Z_{n}^{(j)} \otimes \hat{f}_{j}
\end{equation}
where  $Z_m^{(l)}=[Z_m^{(1)}]^l$ is the matrix of order $m$ whose $(i,j)$ entry equals $1$ if $(i-j) \ {\rm mod\ } m=l$ and zero otherwise.
\par \noindent
$\bullet$ \textbf{Block Diagonal Sampling Matrices.} For
$n\in\mathbb N^+$, $d=1$, and $a:[0,1]\to\mathbb C^{r\times r}$, we
define the block diagonal sampling matrix $D_n(a)$ as the block diagonal
matrix
\[
D_n(a) \doteq \mathop{\rm diag}_{i=1,\ldots,n}a\Bigl(\frac{i}{n}\Bigr)=\left[\begin{array}{cccc}a(\frac1n) & & & \\ & a(\frac2n) & & \\ & & \ddots & \\
  & & & a(1)\end{array}\right]\in\mathbb C^{rn\times rn}.
\]
\par \noindent $ \bullet $ \textbf{Zero-Distributed Sequences.}
A sequence of matrices $\{Z_n\}_n$ such that $\{Z_n\}_n\sim_\sigma0$ is referred to as a zero-distributed sequence.
Note that, for any $r\ge1$, $\{Z_n\}_n\sim_\sigma0$ is equivalent to $\{Z_n\}_n\sim_\sigma O_r$ (throughout this paper, $O_m$ and $I_m$
denote the $m\times m$ zero matrix and the $m\times m$ identity matrix, respectively). Theorem~\ref{0cs} provides an important
characterization of zero-distributed sequences together with a useful sufficient condition for detecting such sequences \cite{taud2}. Throughout this paper we use the natural convention $1/\infty=0$.
\begin{theorem}\label{0cs}
  Let $\{Z_n\}_n$ be a sequence of matrices, with $Z_n$ of size $d_n$, and let $\|\cdot\|$ be the spectral norm. Then
  \begin{itemize}
    \item $\{Z_n\}_n$ is zero-distributed if and only if $Z_n=R_n+N_n$ with ${\rm rank}(R_n)/d_n\to0$ and $\|N_n\|\to0$ as $n\to\infty$.
    \item $\{Z_n\}_n$ is zero-distributed if there exists a $p\in[1,\infty]$ such that $\|Z_n\|_p/(d_n)^{1/p}\to0$ as $n\to\infty$.
  \end{itemize}
\end{theorem}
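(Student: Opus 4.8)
The plan is to reduce both statements to a single concrete fact about the bulk of the singular values of $Z_n$: that $\{Z_n\}_n\sim_\sigma 0$ is equivalent to the \emph{weak clustering at the origin} in the singular value sense, i.e.\ that $\#\{i\le d_n:\sigma_i(Z_n)>\epsilon\}/d_n\to 0$ for every $\epsilon>0$. One direction comes for free: by the singular value translation of the first part of Remark~\ref{rem:clustering vs distribution}, since the essential range of the zero symbol is $\{0\}$, the hypothesis $\{Z_n\}_n\sim_\sigma 0$ forces weak clustering at $\{0\}$. For the reverse direction I would argue with test functions: given $F\in C_c(\mathbb R)$, split the Cesàro mean $\frac1{d_n}\sum_i F(\sigma_i(Z_n))$ according to whether $\sigma_i(Z_n)$ lies below or above a small threshold $\epsilon$; uniform continuity of $F$ controls the first group by $\sup_{[0,\epsilon]}|F-F(0)|$, while the second group is bounded by $2\|F\|_\infty$ times its vanishing relative cardinality, so letting $\epsilon\to 0$ yields $\{Z_n\}_n\sim_\sigma 0$. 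Once this equivalence is available the rest is bookkeeping.

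For the sufficiency in the first item, assume $Z_n=R_n+N_n$ with $\rho_n\doteq\operatorname{rank}(R_n)=o(d_n)$ and $\|N_n\|\to 0$. The one tool needed is the classical Weyl-type inequality for singular values, $\sigma_{i+j-1}(A+B)\le\sigma_i(A)+\sigma_j(B)$ (see e.g.\ Bhatia \cite{B}): taking $i=\rho_n+1$, so that $\sigma_i(R_n)=0$, gives $\sigma_k(Z_n)\le\|N_n\|$ for every $k>\rho_n$. Hence, once $\|N_n\|<\epsilon$, at most $\rho_n$ singular values of $Z_n$ exceed $\epsilon$; this is precisely the weak clustering property, so $\{Z_n\}_n\sim_\sigma 0$. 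For the necessity, starting from $\{Z_n\}_n\sim_\sigma 0$ (hence from weak clustering), I would extract diagonally a sequence $\epsilon_n\downarrow 0$ for which $\#\{i:\sigma_i(Z_n)>\epsilon_n\}/d_n$ still tends to $0$ — for each $m$ let $M_m$ be such that this relative count at level $1/m$ is $\le 1/m$ for all $n\ge M_m$, take the $M_m$ strictly increasing, and set $\epsilon_n\doteq 1/m$ on $M_m\le n<M_{m+1}$ — and then truncate a singular value decomposition of $Z_n$ at level $\epsilon_n$: the part keeping the singular values above $\epsilon_n$ is $R_n$, whose rank is $o(d_n)$ by the choice of $\epsilon_n$, and the complementary part is $N_n$ with $\|N_n\|\le\epsilon_n\to 0$.

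For the second item, the case $p=\infty$ is immediate: $\|Z_n\|_\infty=\|Z_n\|$ and $(d_n)^{1/\infty}=1$, so $\|Z_n\|\to 0$ and one may take $R_n=O_{d_n}$, $N_n=Z_n$ in the first item. For $p\in[1,\infty)$ I would use a Markov/Chebyshev estimate: since $\tau^p\,\#\{i:\sigma_i(Z_n)>\tau\}\le\sum_i\sigma_i(Z_n)^p=\|Z_n\|_p^p$ for any $\tau>0$, choosing the threshold $\tau_n\doteq\bigl(\|Z_n\|_p/(d_n)^{1/p}\bigr)^{1/2}\to 0$ makes $\#\{i:\sigma_i(Z_n)>\tau_n\}/d_n$ bounded by $\bigl(\|Z_n\|_p/(d_n)^{1/p}\bigr)^{p/2}\to 0$; truncating the singular value decomposition of $Z_n$ at level $\tau_n$ as above yields a splitting to which the first item applies.

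I expect the only genuinely delicate point to be the necessity direction of the first item: concretely, making sure that the equivalence with weak clustering at $\{0\}$ is legitimate even though the singular values of $Z_n$ need not be uniformly bounded — which is why I would lean on Remark~\ref{rem:clustering vs distribution}/\cite{gol-serra} for that implication rather than reproving it by hand — together with the diagonal choice of $\epsilon_n\to 0$ that keeps the relative number of ``large'' singular values infinitesimal. The Weyl inequality, the SVD truncation and the Markov estimate are all routine.
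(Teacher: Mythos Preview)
Your argument is correct in all its parts: the equivalence between $\{Z_n\}_n\sim_\sigma 0$ and weak singular-value clustering at $0$ via Remark~\ref{rem:clustering vs distribution} and the test-function splitting; the Weyl inequality for the sufficiency in the first item; the diagonal extraction of $\epsilon_n\downarrow 0$ followed by SVD truncation for the necessity; and the Markov-type count for the second item, with the $p=\infty$ case handled separately. The only cosmetic point is that in the second item you could bypass the explicit construction of $R_n,N_n$ and appeal directly to the weak-clustering characterization you already established, but what you wrote is fine.

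As for comparison with the paper: there is nothing to compare. The paper does not prove Theorem~\ref{0cs}; it merely states it and defers to \cite{taud2} for the proof. Your route---reduce to weak clustering at the origin, then use Weyl/SVD truncation/Markov---is exactly the standard one found in the cited literature, so your proposal is in line with what that reference does.
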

\subsection{The $*$-algebra of block GLT sequences}\label{ssez:GLT}
Let $r\ge1$ be a fixed positive integer. A $r$-block GLT sequence (or simply a GLT sequence if $r$ can be inferred from the context
or we do not need/want to specify it) is a special $r$-block matrix-sequence $\{A_n\}_n$ equipped with a measurable function
$\kappa:[0,1]\times[-\pi,\pi]\to\mathbb C^{r\times r}$, the so-called symbol or GLT symbol. We use the notation $\{A_n\}_n\sim_{\rm
  GLT}\kappa$ to indicate that $\{A_n\}_n$ is a GLT sequence with symbol $\kappa$. The symbol of a GLT sequence is unique in the
sense that if $\{A_n\}_n\sim_{\rm GLT}\kappa$ and $\{A_n\}_n\sim_{\rm GLT}\varsigma$ then $\kappa=\varsigma$ a.e. in the definition domain $[0,1]\times[-\pi,\pi]$.

The main properties of $r$-block GLT sequences proved in \cite{etnaGLTbookIII} are listed below: they represent a complete characterization of GLT sequences, equivalent to the full constructive definition given originally in \cite{glt-1} for $r=1$ (see also the seminal paper by Tilli \cite{Tilli-loc} for a previous less general construction).

If $A$ is a matrix, we denote by $A^\dag$ the Moore--Penrose pseudoinverse of $A$ (recall that $A^\dag=A^{-1}$
whenever $A$ is invertible). If $f_m,f:D\subseteq\mathbb R^t\to\mathbb C^{r\times r}$ are measurable matrix-valued
functions, we say that $f_m$ converges to $f$ in measure (resp., a.e., in $L^p(D)$, etc.) if $(f_m)_{\alpha\beta}$ converges to
$f_{\alpha\beta}$ in measure (resp., a.e., in $L^p(D)$, etc.) for all $\alpha,\beta=1,\ldots,r$.
\begin{enumerate}
  \item[\textbf{GLT\,1.}] If $\{A_n\}_n\sim_{\rm GLT}\kappa$ then $\{A_n\}_n\sim_\sigma\kappa$ in the sense of Definition \ref{def-distribution} with $t=2$. If moreover each $A_n$ is Hermitian then $\{A_n\}_n\sim_\lambda\kappa$, again in the sense of Definition \ref{def-distribution} with $t=2$.
  \item[\textbf{GLT\,2.}] We have:
  \begin{itemize}
    \item $\{T_{n}(f)\}_{n}\sim_{\rm GLT}\kappa({x},{\theta})=f({\theta})$ if $f:[-\pi,\pi]\to\mathbb C^{r\times r}$ is in $L^1([-\pi,\pi])$;
    \item $\{D_{n}(a)\}_{n}\sim_{\rm GLT}\kappa({x},{\theta})=a({x})$ if $a:[0,1]\to\mathbb C^{r\times r}$ is Riemann-integrable;
    \item $\{Z_n\}_n\sim_{\rm GLT}\kappa({x},{\theta})=O_r$ if and only if $\{Z_n\}_n\sim_\sigma 0$.
  \end{itemize}
  \item[\textbf{GLT\,3.}] If $\{A_n\}_n\sim_{\rm GLT}\kappa$ and $\{B_n\}_n\sim_{\rm GLT}\varsigma$ then:
  \begin{itemize}
    \item $\{A_n^*\}_n\sim_{\rm GLT}\kappa^*$;
    \item $\{\alpha A_n+\beta B_n\}_n\sim_{\rm GLT}\alpha\kappa+\beta\varsigma$ for all $\alpha,\beta\in\mathbb C$;
    \item $\{A_nB_n\}_n\sim_{\rm GLT}\kappa\varsigma$;
    \item $\{A_n^\dag\}_n\sim_{\rm GLT}\kappa^{-1}$ provided that $\kappa$ is invertible a.e.
  \end{itemize}
  \item[\textbf{GLT\,4.}] $\{A_n\}_n\sim_{\rm GLT}\kappa$ if and only if there exist $r$-block GLT sequences $\{B_{n,m}\}_n\sim_{\rm GLT}\kappa_m$ such that $\{B_{n,m}\}_n\stackrel{\rm a.c.s.}{\longrightarrow}\{A_n\}_n$ and $\kappa_m\to\kappa$ in measure, where the a.c.s. convergence is studied in \cite{GLTbookI}.
\end{enumerate}

\section{Spectral symbol and spectral theory} \label{sec:spectral}
    The present section is devoted to the spectral and structure analysis for fixed dimension and for the related sequences of matrices as the fineness parameter tends to zero.
    To begin with, 
    we explicitly construct the symbol, following the assembly procedure of the stiffness matrix, which is depicted in Figure~\ref{fig:Ablocks}. 
    In our setting weigths dof's are just evaluation at points, so let $\{\xi_i\}_{i=1}^{r+1}$ be a set of reference positions in $\Ihat=[0,1]$.
    We recall that the presence of both extrema of the interval is needed for defining continuous elements, so we assume that $\xi_1=0$ and $\xi_{r+1}=1$.

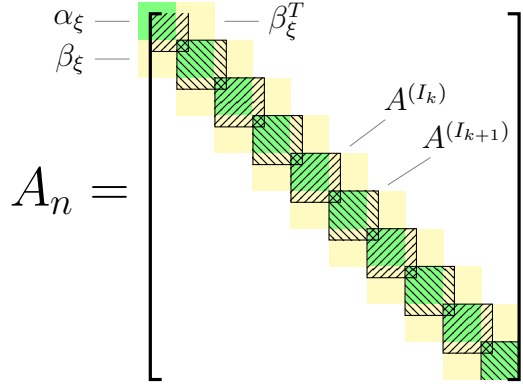
\begin{figure}[h]
\begin{center}
\begin{tikzpicture}[scale=0.5]
  [pin distance=10mm,
   pin edge={stealth-,shorten <=-4pt,decorate,decoration=bent}]
  \foreach \x in {0,1,2,3,4,5,6,7,8,9}{
    \fill[green!50]  (\x,-\x) rectangle +(1,-1);
  }
  \foreach \x in {0,1,2,3,4,5,6,7,8}{
    \fill[yellow!30] (\x+1,-\x) rectangle +(1,-1);
    \fill[yellow!30] (\x,-\x-1) rectangle +(1,-1);
  }

  \begin{scope}
    \clip (0.3,-0.3) rectangle (10,-10);
    \foreach \x in {0,2,4,6,8}{
     \filldraw[draw=black,pattern=north east lines]
         (\x,-\x)  rectangle +(1.3,-1.3);
     \filldraw[draw=black,pattern=north west lines]
         (\x+1,-\x-1)  rectangle +(1.3,-1.3);
    }
  \end{scope}
  \node[pin=above right:{\large $A^{(I_k)}$}] at (5.3,-4) {};
  \node[pin=above right:{\large $A^{(I_{k+1})}$}] at (6.3,-5) {};

  \node[pin=left:{\large $\alpha_\xi$}] at (0,-0.5) {};
  \node[pin=left:{\large $\beta_\xi$}] at (0,-1.5) {};
  \node[pin=right:{\large $\beta_\xi^T$}] at (2,-0.5) {};

  \draw[very thick,xshift=0.3cm] (0.3,-0.3) -- (0.,-0.3) -- (0.,-10) -- (0.3,-10);
  \draw[very thick,xshift=10cm]  (-0.3,-0.3) -- (0.,-0.3) -- (0.,-10) -- (-0.3,-10);
  \node[anchor=east] at (0,-5) {\huge $A_n=$};
\end{tikzpicture}
\end{center}
    \caption{Structure of the stiffness matrix and of the Toeplitz matrix.
    Element contributions are hatched in black: note the overlap of nearby blocks and the smaller first and last block. The blocks of the Toeplitx matrix are shaded in colour.}
    \label{fig:Ablocks}
\end{figure}

    Let then $A^{(I_k)}$ be the $(r+1)\times(r+1)$ local contribution of element $I_k$ to the global stiffness matrix, defined as in \eqref{eq:Aloc}. In the simple case of unit diffusion coefficient ($b(x)=1$) and uniform mesh size, then $A^{(I_k)}=A^{\loc}$ is independent of $k$ and the stiffness matrix has the general structure of a block-tridiagonal Toeplitz matrix with $r\times r$ blocks (see Figure~\ref{fig:Ablocks}).
    We are thus able to write the spectral symbol
    $f_{r}^{\xi}: [-\pi, \pi]  \to \mathbb{C}^{r \times r}$
    of the stiffness matrix as a function depending on the $ \xi_i $'s.

    Let us consider a uniform grid with $n$ elements and let $A_n$ be the corresponding stiffness matrix \eqref{eq:stiffness}. 
    Consider the set of global indices $\mathcal{I}_k\doteq\{\LTG(i,k):i=0,\ldots,r\}$ associated with a generic $k$-th element (with $k\neq1,n$ since the first and last elements are influenced by the boundary conditions). 
    Since the contribution of the shape function associated with $\xi_{r+1}=1$ of each element will overlap in $A_n$ with those of of the shape function associated with $\xi_1=0$ of the next element, the submatrix of $A_n$ for the rows and columns in $\mathcal{I}_k$ is
    $$ (A_\xi)_{i,j} \doteq
    \begin{cases}
      A_{1, 1}^{\loc} + A_{r+1, r+1}^{\loc} &\text{ if } (i,j)=(1,1) \text{ or } (r+1,r+1) , \\
      A_{i,j}^{\loc} &\text{ otherwise }
    \end{cases}
    $$
    The structure of the $(r+1)\times (r+1)$ matrix $A_\xi$ is
    $$ A_{\xi} =\left[
    \begin{array}{c|c}
      \alpha_\xi & \widetilde{\beta}_\xi^T \\ \hline
      \widetilde{\beta}_\xi & b
    \end{array}\right],$$
    where $ \alpha_\xi \in \mathbb{R}^{r \times r}$, $ \widetilde{\beta}_\xi \in \mathbb{R}^{1 \times r}$, $ b \in \mathbb{R} $ and
    $b=(\alpha_\xi)_{1,1}$. Following \cite{Rahla}, define 
    \begin{equation*}
      \beta_\xi \doteq \left[ \begin{array}{c}
        \widetilde{\beta}_\xi \\ \hline
        \boldsymbol{0}
      \end{array}
      \right] ,
    \end{equation*}
    with $ \boldsymbol{0} $ being the zero matrix of size $(r-1)\times r$, so that $ \beta_\xi \in \mathbb{R}^{r \times r}$.

    Recalling (\ref{eq:fourier_coefficients})-(\ref{eq:toeplitz_kron}),
    we observe that the stiffness matrix $A_n$ is a principal submatrix of size $nr-1$ of the Toeplitz matrix $T_n(f_{r}^{\xi})$ with generating function
    \begin{align} \label{eq:symboldegr}
      f_{r}^{\xi}: \quad [-\pi, \pi] & \to \mathbb{C}^{r \times r},
      \\
      \theta & \mapsto \alpha_\xi + \beta_\xi e^{i \theta} + \beta_\xi^T e^{-i \theta}. \nonumber
    \end{align}

    Furthermore, according to the first item of axiom {\bf GLT2} and to axiom {\bf GLT1}, we deduce that $f_{r}^{\xi}$ is also the spectral and the singular value symbol of $\{T_{n}(f_{r}^{\xi})\}_{{n}}$, in the sense of Definition \ref{def-distribution}. As a first conclusion we infer that $f_{r}^{\xi}$ is the spectral and the singular value symbol of the stiffness matrix-sequence $\{A_{n}\}_{{n}}$. This is proven by applying Theorem \ref{extradimensional} to $X_n=T_{n}(f_{r}^{\xi})$, $A_n=P_n^*X_nP_n$, $d_n=nr$, $\delta_n=nr-1$, so that $\delta_n/d_n\to1$ as $n\to\infty$, and $P_n$ rectangular matrix obtained by the identity of size $d_n$ by eliminating only the last column.

    As a consequence, the union of the ranges of the $r$ eigenvalues $\lambda_1(f_{r}^{\xi})\le \lambda_2(f_{r}^{\xi}) \le \cdots \le \lambda_r(f_{r}^{\xi})$
    is a weak cluster of the eigenvalues of both $\{T_{n}(f_{r}^{\xi})\}_{{n}}$ and $\{A_{n}\}_{{n}}$, in accordance with Definition \ref{def-cluster} and the subsequent Remark \ref{rem:clustering vs distribution}.

\subsection{Determinant of the symbol}
The aim of this section is to provide an explicit formula for the determinant of the symbol \eqref{eq:symboldegr}.
We start recalling that in the case of the standard Lagrangian elements (i.e. $\xi_i=(i-1)/r$ for $i=1,\ldots,r+1$)
$$ \det f_r(\theta) = d_r (2 - 2 \cos(\theta)) ,$$
with
\begin{equation}\label{eq:dr}
   d_r \doteq \det \left[ < \ell_i', \ell_j' > \right]_{i,j=2}^r ,
\end{equation}
where $ \{\ell_i\}_{i=1}^{r+1} $ is the Lagrangian basis associated with \emph{equispaced} nodes in $[0,1]$.
For the proof, see \cite[Theorem 8]{GSCS}, two ingredients are employed. The first is basic and concerns the properties of the determinant, when expanding its computation along rows and columns. The second ingredient is the relation
$$ \det (f_r (0) ) = d_r + 2d_r' + d_r'' = 0 \quad \text{and} \quad d_r + d_r' = 0, $$
with $ d_r' $ and $ d_r'' $ defined as in \cite[Eq. (45)]{GSCS}, which can be easily proven once the following is established
$$ \sum_{j=1}^{r+1} \ell_i' (x) = 0.$$
In order to extend the result on $\det f_r$ to the case of basis induced by weights and hence to $\det f_r^{\xi}$, we need to generalise the previous result.

\begin{lemma} \label{lem:differentiallagrange}
  Let $ \{ \varphi_i \}_{i=1}^{r+1} $ the basis for $ \mathbb{P}_r $ induced by weights corresponding to evaluations on $ \{ \xi_i \}_{i=1}^{r+1} $. Then
  $$ \sum_{i=1}^{r+1} \varphi_i' (x) = 0 .$$
  Moreover, 
  any choice of $ r $ polynomials out of $ \{ \varphi_i' \}_{i=1}^{r+1}  $ is a basis for $ \mathbb{P}_{r-1} $.
\end{lemma}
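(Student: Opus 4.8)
The plan is to prove the two claims in sequence, the first being a direct consequence of partition-of-unity-type reasoning and the second following from it together with a dimension count. For the first claim, I would start from the defining property of the weight dof's: each $\varphi_i$ is the unique element of $\mathbb{P}_r$ with $\varphi_i(\xi_j)=\delta_{ij}$ (since here the weights are evaluations at the points $\xi_j$, the generalized Vandermonde matrix \eqref{eq:VdMweights} collapses to the ordinary one and the dual basis is the Lagrange basis at the nodes $\{\xi_i\}$). Hence $\sum_{i=1}^{r+1}\varphi_i$ is a polynomial in $\mathbb{P}_r$ taking the value $1$ at each of the $r+1$ distinct points $\xi_1,\ldots,\xi_{r+1}$; by uniqueness of polynomial interpolation it equals the constant $1$ identically. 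Differentiating gives $\sum_{i=1}^{r+1}\varphi_i'(x)=0$ for all $x$, which is the first assertion.

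For the second claim, fix any index $m\in\{1,\ldots,r+1\}$ and consider the set $\{\varphi_i'\}_{i\neq m}$, which has $r$ elements, all lying in $\mathbb{P}_{r-1}$ (differentiation lowers degree by one). Since $\dim\mathbb{P}_{r-1}=r$, it suffices to show these $r$ derivatives are linearly independent. Suppose $\sum_{i\neq m}c_i\varphi_i'=0$. Using the first part, $\varphi_m'=-\sum_{i\neq m}\varphi_i'$, so the relation $\sum_{i\neq m}c_i\varphi_i'=0$ can be rewritten, after adding $\bigl(\sum_{i\neq m}c_i\bigr)\cdot 0 = \bigl(\sum_{i\neq m}c_i\bigr)\sum_{j=1}^{r+1}\varphi_j' $, as a relation $\sum_{j=1}^{r+1}\widetilde c_j\varphi_j'=0$ among all $r+1$ derivatives with $\widetilde c_m = \sum_{i\neq m}c_i$ and $\widetilde c_i = c_i + \sum_{\ell\neq m}c_\ell$ for $i\ne m$; more simply, any linear dependence among $\{\varphi_i'\}_{i\neq m}$ integrates to $\sum_{i\neq m}c_i\varphi_i = \text{const}$, and evaluating at the nodes $\xi_j$ forces the constant to equal $c_i$ at $\xi_i$ for $i\ne m$ and to equal $0$ at $\xi_m$, whence all $c_i=0$. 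Thus $\{\varphi_i'\}_{i\neq m}$ is a linearly independent set of $r$ vectors in the $r$-dimensional space $\mathbb{P}_{r-1}$, hence a basis.

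The only mild subtlety — and the step I would be most careful with — is the integration argument in the second part: from $\sum_{i\neq m}c_i\varphi_i'=0$ one concludes $\sum_{i\neq m}c_i\varphi_i$ is a constant $\gamma$, and then the node values give $c_j = \gamma$ for each $j\neq m$ while $0 = \gamma$ at $j=m$, forcing $\gamma=0$ and then every $c_j=0$. This uses crucially that $\xi_m$ is among the interpolation nodes and that the remaining $\varphi_i$ vanish there except — wait, they do \emph{not} all vanish at $\xi_m$; rather $\varphi_i(\xi_m)=\delta_{im}=0$ for $i\neq m$, so indeed $\sum_{i\neq m}c_i\varphi_i(\xi_m)=0=\gamma$, and at $\xi_j$ for $j\neq m$ we get $\sum_{i\neq m}c_i\varphi_i(\xi_j)=c_j=\gamma=0$. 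Everything is clean. One should also note this argument did not use any special placement of the interior nodes, only their distinctness and the inclusion of the two endpoints, consistent with the setup in the excerpt.
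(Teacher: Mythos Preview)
Your proof is correct and follows the same approach as the paper for the first claim: both argue that $\sum_i \varphi_i$ is a degree-$r$ polynomial equal to $1$ at the $r+1$ distinct nodes, hence identically $1$, and then differentiate. For the second claim the paper simply defers to the known Lagrange case in \cite{GSCS}, whereas you supply an explicit argument via integration and node evaluation; your final version (integrate to a constant $\gamma$, evaluate at $\xi_m$ to get $\gamma=0$, then at each $\xi_j$ to get $c_j=0$) is clean and complete, though the preceding false start with the coefficients $\widetilde c_j$ is unnecessary and should be removed.
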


\begin{proof}
  We start by defining $ q(x) $ as the degree $ r $ polynomial
  $$ q(x) \doteq \sum_{i=1}^{r+1} \varphi_i (x) - 1. $$
  As the basis $ \{ \varphi_i \}_{i=1}^{r+1} $ is dual to weights associated with $ \{ \xi_i \}_{i=1}^{r+1} $, one has $ q(\xi_i) = 0 $ for each $ \xi_i $, whence $ q(x) = 0 $ and so $\sum_{i=1}^{r+1} \varphi_i (x) = 1$ identically. By differentiating both sides one obtains the first claim. The second part of the proof is identical to that already known for the Lagrange basis (see \cite{GSCS}).
\end{proof}

Equation~\eqref{eq:changepart2} motivates the following computation.


\begin{proposition} \label{prop:computingVanderm}
  Let $ V_{i,j} = \ell_j (\xi_i) $ be the Vandermonde matrix written with respect to the uniform Lagrangian basis $ \{\ell_i \}_{i=1}^{r+1} $ and some general collection of points $ \{ \xi_i \}_{i=1}^{r+1} $ such that $\xi_1=0$ and $\xi_{r+1}=1$. Then
  \begin{equation} \label{eq:inverseVdM}
    V^{-1} = \left[
    \begin{array}{c|c|c}
      1 & \boldsymbol{0} & 0\\ \hline
      -\boldsymbol{x}_V & (V')^{-1} & -\boldsymbol{y}_V \\ \hline
      0 & \boldsymbol{0} & 1
    \end{array}\right] ,
  \end{equation}
  where $ V' \doteq [V_{i,j}]_{i,j=2}^{r} $ and $ \boldsymbol{x}_V \doteq V' \boldsymbol{x} $ and $ \boldsymbol{y}_V \doteq V' \boldsymbol{y} $.
\end{proposition}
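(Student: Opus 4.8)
The plan is to use the endpoint conditions $\xi_1=0$ and $\xi_{r+1}=1$ to reveal a $3\times3$ block structure of $V$ (with block sizes $1$, $r-1$, $1$) and then invert it blockwise. First I would recall that the Lagrangian basis $\{\ell_i\}_{i=1}^{r+1}$ is the cardinal basis at the equispaced nodes $\tfrac{k-1}{r}$, $k=1,\ldots,r+1$, so that $\ell_j\bigl(\tfrac{k-1}{r}\bigr)=\delta_{j,k}$. Since $\xi_1=0$ and $\xi_{r+1}=1$ coincide with the first and last of these nodes, the first row of $V$ equals $[\ell_j(0)]_{j}=\boldsymbol e_1^{T}$ and the last row equals $[\ell_j(1)]_{j}=\boldsymbol e_{r+1}^{T}$; moreover the first and last columns of $V$ are $(1,\boldsymbol x^{T},0)^{T}$ and $(0,\boldsymbol y^{T},1)^{T}$ with $\boldsymbol x=[\ell_1(\xi_i)]_{i=2}^{r}$ and $\boldsymbol y=[\ell_{r+1}(\xi_i)]_{i=2}^{r}$ the interior parts. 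Hence
$$ V=\left[\begin{array}{c|c|c} 1 & \boldsymbol 0 & 0\\ \hline \boldsymbol x & V' & \boldsymbol y\\ \hline 0 & \boldsymbol 0 & 1\end{array}\right],\qquad V'\doteq[V_{i,j}]_{i,j=2}^{r}. $$

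Before inverting I would record that $V'$ is invertible: expanding $\det V$ along its first row and then the resulting $r\times r$ minor along its last row gives $\det V=\det V'$, and $V$ is invertible because the $\xi_i$ are distinct (polynomial interpolation on them is unisolvent), so $\det V'\neq0$. I would then verify directly that
$$ V^{-1}=\left[\begin{array}{c|c|c} 1 & \boldsymbol 0 & 0\\ \hline -(V')^{-1}\boldsymbol x & (V')^{-1} & -(V')^{-1}\boldsymbol y\\ \hline 0 & \boldsymbol 0 & 1\end{array}\right]$$
by block multiplication: the product with $V$ has outer block rows $\boldsymbol e_1^{T}$ and $\boldsymbol e_{r+1}^{T}$ by inspection, while its middle block row is $\bigl(-(V')^{-1}\boldsymbol x+(V')^{-1}\boldsymbol x,\ (V')^{-1}V',\ -(V')^{-1}\boldsymbol y+(V')^{-1}\boldsymbol y\bigr)=(\boldsymbol 0,I,\boldsymbol 0)$. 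Since this candidate is a left inverse of the square matrix $V$, it is $V^{-1}$, and it is exactly \eqref{eq:inverseVdM} upon setting $\boldsymbol x_V=(V')^{-1}\boldsymbol x$ and $\boldsymbol y_V=(V')^{-1}\boldsymbol y$.

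A slicker equivalent phrasing is to factor $V=\operatorname{diag}(1,V',1)\,(I+N)$, where $N$ is the matrix whose only nonzero blocks are the $(2,1)$ block $(V')^{-1}\boldsymbol x$ and the $(2,3)$ block $(V')^{-1}\boldsymbol y$; one checks $N^2=0$, hence $(I+N)^{-1}=I-N$ and $V^{-1}=(I-N)\operatorname{diag}(1,(V')^{-1},1)$, which is the displayed formula.

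There is no serious obstacle: the statement is a bookkeeping exercise in block inversion of an almost block-diagonal matrix. The only two points deserving an explicit word are the invertibility of the interior block $V'$, handled by the determinant identity $\det V=\det V'$ (which is in any case useful further on), and the matching of $\boldsymbol x$, $\boldsymbol y$ with the two boundary columns of $V$, fixed once and for all by the normalization $\xi_1=0$, $\xi_{r+1}=1$.
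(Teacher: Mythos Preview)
Your argument is correct and, in its two versions, covers both the paper's route and a more elementary one. The paper derives the block structure of $V$ exactly as you do, then writes $V=\operatorname{diag}(1,V',1)\bigl(I+U_V[\boldsymbol e_1^T;\boldsymbol e_{r+1}^T]\bigr)$ and applies Sherman--Morrison--Woodbury; since $[\boldsymbol e_1^T;\boldsymbol e_{r+1}^T]\,U_V=0$, the $2\times2$ block to invert is the identity and the formula falls out. Your alternative factorization $V=\operatorname{diag}(1,V',1)(I+N)$ with $N^2=0$ is the same manoeuvre stripped of the Woodbury machinery, and your primary proof by direct block multiplication is still simpler. Both gain nothing over the other beyond taste; the invertibility of $V'$ via $\det V=\det V'$ is exactly the paper's Lemma~\ref{lem:vdMsubmatrix}, which you have folded in.

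One point worth flagging: the statement as printed defines $\boldsymbol x_V\doteq V'\boldsymbol x$ and $\boldsymbol y_V\doteq V'\boldsymbol y$, but your computation (and the paper's own proof) shows that the blocks appearing in $V^{-1}$ are $-(V')^{-1}\boldsymbol x$ and $-(V')^{-1}\boldsymbol y$. So for \eqref{eq:inverseVdM} to hold one needs $\boldsymbol x_V=(V')^{-1}\boldsymbol x$ and $\boldsymbol y_V=(V')^{-1}\boldsymbol y$, which is what you correctly use; the ``$V'\boldsymbol x$'' in the statement is a typo.
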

\begin{proof}
  Since $\xi_1=0$ and $\xi_{r+1}=1$, $ V_{1,j} = \ell_j (\xi_1) = \delta_{j,1} $ and $ V_{r+1,j} = \ell_j (\xi_{r+1}) = \delta_{j,r+1} $. Thus
  \begin{equation*}
    V =\left[
    \begin{array}{c|c|c}
      1 & \boldsymbol{0} & 0\\ \hline
      \boldsymbol{x} & V' & \boldsymbol{y} \\ \hline
      0 & \boldsymbol{0} & 1
    \end{array}\right] = \left[
    \begin{array}{c|c|c}
      1 & \boldsymbol{0} & 0\\ \hline
      \boldsymbol{0} & V' & \boldsymbol{0} \\ \hline
      0 & \boldsymbol{0} & 1
    \end{array}\right] + \begin{bmatrix} 0 \\ \boldsymbol{x} \\ 0 \end{bmatrix} \boldsymbol{e}_1^T + \begin{bmatrix} 0 \\ \boldsymbol{y} \\ 0 \end{bmatrix} \boldsymbol{e}_{r+1}^T = \left[
    \begin{array}{c|c|c}
      1 & \boldsymbol{0} & 0\\ \hline
      \boldsymbol{0} & V' & \boldsymbol{0} \\ \hline
      0 & \boldsymbol{0} & 1
    \end{array}\right] + \begin{bmatrix} 0 & 0 \\ \boldsymbol{x} & \boldsymbol{y} \\ 0 & 0 \end{bmatrix} \left[ \begin{array}{c}\boldsymbol{e}_1^T \\ \hline
      \boldsymbol{e}_{r+1}^T \end{array} \right] .
  \end{equation*}
  Let us denote $ U_V \doteq \begin{bmatrix} 0 & 0 \\ \boldsymbol{x}_V & \boldsymbol{y}_V \\ 0 & 0 \end{bmatrix} $. We compute
  \begin{align*}
    V^{-1} & = \left( \left[
    \begin{array}{c|c|c}
      1 & \boldsymbol{0} & 0\\ \hline
      \boldsymbol{0} & V' & \boldsymbol{0} \\ \hline
      0 & \boldsymbol{0} & 1
    \end{array}\right] \left( I + U_V \left[ \begin{array}{c}\boldsymbol{e}_1^T \\ \hline
      \boldsymbol{e}_{r+1}^T \end{array} \right] \right) \right)^{-1} \\
    & =
    \left( I + U_V \left[ \begin{array}{c}\boldsymbol{e}_1^T \\ \hline
      \boldsymbol{e}_{r+1}^T \end{array} \right] \right)^{-1} \left[
    \begin{array}{c|c|c}
      1 & \boldsymbol{0} & 0\\ \hline
      \boldsymbol{0} & (V')^{-1} & \boldsymbol{0} \\ \hline
      0 & \boldsymbol{0} & 1
    \end{array}\right] \\
    & = \left[ I - U_V \left( I_2 + \left[ \begin{array}{c}\boldsymbol{e}_1^T \\ \hline
      \boldsymbol{e}_{r+1}^T \end{array} \right] U_V \right)^{-1} \left[ \begin{array}{c}\boldsymbol{e}_1^T \\ \hline
      \boldsymbol{e}_{r+1}^T \end{array} \right] \right] \left[
    \begin{array}{c|c|c}
      1 & \boldsymbol{0} & 0\\ \hline
      \boldsymbol{0} & (V')^{-1} & \boldsymbol{0} \\ \hline
      0 & \boldsymbol{0} & 1
    \end{array}\right] \\
    & = \left[ I - U_V \left[ \begin{array}{c}\boldsymbol{e}_1^T \\ \hline
      \boldsymbol{e}_{r+1}^T \end{array} \right] \right] \left[
    \begin{array}{c|c|c}
      1 & \boldsymbol{0} & 0\\ \hline
      \boldsymbol{0} & (V')^{-1} & \boldsymbol{0} \\ \hline
      0 & \boldsymbol{0} & 1
    \end{array}\right] \\ & = \left[
    \begin{array}{c|c|c}
      1 & \boldsymbol{0} & 0\\ \hline
      \boldsymbol{0} & (V')^{-1} & \boldsymbol{0} \\ \hline
      0 & \boldsymbol{0} & 1
    \end{array}\right] - U_V \left[ \begin{array}{c}\boldsymbol{e}_1^T \\ \hline
      \boldsymbol{e}_{r+1}^T \end{array} \right]
    = \left[
    \begin{array}{c|c|c}
      1 & \boldsymbol{0} & 0\\ \hline
      -\boldsymbol{x}_V & (V')^{-1} & -\boldsymbol{y}_V \\ \hline
      0 & \boldsymbol{0} & 1
    \end{array}\right] ,
  \end{align*}
  where the above identities are granted by the Sherman-Morrison-Woodbury formula (see \cite[Pag. $ 65 $]{Golub} and related references therein).
\end{proof}
      %

    The following result is then a straightforward consequence of Laplace's Theorem applied twice to $ V $.
    \begin{lemma} \label{lem:vdMsubmatrix}
      Under the same hypotheses of Proposition~\ref{prop:computingVanderm}, one has
      $$ \det V = \det V' . $$
    \end{lemma}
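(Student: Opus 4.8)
The plan is to exploit the block structure of $V$ exhibited in Proposition~\ref{prop:computingVanderm}. Recall that, because $\xi_1=0$ and $\xi_{r+1}=1$, the matrix $V$ has the form
\[
V = \left[
\begin{array}{c|c|c}
  1 & \boldsymbol{0} & 0\\ \hline
  \boldsymbol{x} & V' & \boldsymbol{y} \\ \hline
  0 & \boldsymbol{0} & 1
\end{array}\right],
\]
so its first row is $\boldsymbol{e}_1^T$ and its last row is $\boldsymbol{e}_{r+1}^T$. Thus the first and last rows each contain exactly one nonzero entry, equal to $1$, sitting in the first and last columns respectively.

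First I would apply Laplace (cofactor) expansion of $\det V$ along the first row: only the $(1,1)$ entry survives, giving $\det V = \det \widetilde V$, where $\widetilde V$ is the $r\times r$ minor obtained by deleting row $1$ and column $1$ of $V$. By the displayed block form, $\widetilde V$ has the shape
\[
\widetilde V = \left[
\begin{array}{c|c}
  V' & \boldsymbol{y} \\ \hline
  \boldsymbol{0} & 1
\end{array}\right],
\]
i.e.\ its last row is again a coordinate vector (the one with a single $1$ in the last position), because the bottom-left block $\boldsymbol{0}$ of $V$ contributes zeros. Then I would expand $\det \widetilde V$ along its last row: again only one entry, equal to $1$, survives, and deleting that row and the last column leaves exactly $V' = [V_{i,j}]_{i,j=2}^r$. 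Hence $\det V = \det \widetilde V = \det V'$, which is the claim. One harmless bookkeeping point is that the cofactor signs are all $+1$ here: expanding along row $1$ picks up $(-1)^{1+1}=1$, and expanding the resulting $r\times r$ matrix along its last row $r$ at column $r$ picks up $(-1)^{r+r}=1$, so no sign ambiguity arises.

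There is essentially no obstacle: the statement is purely a consequence of the sparsity of the first and last rows of $V$, which was already established in the proof of Proposition~\ref{prop:computingVanderm}. The only thing to be careful about is to expand along rows (not columns), since it is the \emph{rows} $V_{1,\cdot}$ and $V_{r+1,\cdot}$ that are coordinate vectors; alternatively one may invoke that $\det V = \det V^{-T}\,$-style manipulations, but the direct double cofactor expansion is cleanest. Equivalently, one could observe that $V$ is block upper-triangular after the rank-one corrections are removed, but since those corrections ($\boldsymbol{x}\boldsymbol{e}_1^T$ and $\boldsymbol{y}\boldsymbol{e}_{r+1}^T$) affect neither the first nor the last column in a way that changes the determinant, the two-step Laplace expansion is the most transparent route.
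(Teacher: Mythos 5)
Your argument is correct and coincides with the paper's own proof, which simply invokes Laplace's Theorem applied twice to $V$: you expand along the first row (which is $\boldsymbol{e}_1^T$ since $\xi_1=0$) and then along the last row of the resulting minor (coming from $\xi_{r+1}=1$), with both cofactor signs equal to $+1$, yielding $\det V=\det V'$. Nothing further is needed.
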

    The above Lemma shows that the determinant of the above $(r+1) \times (r+1) $ matrix can be indeed computed by considering a $ (r-1) \times (r-1) $ minor, and it is the key fact for proving the following result.
    \begin{theorem} \label{thm:detsquared}
      Let $ f^\xi_r (\theta) $ be the symbol \eqref{eq:symboldegr} associated with the Laplacian operator discretized with weights induced by points $ \{ \xi_i \}_{i=1}^{r+1} $, with $\xi_1=0$ and $\xi_{r+1}=1$. Then
      \begin{equation} \label{eq:symboldet}
        \det f^\xi_r (\theta) = \left(\det (V^{-1}) \right)^2 d_r (2 - 2 \cos(\theta)) ,
      \end{equation}
      where $d_r$ is defined in \eqref{eq:dr}.
    \end{theorem}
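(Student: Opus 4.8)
The plan is to combine the change-of-basis identity \eqref{eq:changepart2} with the known Lagrangian result $\det f_r(\theta)=d_r(2-2\cos\theta)$. First I would make precise the relation between the symbol $f_r^\xi$ associated with the weights-induced basis and the symbol $f_r$ associated with the uniform Lagrangian basis. Both arise, via the construction of Section \ref{sec:spectral}, by assembling the $(r+1)\times(r+1)$ local stiffness matrices $A^{\loc}$ (Lagrangian case) and $B^{\loc}$ (weights case), which by Proposition \ref{prop:bilinearchange} are related by $B^{\loc}=V^{-T}A^{\loc}V^{-1}$ where $V_{i,j}=\ell_j(\xi_i)$ is the generalized Vandermonde matrix of Proposition \ref{prop:computingVanderm}. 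The key point is that the assembly of $\alpha_\xi$ and $\beta_\xi$ (the overlap-summed blocks feeding \eqref{eq:symboldegr}) commutes appropriately with this congruence: since $V$ has the block form in Proposition \ref{prop:computingVanderm} with $1$'s in the $(1,1)$ and $(r+1,r+1)$ corners, the congruence by $V^{-1}$ does not mix the "first" and "last" rows/columns (those associated with $\xi_1=0$ and $\xi_{r+1}=1$) with each other, so the overlap-summation used to build $A_\xi$ from $A^{\loc}$ is respected. This should yield the clean relation
\begin{equation}\label{eq:symbolcongruence}
  f_r^\xi(\theta) = V^{-T} f_r(\theta) V^{-1}
\end{equation}
as matrix-valued functions on $[-\pi,\pi]$; here I am using that $V^{(T_k)}=V$ is the same for every element (the Remark after Proposition \ref{prop:bilinearchange}), so a single $V$ intertwines the whole Toeplitz symbol.

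Granting \eqref{eq:symbolcongruence}, the theorem follows by taking determinants: $\det f_r^\xi(\theta)=\det(V^{-T})\det f_r(\theta)\det(V^{-1})=(\det V^{-1})^2\,d_r(2-2\cos\theta)$, using $\det f_r(\theta)=d_r(2-2\cos\theta)$ from \cite[Theorem 8]{GSCS} together with Lemma \ref{lem:differentiallagrange}, which supplies exactly the ingredient $\sum_i \varphi_i'=0$ that the cited argument needs in the weights setting. Lemma \ref{lem:vdMsubmatrix} then lets one rewrite $\det V^{-1}=1/\det V'$ if a minor-level expression is preferred, but for the statement as written the factor $(\det V^{-1})^2$ suffices.

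The main obstacle I expect is justifying \eqref{eq:symbolcongruence} carefully rather than hand-wavingly, i.e. checking that the "assemble local blocks into the symbol" operation genuinely intertwines with the congruence by $V^{-1}$. Concretely one must verify that $\alpha_\xi = V^{-T}\alpha\, V^{-1}$ and $\beta_\xi = V^{-T}\beta\, V^{-1}$ where $\alpha,\beta$ are the corresponding Lagrangian blocks, and this is where the corner structure of $V$ in Proposition \ref{prop:computingVanderm} is essential: the entry of $A^{\loc}$ that gets doubled in the $(1,1)$ position is $A^{\loc}_{1,1}=\langle \ell_1',\ell_1'\rangle$ and it is paired under assembly with $A^{\loc}_{r+1,r+1}=\langle \ell_{r+1}',\ell_{r+1}'\rangle$ of the next block; because $V^{-1}$ fixes $\boldsymbol e_1$ and $\boldsymbol e_{r+1}$ (up to the known action on the interior block), the congruence acts block-diagonally with respect to the splitting "first index / interior indices / last index", and the two separate scalar doublings at positions $(1,1)$ and $(r+1,r+1)$ are preserved. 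Once this bookkeeping is in place, everything else is a one-line determinant computation.
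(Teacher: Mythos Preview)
There is a genuine gap. The key relation you propose, $f_r^\xi(\theta)=V^{-T}f_r(\theta)V^{-1}$, cannot hold as written: $V$ is $(r{+}1)\times(r{+}1)$ while both symbols are $r\times r$. More substantively, the global change-of-basis matrix from the equispaced Lagrange basis to the weights basis is \emph{not} block-diagonal, so the assembly does not simply commute with congruence by $V^{-1}$. From \eqref{eq:inverseVdM} the weight basis function attached to a shared node decomposes as the shared-node Lagrange function plus interior Lagrange functions of \emph{both} adjacent elements (via the columns $-\boldsymbol{x}_V,-\boldsymbol{y}_V$ of $V^{-1}$). Hence the global change-of-basis matrix is block \emph{bidiagonal}, with $r\times r$ symbol
\[
w(\theta)=\begin{pmatrix}1 & -\boldsymbol{x}_V^T - e^{i\theta}\boldsymbol{y}_V^T\\[2pt] \boldsymbol{0} & (V')^{-T}\end{pmatrix}.
\]
The correct symbol-level identity is $f_r^\xi(\theta)=w(\theta)\,f_r(\theta)\,w(\theta)^*$, and your determinant argument can then be salvaged because $\det w(\theta)=\det(V')^{-1}=\det V^{-1}$ is constant in $\theta$ (Lemma~\ref{lem:vdMsubmatrix}). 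But this is not what you wrote, and your claim that ``$V^{-1}$ fixes $\boldsymbol e_1$ and $\boldsymbol e_{r+1}$'' is exactly what fails: those columns carry the nonzero interior entries $-\boldsymbol{x}_V,-\boldsymbol{y}_V$ that generate the off-diagonal block and the $\theta$-dependence of $w$.

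For comparison, the paper avoids intertwining the full symbols. It works only with the $(r{-}1)\times(r{-}1)$ interior block $A''\doteq[A^{\loc}_{i,j}]_{i,j=2}^{r}$ and its equispaced-Lagrange counterpart $B''$: the block structure of $V^{-1}$ in \eqref{eq:inverseVdM} gives $A''=(V')^{-T}B''(V')^{-1}$ directly, whence $d_r^\xi\doteq\det A''=(\det V^{-1})^2 d_r$ via Lemma~\ref{lem:vdMsubmatrix}. The identity $\det f_r^\xi(\theta)=d_r^\xi(2-2\cos\theta)$ then follows by rerunning the proof of \cite[Theorem~8]{GSCS} in the weights basis, which needs only $\sum_i\varphi_i'=0$ from Lemma~\ref{lem:differentiallagrange}. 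This sidesteps the global change of basis entirely.
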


    \begin{proof}
    	Consider $ A^{\loc} $ and $ B^{\loc} $ and recall that they are related as in Proposition \ref{prop:bilinearchange}, Eq. \eqref{eq:changepart2}. We put
    	$$ A' \doteq [A^{\loc}_{i,j}]_{i,j=2}^{r+1} , \ A'' \doteq [A^{\loc}_{i,j}]_{i,j=2}^{r} \quad \text{and} \quad B' \doteq [B^{\loc}_{i,j}]_{i,j=2}^{r+1} , \ B'' \doteq [B^{\loc}_{i,j}]_{i,j=2}^{r}.$$
    	From \cite[Lemma $ 9 $]{GSCS}, we know that
    	$$ \det B' = \det \left[ < \ell_i', \ell_j' >\right]_{i,j=2}^{r+1} = \det \left[ < \ell_i', \ell_j' >\right]_{i,j=2}^r = d_r = \det B'' . $$
    	Let us denote
    	$$ d_r^{\xi} \doteq \det A'' = \det \left[ < \varphi_i', \varphi_j' >\right]_{i,j=2}^r .$$
	Following \eqref{eq:changepart2} and \eqref{eq:inverseVdM}, we compute 
    \begin{align*}
    	A^\loc & = V^{-T} B^\loc V^{-1} \\
    	& = \left[
    	\begin{array}{c|c|c}
    		1 & -\boldsymbol{x}_V^T & 0\\ \hline
    		\boldsymbol{0} & (V')^{-T} & \boldsymbol{0} \\ \hline
    		0 & -\boldsymbol{y}_V^T & 1
    	\end{array}\right] \left[
    	\begin{array}{c|c|c}
    	* & \boldsymbol{*} & *\\ \hline
    	\boldsymbol{*} & B'' & \boldsymbol{*} \\ \hline
    	* & \boldsymbol{*} & *
    	\end{array}\right] \left[
    	\begin{array}{c|c|c}
    	1 & \boldsymbol{0} & 0\\ \hline
    	-\boldsymbol{x}_V & (V')^{-1} & -\boldsymbol{y}_V \\ \hline
    	0 & \boldsymbol{0} & 1
    	\end{array}\right] \\
    	& = \left[
    	\begin{array}{c|c|c}
    		* & \boldsymbol{*} & *\\ \hline
    		\boldsymbol{0} & (V')^{-T} B'' & \boldsymbol{0} \\ \hline
    		* & \boldsymbol{*} & *
    	\end{array}\right] \left[
    	\begin{array}{c|c|c}
    		1 & \boldsymbol{0} & 0\\ \hline
    		-\boldsymbol{x}_V & (V')^{-1} & -\boldsymbol{y}_V \\ \hline
    		0 & \boldsymbol{0} & 1
    	\end{array}\right] \\
    	& = \left[
    	\begin{array}{c|c|c}
    		* & \boldsymbol{*} & *\\ \hline
    		\boldsymbol{*} & (V')^{-T} B'' (V')^{-1}& \boldsymbol{*} \\ \hline
    		* & \boldsymbol{*} & *
    	\end{array}\right] .
    \end{align*}
      As a consequence $ A'' = (V')^{-T} B'' (V')^{-1} $, whence $ d_r^{\xi} = \left(\det (V')^{-1} \right)^2  d_r $ and Lemma \ref{lem:vdMsubmatrix} thus immediately yields $ d_r^{\xi} = \left(\det (V^{-1}) \right)^2  d_r $. To obtain the claim of Eq. \eqref{eq:symboldet}, it is now sufficient to retrace the proof of \cite[Theorem $ 8 $]{GSCS}, whose applicability is granted by Lemma \ref{lem:differentiallagrange}.
    \end{proof}

    \subsection{Extremal eigenvalues and conditioning}\label{rem consequence}

    A remarkable feature pointed out in the analysis of equidistributed Lagrangian elements applied to the Laplacian is that eigenvalues functions of the spectral symbol are well separated; hence, there is only one of them having a zero at $ \theta=0 $  \cite{Rahla}. We exploit formula \eqref{eq:symboldet} to extend such a result to our framework.

    \begin{proposition} \label{prop:maximalminor}
      The south-east principal minor $S_\xi $ of $ f_r^\xi(\theta)$ of size $ (r-1) \times (r-1) $ has all the entries independent of $\theta$ and  $ \det S_\xi \ne 0 $. In particular, the minimal eigenvalue $m$ of $S_\xi $ is strictly positive.
    \end{proposition}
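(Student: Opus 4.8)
The plan is to obtain both assertions directly from the block structure of the symbol \eqref{eq:symboldegr}: the $\theta$-independence is a structural observation, and the nonvanishing of the determinant (together with positivity of the smallest eigenvalue) comes from recognising the relevant minor as a Gram matrix already studied in the proof of Theorem~\ref{thm:detsquared}.

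First I would exploit the shape of $\beta_\xi$. By construction $\beta_\xi$ has only its first row nonzero (equal to $\widetilde\beta_\xi$), hence $\beta_\xi^{T}$ has only its first column nonzero; therefore $\beta_\xi e^{i\theta}+\beta_\xi^{T}e^{-i\theta}$ is supported, as an $r\times r$ array, on the first row and the first column only. Consequently the south-east $(r-1)\times(r-1)$ window of $f_r^\xi(\theta)$, i.e.\ rows and columns $2,\dots,r$, coincides with the corresponding window of $\alpha_\xi$ and carries no $\theta$: this is the first assertion, and it gives $S_\xi=\big[(\alpha_\xi)_{i,j}\big]_{i,j=2}^{r}$. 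Next I would note that the ``assembly corrections'' entering the definition of $A_\xi$ only modify the $(1,1)$ and $(r+1,r+1)$ entries, so for $i,j\in\{2,\dots,r\}$ one has $(\alpha_\xi)_{i,j}=A^{\loc}_{i,j}=\langle\varphi_i',\varphi_j'\rangle_{L^2(\Ihat)}$ (using $b\equiv1$); hence $S_\xi=[\langle\varphi_i',\varphi_j'\rangle]_{i,j=2}^{r}=A''$ in the notation of the proof of Theorem~\ref{thm:detsquared}.

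The remaining point is positivity. By Lemma~\ref{lem:differentiallagrange} any $r$ of the polynomials $\{\varphi_i'\}_{i=1}^{r+1}$ form a basis of $\mathbb{P}_{r-1}$; in particular $\{\varphi_2',\dots,\varphi_{r+1}'\}$ is linearly independent, hence so is its subfamily $\{\varphi_2',\dots,\varphi_r'\}$. Thus for every nonzero $c=(c_2,\dots,c_r)$
$$ c^{T}S_\xi\,c=\Big\langle\sum_{i=2}^{r}c_i\varphi_i',\ \sum_{j=2}^{r}c_j\varphi_j'\Big\rangle_{L^2(\Ihat)}=\Big\|\sum_{i=2}^{r}c_i\varphi_i'\Big\|_{L^2(\Ihat)}^{2}>0, $$
so $S_\xi$ is symmetric positive definite; in particular $\det S_\xi\neq0$ and its minimal eigenvalue $m$ is strictly positive. (Equivalently, one could invoke $\det S_\xi=d_r^\xi=(\det V^{-1})^2 d_r$ from Theorem~\ref{thm:detsquared}, with $d_r>0$ by the same Gram-matrix argument applied to the equispaced Lagrange basis; the Gram-matrix route is preferable here because it yields positive definiteness, and hence $m>0$, with no extra work.)

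There is no genuine obstacle: the whole argument is elementary once the earlier results are in place. The only point deserving care is the bookkeeping — verifying that the south-east $(r-1)\times(r-1)$ window lands exactly on rows and columns $2,\dots,r$, that $\beta_\xi$ and $\beta_\xi^{T}$ vanish identically on that window, and that the boundary assembly corrections never reach the interior block, so that the identification $S_\xi=A''$ holds verbatim.
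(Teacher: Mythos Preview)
Your proof is correct. The $\theta$-independence part is essentially what the paper does (``direct inspection of \eqref{eq:symboldegr}''), only spelled out in more detail.

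For the positivity of $S_\xi$, however, you take a genuinely different route. The paper argues by contradiction: it first observes that $S_\xi$ is positive semidefinite (as a principal submatrix of the positive semidefinite $f_r^\xi(\theta)$), and then excludes $\det S_\xi=0$ because, via the eigenvalue interlacing theorem, a zero eigenvalue of the $\theta$-independent $S_\xi$ would force $\lambda_1(f_r^\xi(\theta))\equiv 0$, contradicting the determinant formula \eqref{eq:symboldet} of Theorem~\ref{thm:detsquared}. You instead identify $S_\xi$ with $A''=[\langle\varphi_i',\varphi_j'\rangle]_{i,j=2}^r$ and invoke Lemma~\ref{lem:differentiallagrange} to get linear independence of $\{\varphi_2',\dots,\varphi_r'\}$, so that $S_\xi$ is the Gram matrix of a linearly independent family and hence strictly positive definite. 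Your argument is more direct and self-contained (it needs only Lemma~\ref{lem:differentiallagrange}, not interlacing or the full determinant formula), while the paper's argument has the merit of showing how the spectral structure of $f_r^\xi$ itself forces the conclusion, which dovetails with the interlacing step used again in Theorem~\ref{theorem general}.
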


    \begin{proof}
      Let us define the south-east minor
      $$ S_\xi \doteq \left[ f_r^\xi \right]_{i,j=2}^r \in \mathbb{R}^{(r-1)\times(r-1)}.$$
      By direct inspection of relation (\ref{eq:symboldegr}) we see that all its entries are independent of $\theta$, see also Fig.~\ref{fig:Ablocks}. Since $ f_r^\xi(\theta)$ is positive semidefinite, the matrix $S_\xi$ has to be also positive semidefinite so that its determinant is nonnegative.
      We claim that $ \det S_\xi > 0 $. In fact, let us suppose by contradiction that $ \det (S_\xi) = 0 $. Then there exists an eigenvalue $ \lambda_1 = 0 $. The minor $S_\xi $ is independent of $ \theta $, hence the eigenvalues interlacing theorem implies that the eigenvalue function $ \lambda_1(f_r^\xi(\theta))$  is identically zero and the latter is in contrast with \eqref{eq:symboldet}. Hence $m=\lambda_1(S_\xi) > 0$.
    \end{proof}

    The previous result is quite important since in turn it allows to deduce directly the following theorem, which generalises \cite[Theorem $ 4 $]{Rahla} to the framework of weights.

    \begin{theorem}\label{theorem general}
      For the symbol $ f_r^\xi  (\theta)$, the following statements hold:
      \begin{enumerate}
        \item there exist constants
        $C_1,C_2>0$ (dependent on $f_r^\xi $) such that
        \begin{equation}\label{key ineqs}
          C_1 \sum_{j=1}^{r} (2-2\cos(\theta_{j})) \le
          \lambda_1(f_r^\xi ({\theta})) \le C_2 \sum_{j=1}^{r}
          (2-2\cos(\theta_{j}));
        \end{equation}
        \item there exist constants $m,M>0$ (dependent on $f_r^\xi$) such that
        \begin{equation} \label{eq:separateeig}
          0 < m \le \lambda_j(f_r^\xi ({\theta})) \le M,\ \ \ \
          j=2,\ldots,r.
        \end{equation}
        In particular $m=\lambda_1(S_\xi) > 0$ is the same quantity appearing in the last line of the proof of Proposition \ref{prop:maximalminor}.
      \end{enumerate}
    \end{theorem}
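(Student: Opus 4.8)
The plan is to establish the two statements essentially in the order they are listed, leaning on the structure of the symbol $f_r^\xi(\theta)=\alpha_\xi+\beta_\xi e^{i\theta}+\beta_\xi^T e^{-i\theta}$ together with the determinant formula \eqref{eq:symboldet} and Proposition~\ref{prop:maximalminor}. The key structural observation is that, because $\beta_\xi$ has nonzero entries only in its first row (by construction, recall $\beta_\xi=[\widetilde\beta_\xi^T\,;\,\boldsymbol 0]^T$), the $\theta$-dependence of $f_r^\xi(\theta)$ is confined to the first row and first column: only the $(1,1)$ entry and the first row/column carry the factors $e^{\pm i\theta}$, while the trailing $(r-1)\times(r-1)$ block $S_\xi=[f_r^\xi]_{i,j=2}^r$ is constant in $\theta$. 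This is exactly the content of Proposition~\ref{prop:maximalminor}, which also gives $\det S_\xi>0$ and hence $m=\lambda_1(S_\xi)>0$.

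For statement (2), I would argue by eigenvalue interlacing: since $S_\xi$ is a principal submatrix of the Hermitian matrix $f_r^\xi(\theta)$ of codimension $2$, Cauchy interlacing gives $\lambda_j(f_r^\xi(\theta))\ge\lambda_{j-1}(S_\xi)$ and $\lambda_j(f_r^\xi(\theta))\le\lambda_{j}(S_\xi)$ for the appropriate range of $j$; in particular, for $j=2,\ldots,r$ one obtains a lower bound by $\lambda_1(S_\xi)=m>0$, which is independent of $\theta$. For the upper bound $M$, one can simply take $M=\max_\theta\lambda_r(f_r^\xi(\theta))$, which is finite because $f_r^\xi$ is continuous and $[-\pi,\pi]$ is compact; this gives \eqref{eq:separateeig} with the claimed identification of $m$.

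For statement (1), the idea is that the smallest eigenvalue $\lambda_1(f_r^\xi(\theta))$ must vanish exactly where $\det f_r^\xi(\theta)$ vanishes, since by (2) the other $r-1$ eigenvalues are bounded away from $0$ uniformly in $\theta$. By \eqref{eq:symboldet}, $\det f_r^\xi(\theta)=c\,(2-2\cos\theta)$ with $c=(\det V^{-1})^2 d_r>0$, and using $\prod_{j=2}^r\lambda_j(f_r^\xi(\theta))\in[m^{r-1},M^{r-1}]$ one gets
\[
\lambda_1(f_r^\xi(\theta))=\frac{\det f_r^\xi(\theta)}{\prod_{j=2}^r\lambda_j(f_r^\xi(\theta))}\in\left[\frac{c}{M^{r-1}}(2-2\cos\theta),\ \frac{c}{m^{r-1}}(2-2\cos\theta)\right],
\]
which is \eqref{key ineqs} with $C_1=c/M^{r-1}$, $C_2=c/m^{r-1}$ and the single summand $2-2\cos\theta$ on the right-hand side (the general statement with a sum $\sum_j(2-2\cos\theta_j)$ covers the tensorized multivariate case but reduces here to one term; I would phrase it accordingly or cite the reduction as in \cite{Rahla}). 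This same two-sided sandwich also shows that $\lambda_1(f_r^\xi(\theta))$ has a zero precisely at $\theta=0$ and nowhere else, recovering the well-separatedness observed for the equispaced Lagrangian case.

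The main obstacle I anticipate is not any single hard estimate but making sure the interlacing argument is applied with the correct indexing and that the positivity $c=(\det V^{-1})^2 d_r>0$ is genuinely available — the factor $(\det V^{-1})^2$ is a square, hence nonnegative, and is nonzero precisely because unisolvence forces $V$ invertible, while $d_r>0$ follows from $d_r=\det[\langle\ell_i',\ell_j'\rangle]_{i,j=2}^r$ being a Gram-type determinant of linearly independent functions (Lemma~\ref{lem:differentiallagrange} guarantees the $\ell_i'$, and likewise the $\varphi_i'$, span $\mathbb P_{r-1}$ with any $r$ of them independent). Once these positivity facts are in hand, everything else is a clean consequence of Cauchy interlacing, continuity on a compact set, and the determinant identity \eqref{eq:symboldet}.
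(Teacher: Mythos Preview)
Your proposal is correct and follows essentially the same route as the paper: the paper's proof is a two-line pointer to the interlacing theorem together with Proposition~\ref{prop:maximalminor} and Theorem~\ref{thm:detsquared}, and you have simply (and accurately) unpacked what that means, including the explicit sandwich $\lambda_1(f_r^\xi(\theta))=\det f_r^\xi(\theta)/\prod_{j\ge2}\lambda_j$ that yields the constants $C_1,C_2$. One small slip: $S_\xi$ is an $(r-1)\times(r-1)$ principal submatrix of the $r\times r$ matrix $f_r^\xi(\theta)$, so its codimension is $1$, not $2$; the Cauchy interlacing inequalities you wrote are nonetheless the correct ones for that situation, so the argument goes through unchanged.
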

    \begin{proof}
      Given the Hermitian character of $f_r^\xi ({\theta})$ for every $\theta$, the two statements directly follow from the use of the interlacing theorem and from Proposition \ref{prop:maximalminor}, taking into account Theorem \ref{thm:detsquared}.
    \end{proof}

    \begin{remark}\label{rem:separation}
      A practical consequence of \eqref{eq:separateeig} is that graphs of the eigenvalue functions $ \lambda_i (f_r^\xi ( \theta)) $ are separated in a weak sense. For the case of equidistributed Lagrangian, it has been observed that this separation is in particular strong \cite{Rahla}. In the case of weights, this separation may be weak depending on the value of $ \xi $. In Figure \ref{fig:ploteigdeg4} we in fact compare examples of strong (left hand panel) and weak (right hand panel) separation.
    \end{remark}

    For any choice $ \{\xi_i\}_{i=1}^{r+1} $ such that the generalized Vandermonde matrix is invertible, the sequence of stiffness matrices is distributed as the symbol $f_r^\xi$. 
    Thus the union of the ranges of the eigenvalue functions
    of $f_r^\xi$ represent a cluster for their spectra, while their convex
    hull contains all the eigenvalues of the involved matrices.

    \begin{remark}\label{rem:cond}
      From Theorem \ref{theorem general}, we know that the
      minimal eigenvalue function of $f_r^\xi$ behaves as the symbol of the
      standard finite difference Laplacian, while  the other eigenvalue
      functions are well separated from zero and bounded. Furthermore,
      thanks to the analysis in \cite{marko}, the fact that the minimal
      eigenvalue of $f_r^\xi$ has a zero of order two implies that
    \begin{itemize}
        \item the minimal eigenvalue goes to zero as $n^{-2}$,
        \item the maximal eigenvalue converges from below to the maximum of the
        maximal eigenvalue function of  $f_r^\xi$,
      \end{itemize}
      and hence
      \begin{itemize}
        \item the conditioning of $A_n$ grows asymptotically exactly as $n^{2}$.
      \end{itemize}
    \end{remark}

We conclude this section with an analysis of the kernel of $f^\xi_r$.
It follows from Proposition \ref{prop:maximalminor} that there is only an eigenvalue of $f^\xi_r$ that assumes the value $ 0 $ for $ \theta = 0 $. As a consequence, 
 $ \ker \{ f_r^{\xi} (0)\} $ is generated by one vector. The following proposition shows that such a vector is independent of 
$ \{ \xi_i \}_{i=1}^{r+1} $.

\begin{proposition}
  Let $ {\bf j} \doteq (j, \ldots, j)\in\mathbb{C}^r $. For each $ \xi $ and $ r $, one has
  $$ f_r^\xi (0) { \bf 1 } = {\bf 0} .$$
\end{proposition}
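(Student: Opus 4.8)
The plan is to derive the identity from the fact that the weight-dual basis reproduces the constant function $u\equiv1$, which the stiffness form annihilates ($a(1,v)=\int_I 1'\,v'=0$). Concretely, since here the weight dof's are point evaluations, $\mathrm{dof}_i(1)=1$ for each $i$, so $\sum_{i=1}^{r+1}\varphi_i=1$ in the weight-dual basis $\{\varphi_i\}$ — exactly the identity established inside the proof of Lemma~\ref{lem:differentiallagrange} — and differentiating gives $\sum_{i=1}^{r+1}\varphi_i'=0$. For the local stiffness matrix with entries $A^{\loc}_{i,j}=\langle\varphi_i',\varphi_j'\rangle$ this forces $\sum_{j=1}^{r+1}A^{\loc}_{i,j}=\langle\varphi_i',\sum_{j}\varphi_j'\rangle=0$ for every $i$; equivalently $A^{\loc}\mathbf{1}_{r+1}=\mathbf{0}$, with $\mathbf{1}_{r+1}$ the all-ones vector of length $r+1$.

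It then remains to propagate this kernel property through the overlap-folding that builds the $r\times r$ symbol block $f_r^\xi(0)=\alpha_\xi+\beta_\xi+\beta_\xi^T$. I would substitute the definitions in terms of $A^{\loc}$ — $\alpha_\xi$ is the leading $r\times r$ submatrix of $A_\xi$, so $(\alpha_\xi)_{i,j}=A^{\loc}_{i,j}$ except $(\alpha_\xi)_{1,1}=A^{\loc}_{1,1}+A^{\loc}_{r+1,r+1}$, while $\beta_\xi$ has only its first row nonzero, equal to $(A^{\loc}_{r+1,j})_{j=1,\ldots,r}$ — and then read off the $i$-th component of $f_r^\xi(0)\mathbf{1}$. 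Replacing each truncated sum $\sum_{j=1}^{r}A^{\loc}_{i,j}$ by $-A^{\loc}_{i,r+1}$ via $\sum_{j=1}^{r+1}A^{\loc}_{i,j}=0$, every component collapses to $A^{\loc}_{r+1,i}-A^{\loc}_{i,r+1}$, which vanishes by symmetry of $A^{\loc}$. Hence $f_r^\xi(0)\mathbf{1}=\mathbf{0}$, and by linearity $f_r^\xi(0)\mathbf{c}=\mathbf{0}$ for any constant vector $\mathbf{c}$. The same conclusion has a slicker packaging: assembled periodically on $n$ elements, the stiffness matrix is the block circulant matrix $C_n(f_r^\xi)$, whose kernel contains the coefficient vector of $u\equiv1$ (the all-ones vector of length $nr$), so reading one block row of $C_n(f_r^\xi)\mathbf{1}=\mathbf{0}$ gives the claim directly.

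The only delicate point — where a careless computation breaks — is the bookkeeping of the overlapping blocks: the doubled corner entries $(1,1)$ and $(r+1,r+1)$ of $A_\xi$, the zero-padding of $\widetilde{\beta}_\xi$ into $\beta_\xi$, and the column index $r+1$ dropped when the full row sum of $A^{\loc}$ is truncated to its first $r$ columns. These are precisely the terms that would otherwise obstruct the telescoping, and they cancel exactly because $\langle\varphi_i',\varphi_j'\rangle$ is symmetric; organized this way, the argument uses no analytic input beyond Lemma~\ref{lem:differentiallagrange}.
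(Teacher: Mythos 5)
Your proposal is correct and follows essentially the same route as the paper: both arguments reduce $f_r^\xi(0)\mathbf{1}=\mathbf{0}$ to the vanishing of the row sums of $\alpha_\xi+\beta_\xi+\beta_\xi^T$, which follows from Lemma \ref{lem:differentiallagrange} ($\sum_i\varphi_i'=0$, itself obtained from the reproduction of constants by the weight-dual basis). The only difference is presentational: you make the overlap bookkeeping explicit via $A^{\loc}\mathbf{1}_{r+1}=\mathbf{0}$ and the symmetry of $A^{\loc}$, whereas the paper compresses this into a single regrouping of the inner products $\langle\varphi_i',\varphi_j'\rangle$, $i=0,\ldots,r$.
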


\begin{proof}
  By substituting $ \theta = 0 $ in \eqref{eq:symboldegr}, one immediately obtains that
  $$ f_r^\xi (0) { \bf 1 } = \left( \alpha_\xi + \beta_\xi + \beta_\xi^T \right) { \bf 1 }.$$
  Since multiplying by $ { \bf 1 } $ equals to taking sum row-wise, we are left to prove that
  $$ \sum_{i=1}^{r} \left( \alpha_\xi + \beta_\xi + \beta_\xi^T \right)_{j,i} = 0 \quad j = 1, \ldots, r $$
  Plugging in the definition of $ \alpha_\xi $ and $ \beta_\xi $, one then has
  $$ \sum_{i=1}^{r} \left( \alpha_\xi + \beta_\xi + \beta_\xi^T \right)_{j,i} = \sum_{i=1}^r \left\langle \varphi_i', \varphi_j' \right\rangle + \left\langle \varphi_0', \varphi_j' \right\rangle = \left\langle  \sum_{i=0}^r \varphi_i', \varphi_j' \right\rangle = \left\langle 0, \varphi_j' \right \rangle = 0 . $$
  The equality $ \sum_{i=0}^r \varphi_i' = 0 $ is proved in Lemma \ref{lem:differentiallagrange}.
\end{proof}

\begin{remark}
  Since $ \dim \ker \{ f_r^{\xi} (0)\} = 1 $, as an immediate corollary we obtain that $ \ker \{ f_r^{\xi} (0)\} = \left \langle {\bf 1} \right\rangle $. We notice that the latter property is exactly the same in the case of the spectral symbol occurring with standard finite element approximations
  as in \cite[Theorem $4$, item $1$]{Rahla}.
\end{remark}

\section{Optimization of weights}\label{sec:pb}

We apply the spectral theory of Section \ref{sec:spectral} to the case of weights introduced in Section \ref{ssec:weights}. As a first approach, in the next subsections, we confine ourselves in the case of univariate functions, although both the theory developed and the framework adopted do not impose any dimensional constraint. The multivariate case and the discontinuous case will be treated in companion papers.

\subsection{Constant coefficients} \label{ssec:constcoeff}
Let us first suppose that $ b(x) = 1 $ and that the mesh is uniform.
We now exploit the results of Section \ref{sec:spectral} and the machinery recalled in Section \ref{sec:spectral-tools}
to explicitly optimize the placement of weights
in the case $ r = 3 $ and $r=4$.

When $r=3$, a basis for weights is dual to four evaluations on $ \{\xi_i\}_{i=1}^4 $ in $ \Ihat $; this makes degree $ 3 $ the first nontrivial case.
Since global continuity requires $ \xi_1 = 0 $ and $ \xi_4 = 1 $,  we are left with the placement of $ \xi_2 $ and $ \xi_3 $. Adding the hypothesis of symmetry, one immediately gets that $ \xi_3 = 1 - \xi_2 $. We thus drop indices and consider only one parameter $ \xi $, so that nodes in $ \Ihat $ are $ \{0, \xi, 1-\xi, 1 \} $.

In our case the local  contribution of each element is constant and, recalling Equation \eqref{eq:Aloc}, we obtain that
  $$
  A^{\loc} = \begin{pmatrix}
    \frac{15\xi^4-30\xi^3+15\xi^2+2}{15\xi^2(1-\xi)^2} & \frac{4-5\xi}{30\xi^2(1-\xi)^2(2\xi-1)} & \frac{1-5\xi}{30 \xi^2(1-\xi)^2(2\xi-1)} & \frac{-30\xi^4+60\xi^3-30\xi^2+1}{30\xi^2(1-\xi)^2} \\[.2cm]
    \frac{4-5\xi}{30\xi^2(1-\xi)^2(2\xi-1)} & \frac{5\xi^2 - 5 \xi +2}{15\xi^2(1-\xi)^2(2\xi-1)^2}& \frac{10\xi^2-10\xi+1}{30\xi^2(2\xi-1)^2(1-\xi)^2} & \frac{1-5\xi}{30\xi^2(1-\xi)^2(2\xi-1)} \\[.2cm]
    \frac{1-5\xi}{30 \xi^2(1-\xi)^2(2\xi-1)} & \frac{10\xi^2-10\xi+1}{30\xi^2(2\xi-1)^2(1-\xi)^2} & \frac{5\xi^2 - 5\xi +2}{15\xi^2(2\xi-1)^2(\xi-1)^2} & \frac{4-5\xi}{30\xi^2(1-\xi)^2(2\xi-1)} \\[.2cm]
    \frac{-30\xi^4+60\xi^3-30\xi^2+1}{30\xi^2(1-\xi)^2} & \frac{1-5\xi}{30\xi^2(1-\xi)^2(2\xi-1)} & \frac{4-5\xi}{30\xi^2(1-\xi)^2(2\xi-1)} & \frac{15\xi^4 - 30\xi^3 +15\xi^2 + 2}{15\xi^2(1-\xi)^2}
  \end{pmatrix} .
  $$
  Following the construction at the beginning of Section~\ref{sec:spectral},
  the spectral symbol is the function
  \begin{align} \label{eq:symboldeg3}
    f_3^\xi: \quad [-\pi, \pi] & \to \mathbb{C}^{3 \times 3}, \\
    \theta & \mapsto \alpha_\xi + \beta_\xi e^{i \theta} + \beta_\xi^T e^{-i \theta}, \nonumber
  \end{align}
  being
  $$ \alpha_\xi = \begin{pmatrix}
    \frac{30\xi^4-60\xi^3+30\xi^2+4}{15\xi^2(1-\xi)^2} & \frac{4-5\xi}{30\xi^2(1-\xi)^2(2\xi-1)} & \frac{1-5\xi}{30 \xi^2(1-\xi)^2(2\xi-1)} \\[.2cm]
    \frac{4-5\xi}{30\xi^2(1-\xi)^2(2\xi-1)} & \frac{5\xi^2 - 5 \xi +2}{15\xi^2(1-\xi)^2(2\xi-1)^2} & \frac{10\xi^2-10\xi+1}{30\xi^2(2\xi-1)^2(1-\xi)^2} \\[.2cm]
    \frac{1-5\xi}{30 \xi^2(1-\xi)^2(2\xi-1)} & \frac{10\xi^2-10\xi+1}{30\xi^2(2\xi-1)^2(1-\xi)^2} & \frac{5\xi^2 - 5\xi +2}{15\xi^2(2\xi-1)^2(\xi-1)^2}
  \end{pmatrix} $$
  and
  $$ \beta_\xi = \begin{pmatrix}
    \frac{-30\xi^4+60\xi^3-30\xi^2+1}{30\xi^2(1-\xi)^2} & \frac{1-5\xi}{30\xi^2(1-\xi)^2(2\xi-1)} & \frac{4-5\xi}{30\xi^2(1-\xi)^2(2\xi-1)} \\[.2cm]
    0 & 0 & 0 \\[.2cm]
    0 & 0 & 0
  \end{pmatrix} .
  $$
  %

Direct computation shows that
\begin{equation} \label{eq:detsymboldeg3}
  \det(f_\xi (\theta)) = \frac{-e^{i \theta} \left(1 + 2e^{2i\theta} - 2e^{i\theta}\right)}{60\xi^4(2\xi-1)^2(\xi-1)^4} = \frac{2-2\cos(\theta)}{60\xi^4(2\xi-1)^2(\xi-1)^4},
\end{equation}
which is meaningful when $ \xi\neq0,\tfrac12,1$ (i.e. under the natural assumption that the nodes are pairwise distinct). Let us denote by $\lambda_j (f_3^\xi(\theta))$, $j=1,2,3$, the eigenvalues of 
$f_3^\xi(\theta)$ ordered nondecreasingly. 
Theorem \ref{theorem general} shows that
\begin{equation}
  \lambda_1 (f_3^\xi (0)) = 0.
\end{equation}
Moreover, Remark~\ref{rem:separation} ensures that the graphs of the the eigenvalues $\lambda_j (f_3^\xi(\theta))$, $j=1,2,3$, do not intersect for each $ \xi $, as depicted in Figures~\ref{fig:ploteigdeg3} and \ref{fig:ploteigdeg4}.
This latter Figure also shows that the separation is weak: in the right hand side panel, the minimum of the second eigenvalue is approaching the maximum of the smallest one.
Finally notice that, in contrast with the minimum of the smallest eigenvalue, which is exactly zero due to theoretical reasons (Theorem \ref{theorem general}), features of the maximum of the largest eigenvalue depends on the value of $ \xi $. This can be appreciated by comparing the panels of both Figure~\ref{fig:ploteigdeg3} and Figure~\ref{fig:ploteigdeg4}.

\begin{figure}[h]
  \centering
  \includegraphics[width=0.49\textwidth]{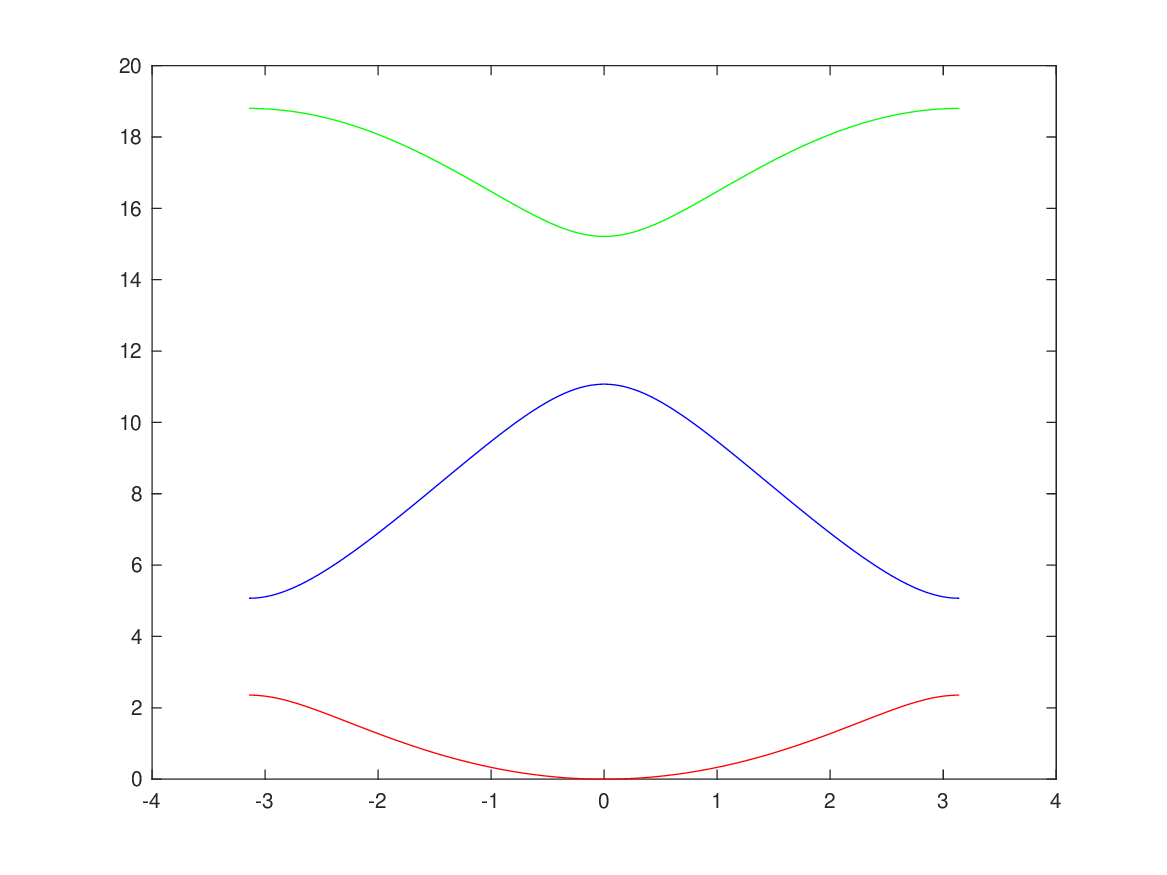}
  \includegraphics[width=0.49\textwidth]{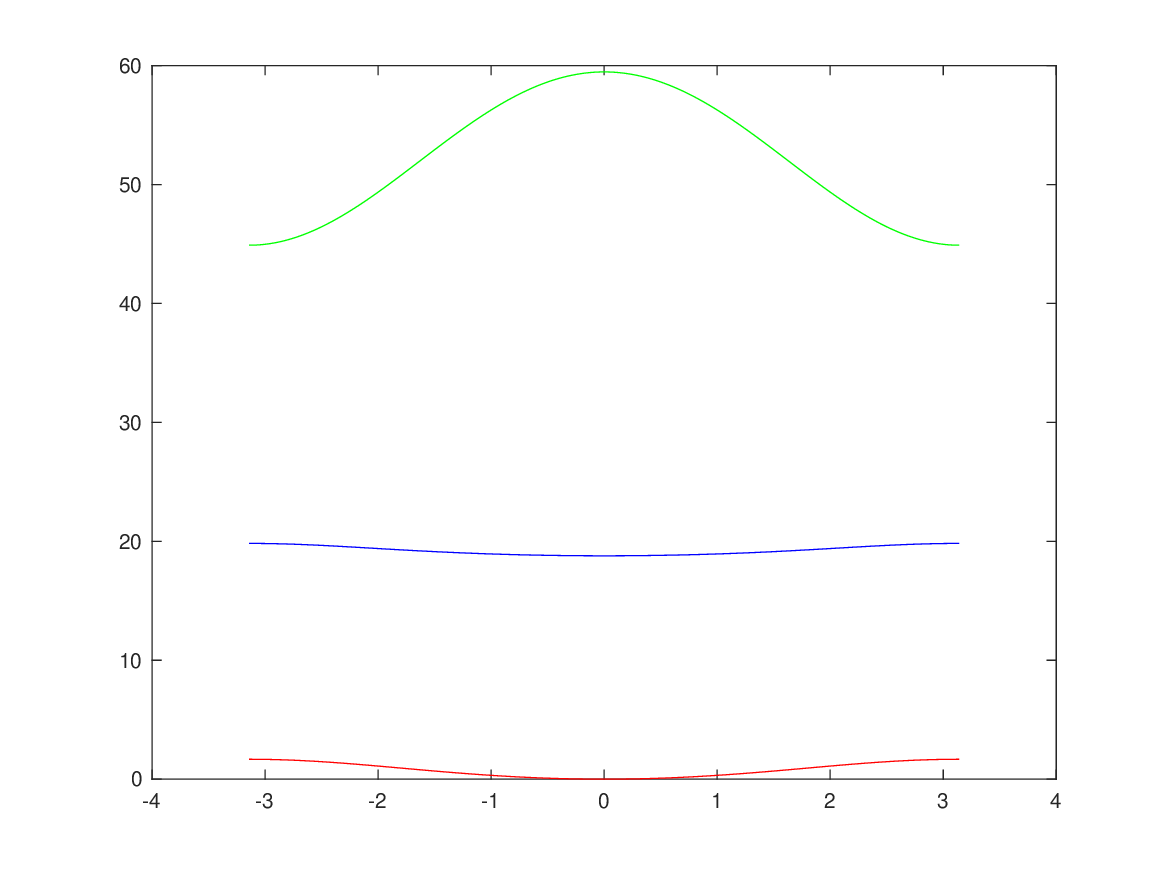}
  \caption{Eigenvalues functions of $ f_3^\xi (\theta) $ for $ \xi = 0.28 $ (left) and $\xi= 0.10 $ (right).}
  \label{fig:ploteigdeg3}
\end{figure}

The trend of $ \lambda_1(f_3^\xi(\theta))$ as $ \theta \to 0 $ is prescribed by the order of the operator and does \emph{not} depend on $ \xi $ (see Figure \ref{fig:ploteigdeg3}, red line). Expanding \eqref{eq:detsymboldeg3} around $ \theta = 0 $ we in fact retrieve that
\begin{align*} 
  \lambda_1 (f_3^\xi (\theta)) & = 
  \frac{\det(f_3^\xi(\theta))}{\lambda_2 (f_3^\xi (\theta)) \lambda_3 (f_3^\xi (\theta))} \\
  & = \theta^2 \frac{4 \xi^4 -12 \xi^3 + 13 \xi^2 - 6 \xi + 1}{3(2\xi-1)^2(\xi-1)^2} + O (\theta^4) = \frac{\theta^2}{3} + O (\theta^4) .
\end{align*}

Aiming at containing the conditioning $ \kappa_2 $ of the stiffness matrix, which is equal to the function
\begin{equation} \label{eq:condxisymbol}
  \kappa_2 (n, \xi) = n^2\frac{\max_{\theta} \lambda_3(f_3^\xi (\theta))-c_1 n^{-2}+O(n^{-3})}{c_2 \left(1+O(n^{-2})\right)}
  = \frac{n^2\max_{\theta} \lambda_3(f_3^\xi(\theta))}{c_2}\left(1+O(n^{-2})\right),
\end{equation}
with
\[
c_1=-\lambda_3^{''}(f_3^\xi(0))/2,\ \ \ c_2=\lambda_1^{''}(f_3^\xi(0))/2,
\]
we thus look for values of $ \xi $ that minimise $ \sup_{\xi} \lambda_3 $.

In contrast with $ \lambda_1 (f_r^\xi(\theta)) $, the supremum of $ \lambda_3(f_r^\xi (\theta)) $ is not necessarily attained at $ 0 $. In particular, this depends on the value of $ \xi $ (see Figure \ref{fig:ploteigdeg3}).

\begin{figure}[h]
  \centering
  \includegraphics[width=0.49\textwidth]{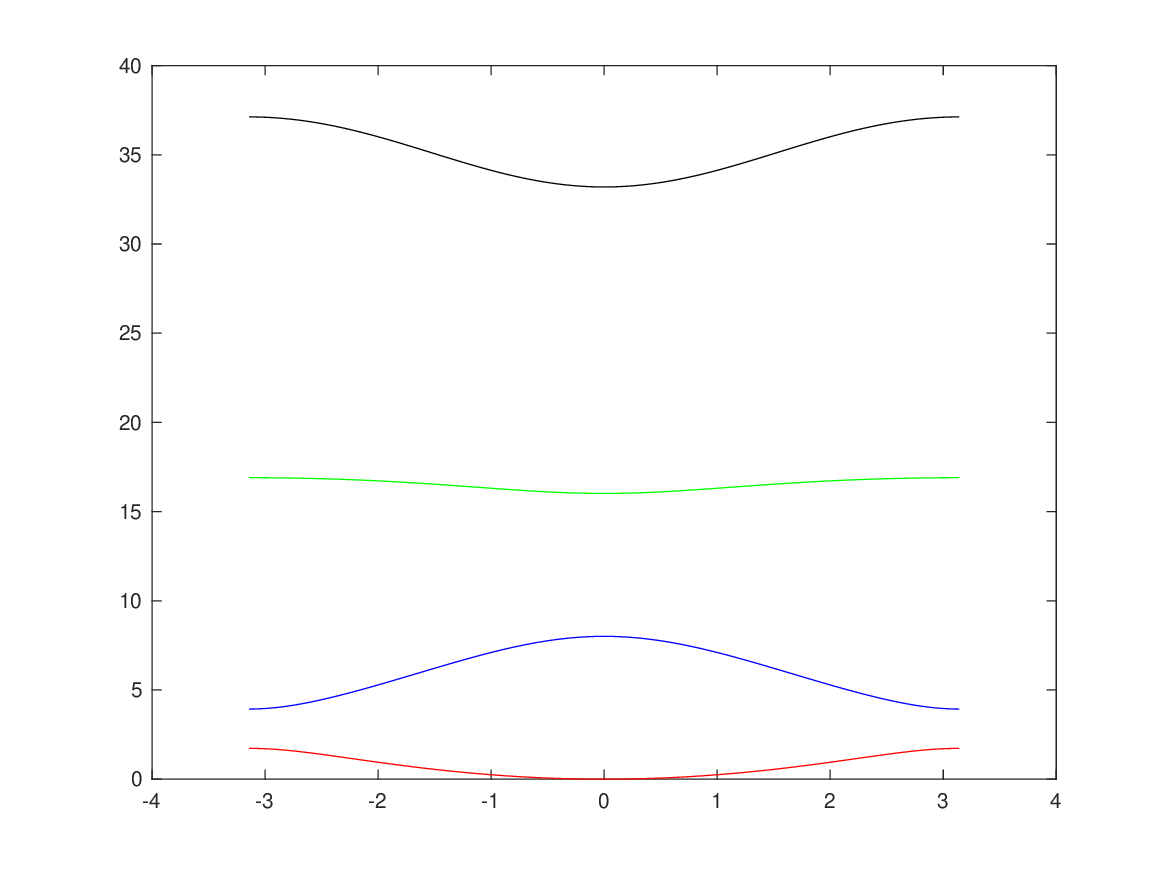}
  \includegraphics[width=0.49\textwidth]{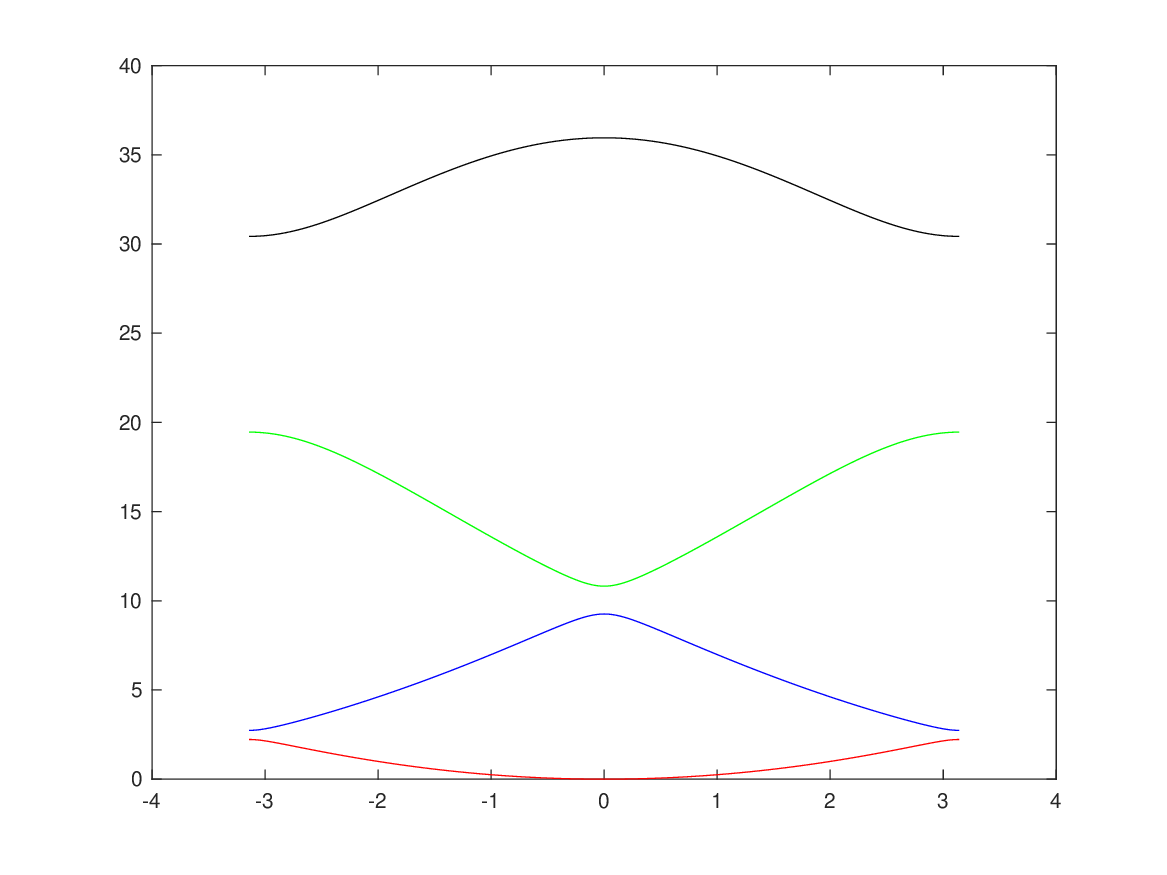}
  \caption{Eigenvalues functions of $ f_4^\xi (\theta) $ for $ \xi = 0.12 $ (left) and $ \xi = 0.22 $ (right).}
\label{fig:ploteigdeg4}
\end{figure}

As a consequence, explicit expressions for $ \lambda_3(f_r^\xi (\theta)) $ or even for $ \sup_{\xi} \lambda_3(f_r^\xi (\theta)) $ are not handy. Thus, in order to optimize the conditioning, we perform a sampling of $ \xi \in (0, \frac{1}{2}) $ and compute the quantity \eqref{eq:condxisymbol}.

\begin{figure}[h]
\centering
\includegraphics[width=0.49\textwidth]{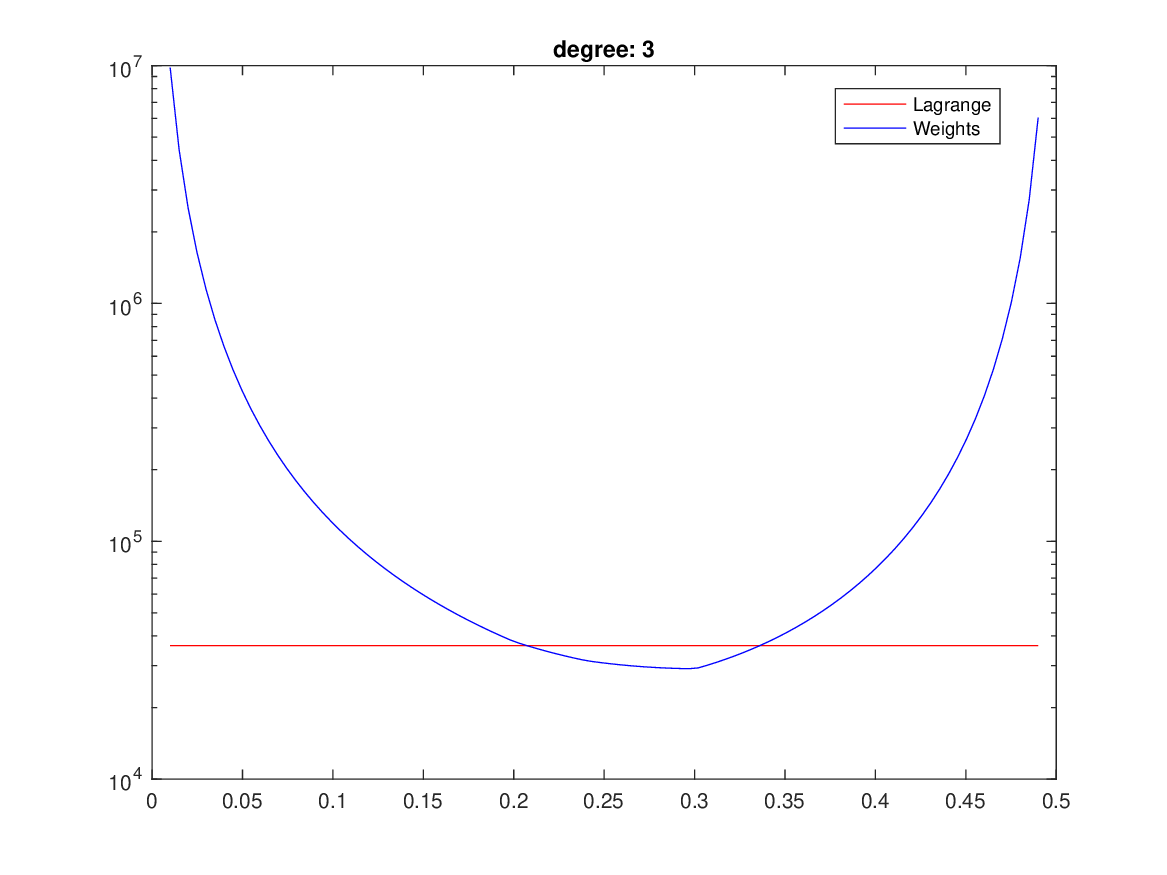}
\includegraphics[width=0.49\textwidth]{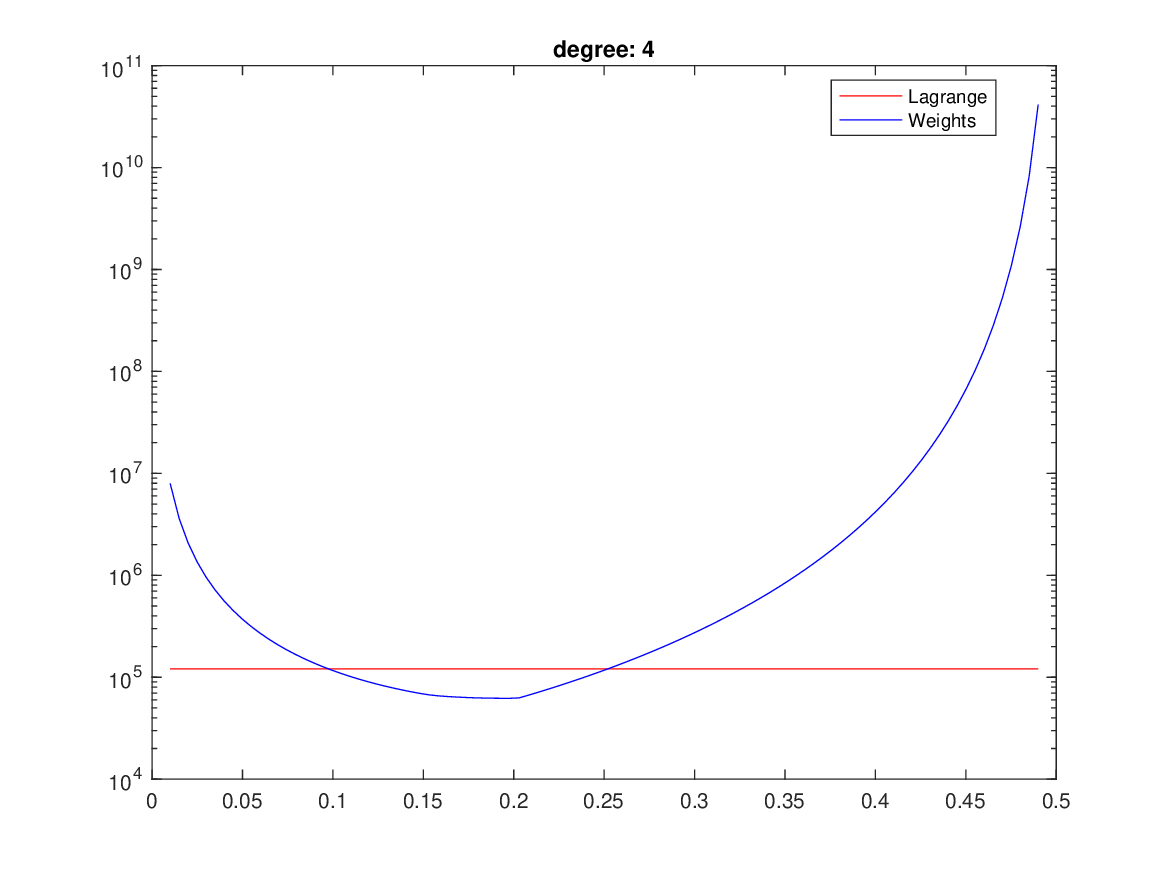}
\caption{A plot of the conditioning $ \kappa_2(n, \xi) $ as $ \xi $ varies, in logarithmic scale. Here the diameter of the interval is $ h = 1/64 $ so that the matrix size $n$ is fixed and the red line represents the conditioning with respect to uniform nodes, namely those in duality with usual Lagrangian elements. The cases $ r = 3 $ (left) and $ r = 4 $ (right) are depicted.}
\label{fig:optdeg34}
\end{figure}

The left panel of Figure \ref{fig:optdeg34} reports the quantity $ \kappa_2 (n, \xi) $ as $ \xi $ varies. As evident from the graph, there is an interval in which any choice of $ \xi $ reduces the conditioning of the spectral symbol (the blue line) and hence of the whole stiffness matrix. 
Inspecting the results, one finds that the minimum conditioning is attained at 
$ \xi \approx 0.29 $ for $ r = 3 $.

\begin{remark} \label{rmk:feketeexplanation}
  Note that, as a first attempt, one may try to optimize the conditioning of $A_n$ by minimizing the determinant of $ f_3^\xi (\theta) $. For $ r = 3 $, this gives the value $ \xi = \frac{1}{2} - \frac{1}{2 \sqrt{5}} $, thus leading to Gauss-Lobatto nodes. This is a consequence of Theorem \ref{thm:detsquared}. In fact, Gauss-Lobatto nodes maximizing the determinant of the Vandermonde matrix on an interval coincide with Fekete points \cite{Fekete}.
  We note that the value $ \xi \approx 0.29 $ is close to Gauss-Lobatto nodes.
\end{remark}

\begin{remark} \label{rmk:deg4}
A similar analysis can be produced for any degree $ r $. For $ r = 4 $ the problem can be handled again with only one parameter $ \xi $, noticing that symmetric points obey $ \left[ 0, \xi, \frac{1}{2}, 1-\xi, 1 \right] $. Again, the eigenvalue functions $\lambda_j (f_4^\xi)\theta))$, $j=1,2,3,4$, are distinct and well separated, as shown in Figure \ref{fig:ploteigdeg4}. An analysis on the optimization of the determinant yields again Gauss-Lobatto points (see Remark \ref{rmk:feketeexplanation} for an explanation), whereas a direct inspection of the graph in the right panel of Figure \ref{fig:optdeg34} shows that the optimal value is $ \xi \approx 0.21 $.
\end{remark}

\subsection{Non-constant coefficients}
Recall that in the case $b(x)=1$ the stiffness matrix $A_n$ is a principal submatrix of size $nr-1$ of the Toeplitz matrix $T_n(f_{r}^{\xi})$ with generating function $f^\xi_r$ defined in \eqref{eq:symboldegr}.
For making sizes compatible, consider
$\hat{A}_n(b)={\rm diag}(A_n(b),1)$, ${\rm order}(\hat{A}_n(b))=nr$, with
$A_n(b)$ being the stiffness matrix.
The key observation is that
\[
\hat{A}_n(b)=D_n([\sqrt b\, ] I_r)T_n(f^{r}_{\xi})D_n([\sqrt b \, ] I_r) + E^{r,b}_{\xi,n},
\]
where 
$D_n([\sqrt b\, ] I_r)$ is a diagonal matrix composed by blocks that are multiple of the $r \times r$ identity matrix; it is understood that in the $i$-th block the function $b(x)$ is evaluated at the midpoint of the $i$-th element.

By direct inspection, axiom {\bf GLT2}, item 3, and Theorem \ref{0cs}, we deduce that $\{E^{r,b}_{\xi,n}\}_n\sim_{\rm GLT} 0$.
Now, by axiom {\bf GLT2}, items 1, 2, we know that $\{T_n(f_r^\xi)\}_n\sim_{\rm GLT} f_r^\xi(\theta)$ and $\{D_n([\sqrt b\, ] I_r)\}_n\sim_{\rm GLT} [\sqrt{ b(x)}\, ] I_r$. Therefore the GLT symbol of $\{\hat{A}_n(b)\}_n$ is the Hermitian valued function defined as
\begin{equation} \label{eq:fr:b}
\begin{aligned}
f_{r,b}^{\xi}:\quad \Omega \times[-\pi,\pi] & \to \mathbb{C}^{r\times r}\\
(x,\theta) &\mapsto b(x)f_r^{\xi}(\theta),  
\end{aligned}
\end{equation}
so that $\{\hat{A}_n(b)\}_n \sim_\lambda f_{r,b}^{\xi}$ and the same is true for $\{{A}_n(b)\}_n$, thanks to Theorem \ref{extradimensional}.

\subsection{Non-uniform grids}\label{ssec:fr:quasiregular}
We start from the case of a graded mesh, in the sense of \cite{BeSe}. By this we mean a mesh that is obtained from the equispaced mesh by applying a sufficiently regular mapping.
Let us then consider a one-dimensional mesh in the domain $\Omega=I$ whose elements' endpoints are $y_k=g(x_k)$, where $x_k$ represent a uniform partition of $I$ and $g:I\to I$ is a sufficiently regular and invertible mapping that fixes the endpoints of $I$.

It is possible to observe that this case is contained in that of non constant coefficients \cite[Theorem $ 6.17 $]{etnaGLTbookIV}. 
In fact, by applying the change of variables $y=g(x)$, it can be seen that discretizing a problem with diffusion coefficient $b(x)$ on the non-uniform mesh is equivalent to discretizing on an equispaced mesh the elliptic problem with coefficient
\begin{equation} \label{eq:meshmapping}
  \widetilde{b} (x) = \frac{b(g(x))}{g'(x)} .
\end{equation}
The corresponding symbol of the sequence of stiffness matrices can thus be obtained by replacing $b$ with $\tilde{b}$ in \eqref{eq:fr:b}.

If the mesh is not graded, the present theory cannot be used as it is. Nevertheless, in the numerical section we will show that the predictions of the conditioning and the proposed preconditioners behave quite robustly under random mesh perturbations.

\section{Numerical experiments} \label{sec:numer}

Following the derivations in Section \ref{sec:spectral}, we know that the stiffness matrix has a tridiagonal Toeplitz-like structure with blocks of size $r$. The related generating function, given in equation (\ref{eq:symboldegr}) or \eqref{eq:fr:b}, is a Hermitian-valued, nonnegative definite linear trigonometric polynomial, whose minimal eigenvalue has a unique zero at zero of order $2$. Hence by exploiting known results in the literature \cite{marko}, as indicated in Remark \ref{rem:cond}, we know that the conditioning grows as $c_r^\xi n^2$ where the constant $c_r^\xi$ depends essentially on two analytic computable quantities, i.e. $\max_{\theta} \lambda_{\max}(f_r^\xi(\theta))$ and the second derivative of $\lambda_{\min}(f_r^\xi (\theta))$ at $\theta=0$, according to Theorem \ref{theorem general}. Figure \ref{fig:optdeg34} shows as the quantity can be affected by the constant $c_r^\xi$ and as it can be minimized.

According the previous spectral observations, from the point of view of the solution of a linear system of such a kind, two basic techniques can be proposed.

  %

\begin{itemize}
  \item[a1.] A preconditioned conjugate gradient (PCG) method where the preconditioner is a block circulant matrix with Strang correction having the same GLT symbol as the original matrix-sequence. When the problem is with a variable coefficient diffusion term, the preconditioner is enriched with the use of block diagonal sampling matrix so that, again, the GLT symbol of the precondining matrix-sequence coincides with that of the original matrix-sequence and hence by axiom {\bf GLT1} and items 3, 4 of axiom {\bf GLT3}, we deduce that the preconditioned matrix-sequence has symbol 1, the latter meaning that all the eigenvalues are clustered at $1$, according to Definition \ref{def-cluster} and Remark \ref{rem:clustering vs distribution}, so indicating a fast convergence.
  \item[a2.] A multigrid method which is essentially standard with a tensorization of the standard projection matrix. This is based on the inequality $L_n \le c T_n$ with $L_n=\Delta_n\otimes I_r$ and $\Delta_n$ being the standard discrete Laplacian obtained using centered Finite Differences i.e. $\Delta_n=T_n(g(\theta))$, $g(\theta)=2-2\cos(\theta))$.
\end{itemize}

In this section we follow the approach a1 and 
specifically we propose a circulant preconditioner matrix; we point out that the choice of circulant matrices allows to apply the preconditioner efficiently via FFT transforms.
In particular, we use a $r$-block circulant preconditioner of size $nr$; since our stiffness matrix $A_n$ has size $nr-1$, we change the linear system by adding a fictitious equation so that the resulting coefficient matrix is
\[
\hat{A}_n={\rm diag}(A_n,1),\ \ \ {\rm order}(\hat{A}_n)=nr.
\]
The circulant preconditioner for the $b(x)=1$ case is then constructed as $S_n=C_n(f_r^\xi)+\frac{i}{nr}{ \bf 1 }{ \bf 1 }^T$, since $C_n(f_r^\xi)$ is singular with null space of dimension $1$ generated by the vector ${ \bf 1 }$: the matrix $S_n$ is known as the corrected Strang preconditioner of the Toeplitz matrix $T_n(f_r^\xi)$ (see \cite{Ng-book} and references therein).
It is nice to observe that $S_n=\hat{A}_n+R_n$ where all the matrices are real symmetric (hence Hermitian) and rank$(R_n)\le 2r+2$. Therefore by axiom
{\bf GLT2}, item 3, and using the first part of Theorem \ref{0cs}, we deduce that $\{R_n\}_n\sim_{\rm GLT} 0$ so that by axiom {\bf GLT2}, item 3, the three matrix-sequences $\{T_n(f_r^\xi)\}_n$, $\{\hat{A}_n\}_n$, $\{S_n\}_n$ have the same GLT symbol $f_r^\xi$. Therefore, by exploiting axiom {\bf GLT3}, items 3, 4, we easily conclude that the preconditioned matrix-sequence has GLT symbol equal to $1$ and hence it is weakly clustered at $1$ in a spectral sense. Indeed in this specific setting more is known since the preconditioned matrix minus the identity has rank bounded by $2r-2$ and hence a constant number of iterations has to be expected.

\subsection{Convergence of the FEM method}

First of all, we check that the convergence of the FEM Galerkin discretization is in accordance with the theory. We show this for the case of constant coefficients and consider a uniform mesh; more elaborated distributions are also taken into account and discussed from Section \ref{sect:quasiregularmesh} on. 
It is well known that, if the datum is sufficiently regular, then the $H^1$-approximation error given by the finite element method for this kind of problems decays as $ h^{r} $, being $ h $ the element size and $ r $ the polynomial degree (see, e.g., \cite{CiarletBook}). It is important to point out that, being this a Galerkin method, the error shall not depend on the projector but only on the discrete space in which the solution is sought. As a consequence, we shall also check that (up to rounding errors) the convergence for equidistributed Lagrangian and weights coincide, since in both cases they seek for a solution in the subspace of fucntions in $H^1$ which are locally polynomials of degree $r$ in each element. Tables \ref{tab:errorsdeg3} and \ref{tab:errorsdeg4} show that the theoretical convergence order is attained, at least until computations start being affected by rounding errors.

\begin{table}[!h]
  \centering
  \begin{tabular}{|c|c|c|c|c|}
    \hline
    \multicolumn{1}{|c|}{} & \multicolumn{2}{|c|}{error} & \multicolumn{2}{|c|}{convergence rate} \\
    \hline
    elements & Lagrangian & Weights & Lagrangian & Weights \\
    \hline
    $ 10 $ & $ 5.26 \times 10^{-4} $ & $ 5.26 \times 10^{-4} $ & &  \\
    $ 20 $ & $ 6.58 \times 10^{-5} $ & $ 6.58 \times 10^{-5} $ & $ 2.9991 $ & $ 2.9991 $ \\
    $ 40 $ & $ 8.22 \times 10^{-6} $ & $ 8.22 \times 10^{-6} $ & $ 2.9998 $ & $ 2.9998 $ \\
    $ 80 $ & $ 1.03 \times 10^{-6} $ & $ 1.03 \times 10^{-6} $ & $ 2.9999 $ & $ 2.9999 $  \\
    $ 160 $ & $ 1.29 \times 10^{-7} $ & $ 1.31 \times 10^{-7} $ & $ 3.0000 $ & $ 2.9640 $ \\
    $ 320 $ & $ 1.73 \times 10^{-8} $ & $ 1.61 \times 10^{-8} $ & $ 2.8932 $ & $ 3.0361 $ \\
    $ 640 $ & $ 2.02 \times 10^{-9} $ & $ 2.02 \times 10^{-9} $ & $ 3.0982 $ & $ 2.9932 $ \\
    $ 1280 $ & $ 6.80 \times 10^{-10} $ & $ 1.23 \times 10^{-9} $ & $ 1.5705 $ & $ 0.7160 $ \\
    \hline
  \end{tabular}
  \caption{Approximation error and convergence rate for $ r = 3 $.} \label{tab:errorsdeg3}
\end{table}

\begin{table}[!h]
  \centering
  \begin{tabular}{|c|c|c|c|c|}
    \hline
    \multicolumn{1}{|c|}{} & \multicolumn{2}{|c|}{error} & \multicolumn{2}{|c|}{convergence rate} \\
    \hline
    elements & Lagrangian & Weights & Lagrangian & Weights \\
    \hline
    $ 2 $ & $ 5.72 \times 10^{-3} $ & $ 5.72 \times 10^{-3} $ & &  \\
    $ 4 $ & $ 3.58 \times 10^{-4} $ & $ 3.58 \times 10^{-4} $ & $ 3.9993 $ & $ 3.9993 $ \\
    $ 8 $ & $ 2.25 \times 10^{-5} $ & $ 2.25 \times 10^{-5} $ & $ 3.9931 $ & $ 3.9931 $ \\
    $ 16 $ & $ 1.41 \times 10^{-6} $ & $ 1.41 \times 10^{-6} $ & $ 3.9980 $ & $ 3.9966 $  \\
    $ 32 $ & $ 8.79 \times 10^{-8} $ & $ 1.17 \times 10^{-7} $ & $ 3.9995 $ & $ 3.5856 $ \\
    $ 64 $ & $ 9.74 \times 10^{-9} $ & $ 1.37 \times 10^{-8} $ & $ 3.1738 $ & $ 3.0959 $ \\
    $ 128 $ & $ 3.57 \times 10^{-10} $ & $ 3.53 \times 10^{-10} $ & $ 4.7704 $ & $ 5.2782 $ \\
    $ 256 $ & $ 3.58 \times 10^{-10} $ & $ 3.39 \times 10^{-10} $ & $ -0.0046 $ & $ 0.0611 $ \\
    \hline
  \end{tabular}
  \caption{Approximation error and convergence rate for $ r = 4 $.} \label{tab:errorsdeg4}
\end{table}

In the following sections we will compare the spectrum of the assembled matrices with that predicted by the generating symbol and study the effectiveness of the proposed circulant preconditioner.

\subsection{Constant coefficients} \label{sec:expconstcoeff}

In Figure \ref{fig:spectrumdeg3}, for $r=3$, we show that the spectrum of the operator (blue, solid line) is in fact captured by the spectral symbol (as the number of elements increase, the color of the corresponding dashed line changes). This is in accordance with the theory reported and discussed in the previous sections. Moreover, we see that $r$ distinct branches are present. The latter holds both for the case of equispaced Lagrangian (left) and optimized weights (right).
Notice that the magnitude of the eigenvalues changes.
Analogous results were observed in the case $r=4$ (not shown).

\begin{figure}[h]
  \centering
  \includegraphics[width=0.49\textwidth]{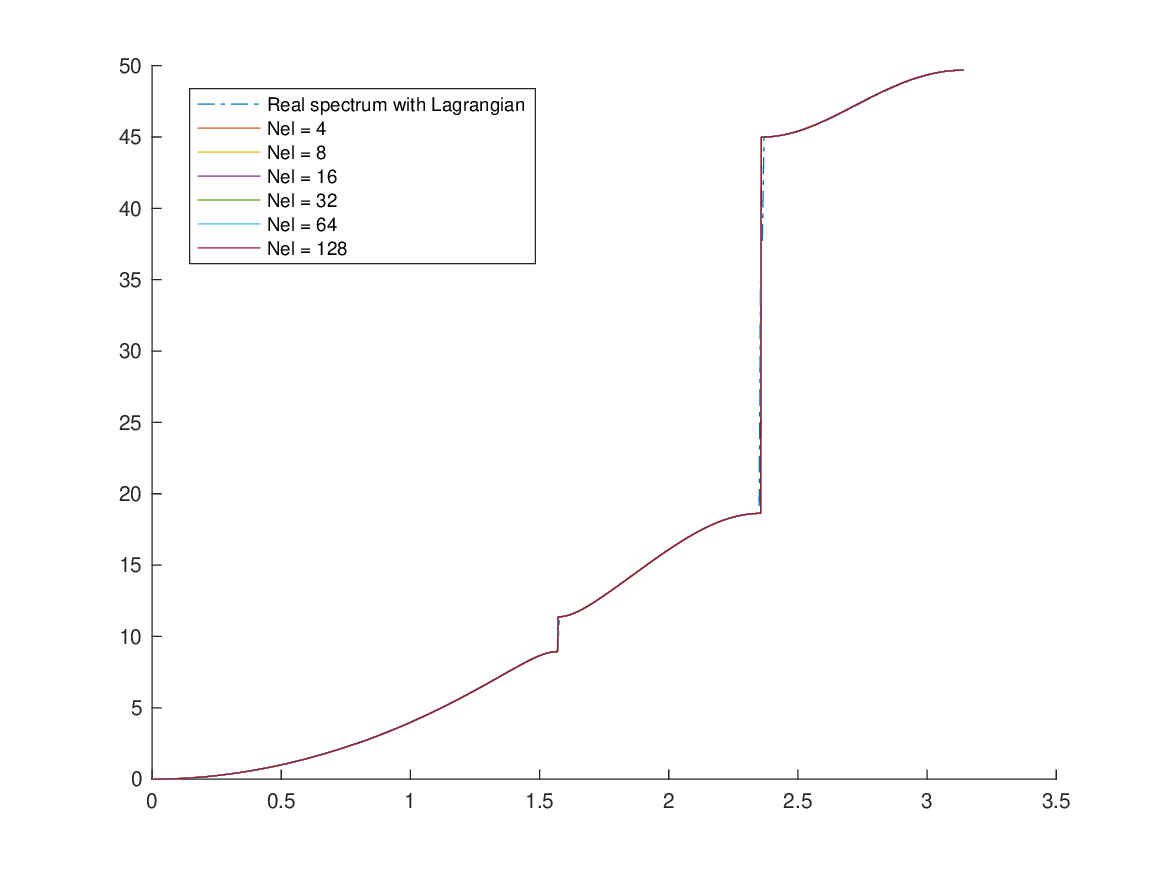}
  \includegraphics[width=0.49\textwidth]{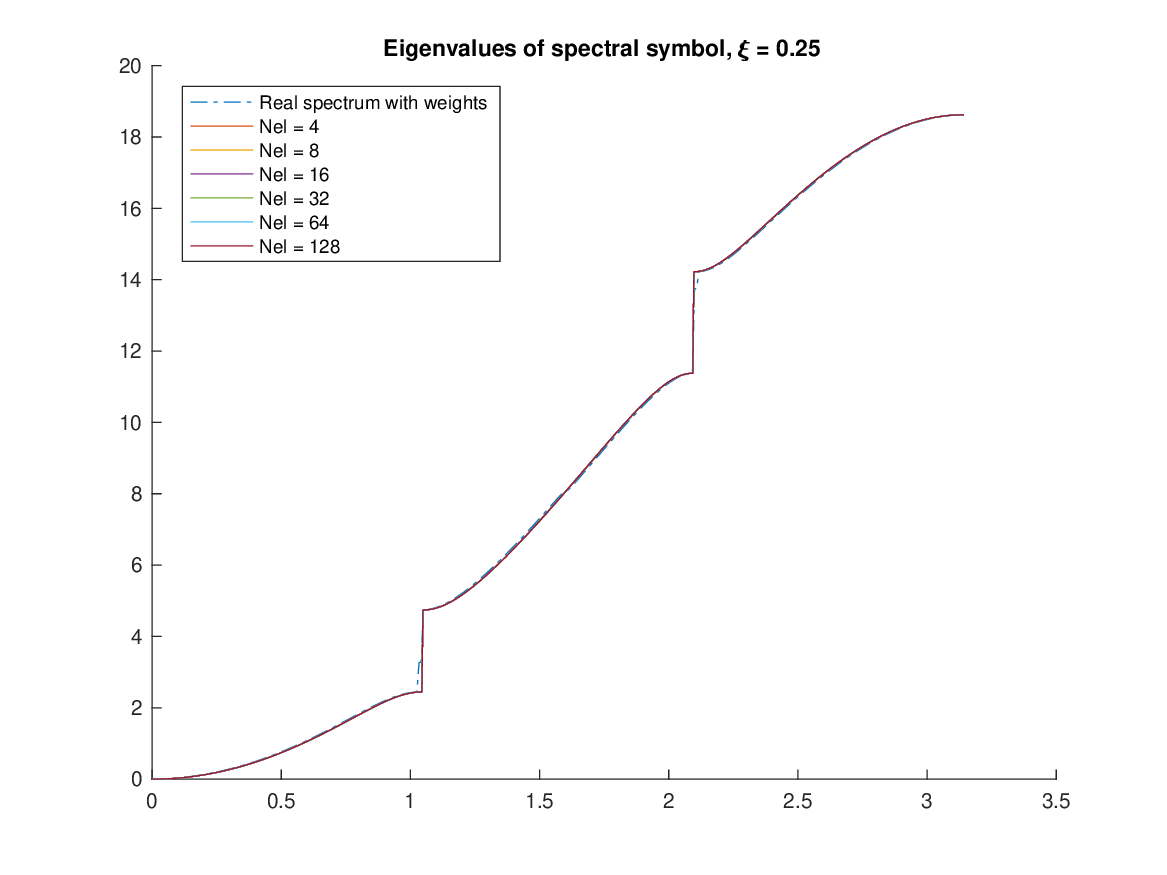}
  \caption{A plot of the spectrum of the Laplacian with constant coefficients. Solid line represents the continuous spectrum, dashed lines represent the symbol associated with different mesh sizes. Lagrange equispaced elements (left) and optimized weights (right) are compared. The degree $ r = 3 $ is depicted.}
  \label{fig:spectrumdeg3}
\end{figure}

In the first experiment, we compare the conditioning of the stiffness matrices and the observe the optimality of the proposed preconditioning strategy. Results are reported in Table \ref{tab:UniformMeshdeg3} for $ r = 3 $ and Table \ref{tab:UniformMeshdeg4} for $ r = 4 $.
Tables show that a slight improvement in the conditioning yields in turn a significant reduction in the number of iterations when the preconditioned conjugate gradients is applied. For both the considered degrees, this can be appreciated in terms of runtime, as shown in Figure \ref{fig:runtime}.

\begin{table}[h]
  \centering
  \begin{tabular}{|c|c|c|c|c|}
    \hline
    \multicolumn{1}{|c|}{} & \multicolumn{2}{|c|}{Conditioning} & \multicolumn{2}{|c|}{Iterations} \\
    \hline
    elements & Lagrangian & weights & Lagrangian & weights \\
    \hline
    $ 10 $ & $ 9.11 \times 10^2 $ & $ 7.75 \times 10^2 $ & $ 7 $ & $ 4 $ \\
    $ 20 $ & $ 3.65 \times 10^3 $ & $ 3.10 \times 10^3 $ & $ 9 $ & $ 4 $ \\
    $ 40 $ & $ 1.46 \times 10^4 $ & $ 1.24 \times 10^4 $ & $ 9 $ & $ 4 $ \\
    $ 80 $ & $ 5.83 \times 10^4 $ & $ 4.96 \times 10^4 $ & $ 10 $ & $ 5 $ \\
    $ 160 $ & $ 2.33 \times 10^5 $ & $ 1.98 \times 10^5 $ & $ 11 $ & $ 5 $ \\
    $ 320 $ & $ 9.33 \times 10^5 $ & $ 7.93 \times 10^5 $ & $ 11 $ & $ 6 $ \\
    $ 640 $ & $ 3.73 \times 10^6 $ & $ 3.17 \times 10^6 $ & $ 11 $ & $ 6 $ \\
    $ 1280 $ & $ 1.49 \times 10^7 $ & $ 1.27 \times 10^7 $ & $ 11 $ & $ 6 $ \\
    \hline
  \end{tabular}
  \caption{Comparison of the conditioning of the stiffness matrix and number of iterations needed by conjugate gradients after preconditioning via Circulant matrices for $ r = 3 $.}\label{tab:UniformMeshdeg3}
\end{table}

\begin{table}[h]
  \centering
  \begin{tabular}{|c|c|c|c|c|}
    \hline
    \multicolumn{1}{|c|}{} & \multicolumn{2}{|c|}{Conditioning} & \multicolumn{2}{|c|}{Iterations} \\
    \hline
    elements & Lagrangian & weights & Lagrangian & weights \\
    \hline
    $ 2 $ & $ 1.14 \times 10^2 $ & $ 9.06 \times 10^1 $ & $ 8 $ & $ 4 $ \\
    $ 4 $ & $ 4.81 \times 10^2 $ & $ 3.71 \times 10^2 $ & $ 15 $ & $ 4 $ \\
    $ 8 $ & $ 1.92 \times 10^3 $ & $ 1.48 \times 10^3 $ & $ 15 $ & $ 4 $ \\
    $ 16 $ & $ 7.70 \times 10^3 $ & $ 5.94 \times 10^3 $ & $ 15 $ & $ 4 $ \\
    $ 32 $ & $ 3.08 \times 10^4 $ & $ 2.38 \times 10^4 $ & $ 16 $ & $ 4 $ \\
    $ 64 $ & $ 1.23 \times 10^5 $ & $ 9.50 \times 10^4 $ & $ 16 $ & $ 5 $ \\
    $ 128 $ & $ 4.93 \times 10^5 $ & $ 3.80 \times 10^5 $ & $ 18 $ & $ 6 $ \\
    $ 256 $ & $ 1.97 \times 10^6 $ & $ 1.52 \times 10^6 $ & $ 18 $ & $ 6 $ \\
    \hline
  \end{tabular}
  \caption{Comparison of the conditioning of the stiffness matrix and number of iterations needed by conjugate gradients after preconditioning via Circulant matrices for $ r = 4 $.}\label{tab:UniformMeshdeg4}
\end{table}

\begin{figure}[h]
  \centering
  \includegraphics[width = 0.49\textwidth]{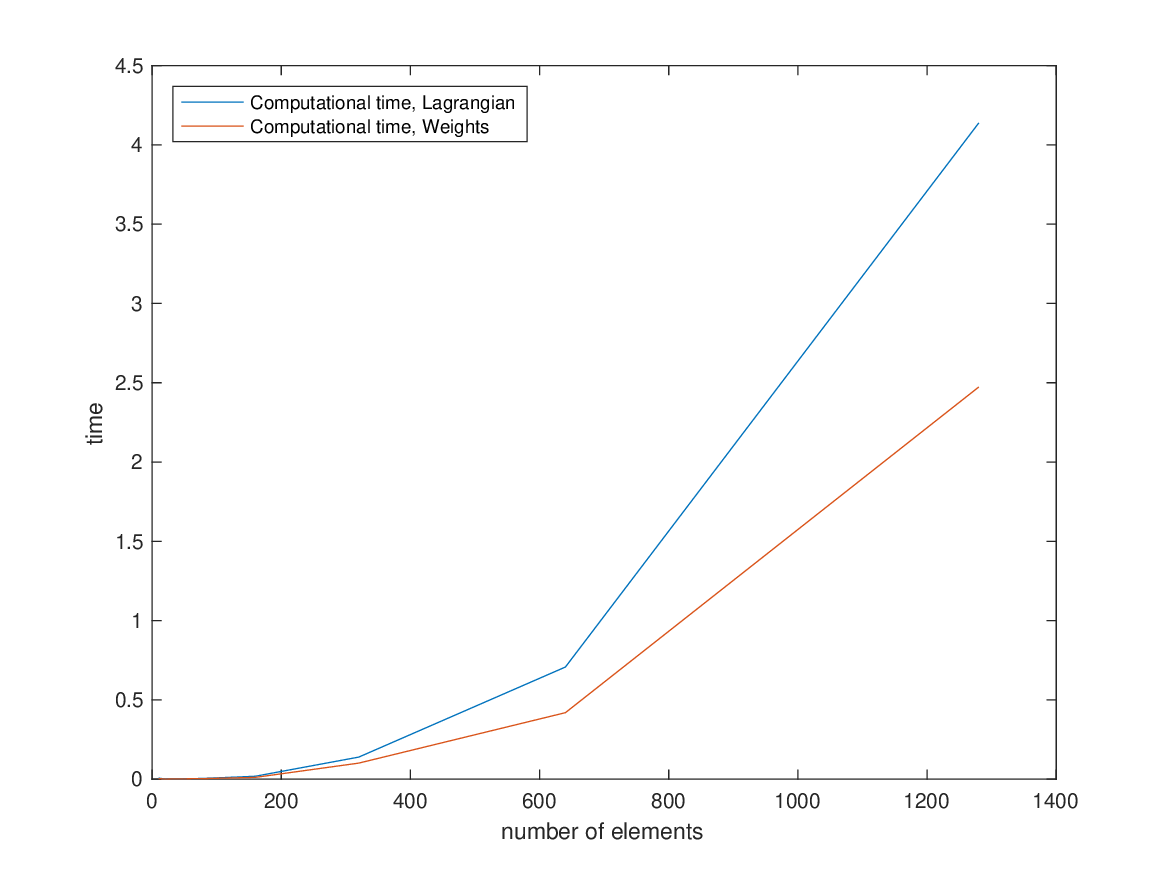} \includegraphics[width = 0.49\textwidth]{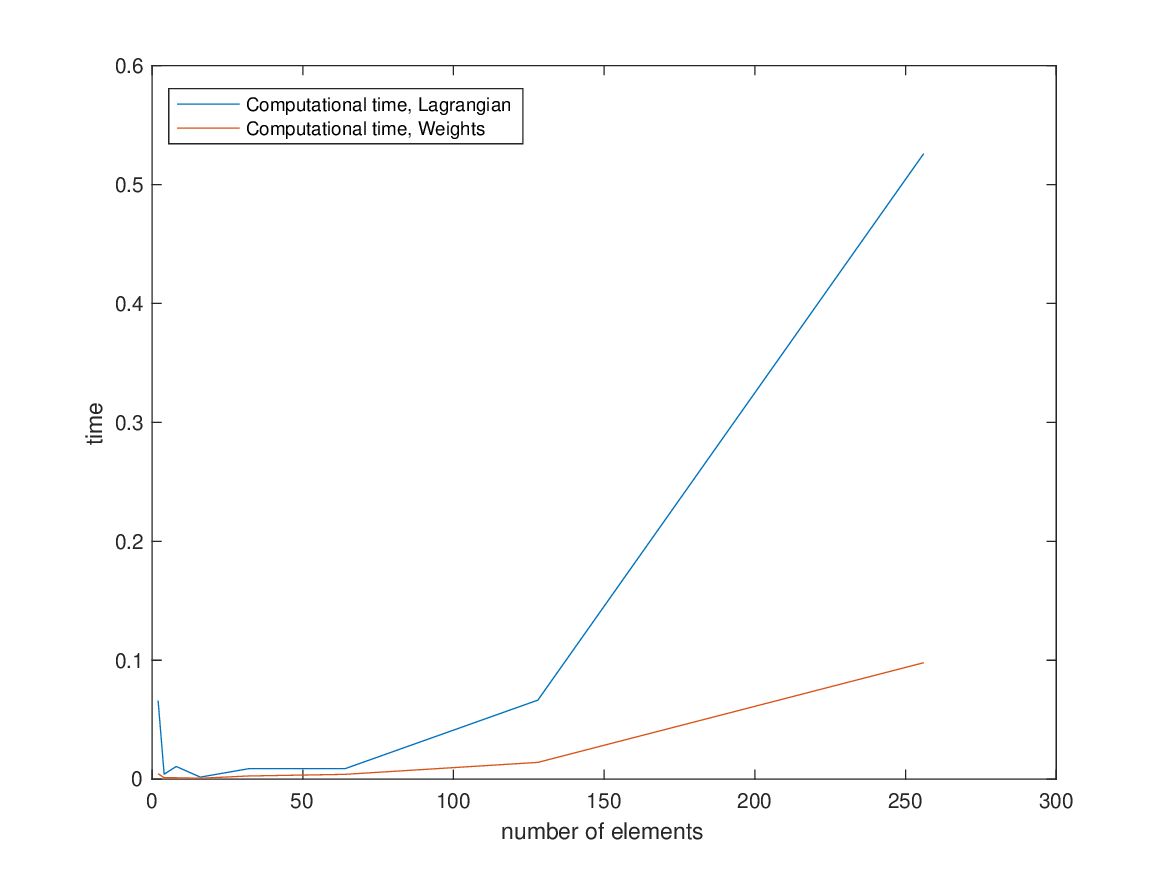}
  \caption{A comparison between equidistributed Lagrangian and optimised weights in terms of runtime.}
  \label{fig:runtime}
\end{figure}

\subsection{Non-constant coefficients}\label{sec:expnonconstcoeff}

In this section we repeat the same analysis done in Section \ref{sec:expconstcoeff}, in the case of non-constant coefficient $ b(x) $. For the sake of simplicity, we present an example in which $ b(x) = 1 + x^2 $, although GLT theory applies in the case of a function $b(x)$ Riemann integrable over its definition domain.
Figure \ref{fig:nonconstantcoeff} shows that the spectrum is captured by the spectral symbol.

\begin{figure}[!h]
  \includegraphics[width = 6cm]{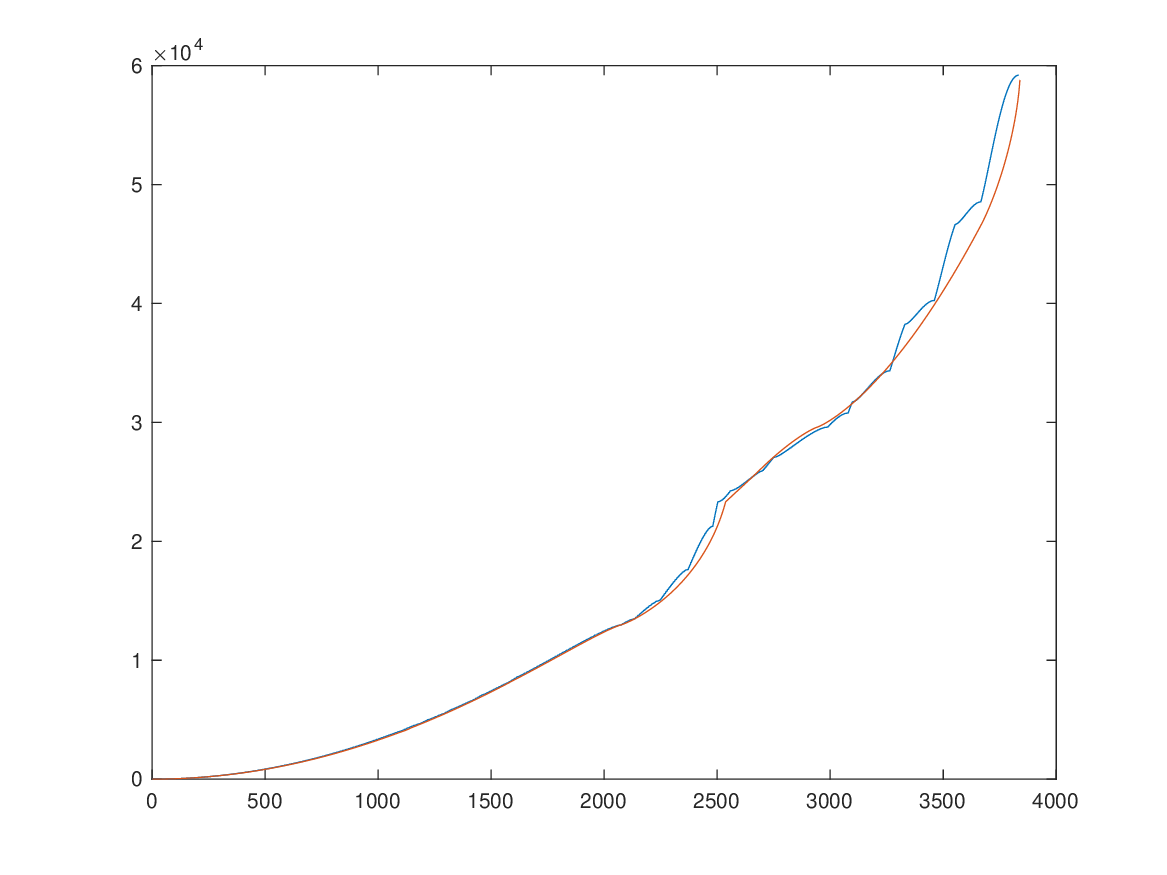} \quad \includegraphics[width = 6cm]{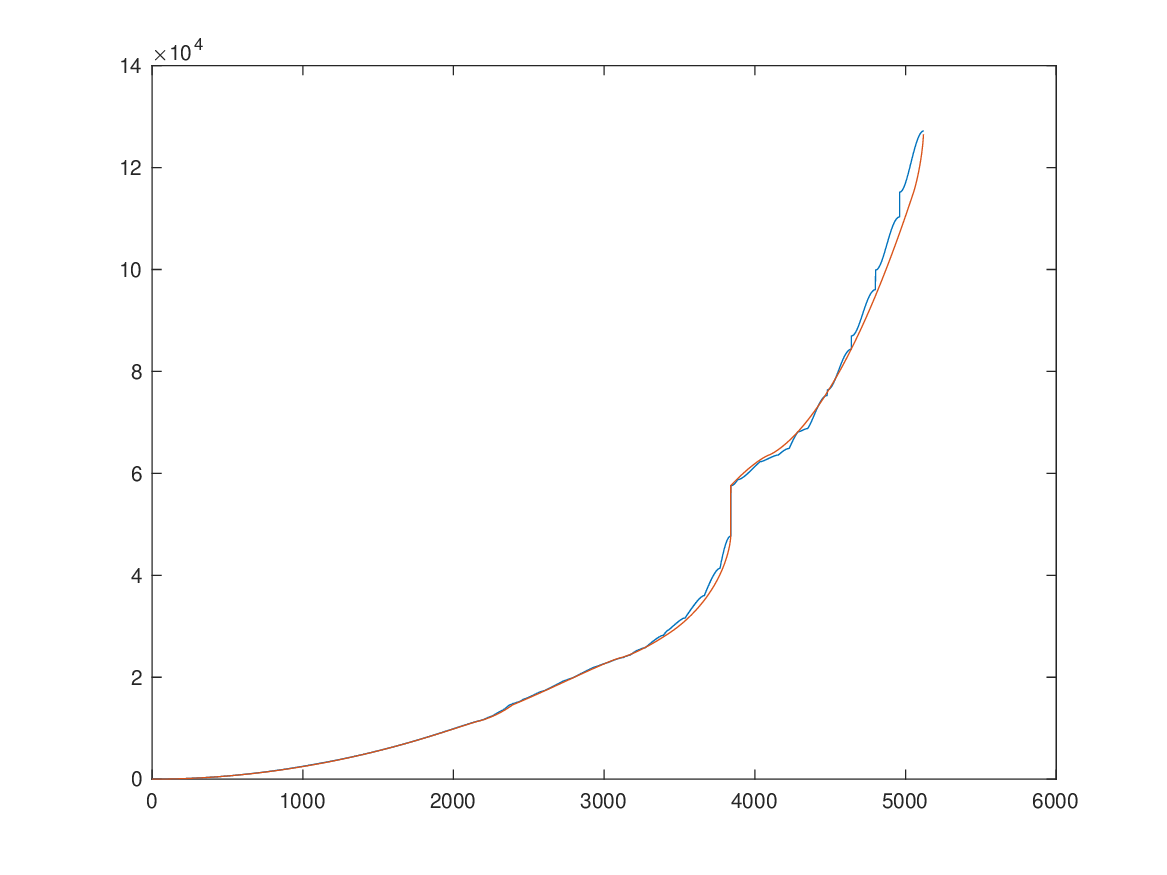} \\
  \includegraphics[width = 6cm]{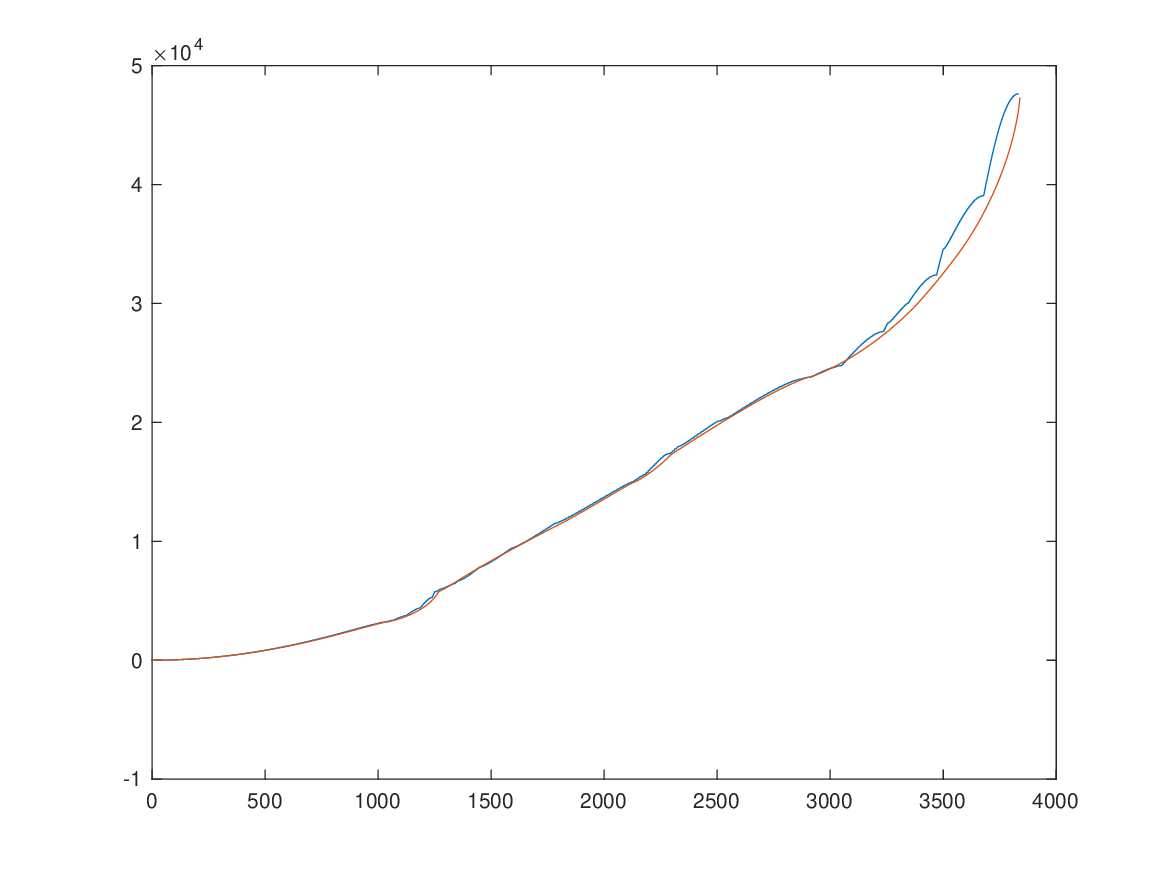} \quad \includegraphics[width = 6cm]{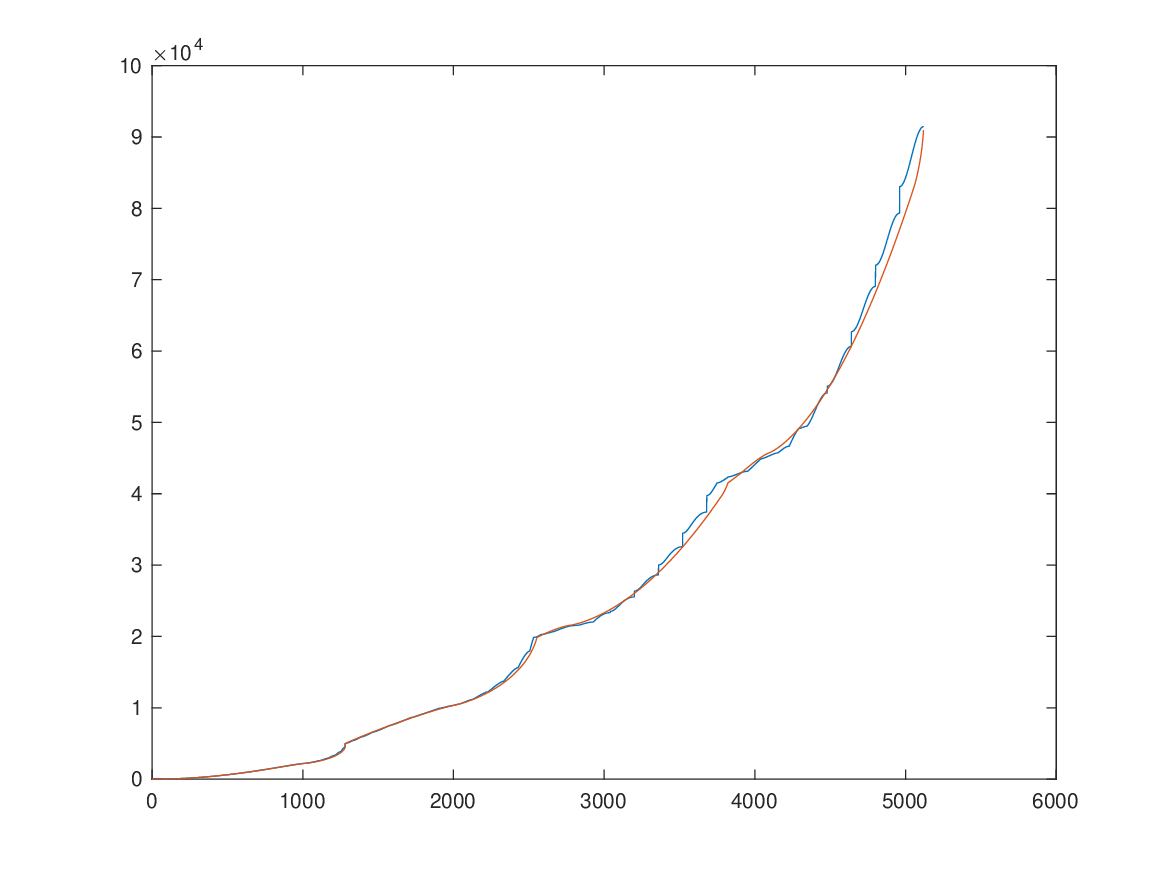}
  \caption{Spectrum of the operator (red) vs spectrum of the symbol (blue), in the case of non constant coefficients with $ a(x) = 1+x^2 $. Degree 3 (left) and degree 4 (right), for equidistrubuted Lagrangian (above) and optimized weights (bottom).} \label{fig:nonconstantcoeff}
\end{figure}

Clearly, non constant coefficients make it harder to construct an ad hoc preconditioner via Toeplitz or Circulant matrices, since elements do not scale only by a constant but accordingly to the coefficient $ b(x) $ as well.
Here we consider the Diagonal plus Circulant preconditioner with Strang correction based on the symbol $f_{r,b}^{\xi}(x,\theta)$ introduced above and defined as
\begin{equation} \label{eq:Prec:b}
P_n(b)=D_n([\sqrt b\,] I_r) S_nD_n([\sqrt b\,] I_r).
\end{equation}
By following the very same reasoning as before, we infer that  $\{\hat{A}_n(a)\}_n$ and $\{P_n(a)\}_n$ have the same GLT symbol $f^{r,a}_{\xi}$. Therefore, by exploiting axiom {\bf GLT3}, items 3, 4, we easily conclude that the preconditioned matrix-sequence has GLT symbol equal to $1$ and hence it is weakly clustered at $1$ in a spectral sense, the latter being an indication that the related preconditioned conjugate gradient is effective.

\begin{table}[h]
  \centering
  \begin{tabular}{|c|c|c|c|c|}
    \hline
    \multicolumn{1}{|c|}{} & \multicolumn{2}{|c|}{Conditioning} & \multicolumn{2}{|c|}{Iterations} \\
    \hline
    elements & Lagrangian & weights & Lagrangian & weights \\
    \hline
    $ 10 $ & $ 1.22 \times 10^3 $ & $ 1.05 \times 10^3 $ & $ 13 $ & $ 13 $ \\
    $ 20 $ & $ 5.14 \times 10^3 $ & $ 4.43 \times 10^3 $ & $ 14 $ & $ 14 $ \\
    $ 40 $ & $ 2.13 \times 10^4 $ & $ 1.82 \times 10^4 $ & $ 15 $ & $ 14 $  \\
    $ 80 $ & $ 8.66 \times 10^4 $ & $ 7.36 \times 10^4 $ & $ 16 $ & $ 15 $ \\
    $ 160 $ & $ 3.49 \times 10^5 $ & $ 2.96 \times 10^5 $ & $ 17 $ & $ 15 $ \\
    $ 320 $ & $ 1.40 \times 10^6 $ & $ 1.19 \times 10^6 $ & $ 18 $ & $ 16 $ \\
    $ 640 $ & $ 5.63 \times 10^6 $ & $ 4.78 \times 10^6 $ & $ 18 $ & $ 16 $ \\
    $ 1280 $ & $ 2.25 \times 10^7 $ & $ 1.91 \times 10^7 $ & $ 19 $ & $ 18 $ \\
    \hline
  \end{tabular}
  \caption{Comparison of the conditioning of the stiffness matrix and number of iterations needed by conjugate gradients after preconditioning via Circulant matrices for $ r = 3 $. Nonconstant coefficient.} \label{tab:nonconstdeg3}
\end{table}

\begin{table}[h]
  \centering
  \begin{tabular}{|c|c|c|c|c|}
    \hline
    \multicolumn{1}{|c|}{} & \multicolumn{2}{|c|}{Conditioning} & \multicolumn{2}{|c|}{Iterations} \\
    \hline
    elements & Lagrangian & weights & Lagrangian & weights \\
    \hline
    $ 2 $ & $ 1.34 \times 10^2 $ & $ 8.56 \times 10^1 $ & $ 8 $ & $ 8 $ \\
    $ 4 $ & $ 6.05 \times 10^2 $ & $ 4.29 \times 10^2 $ & $ 16 $ & $ 12 $ \\
    $ 8 $ & $ 2.58 \times 10^3 $ & $ 1.95 \times 10^3 $ & $ 22 $ & $ 13 $  \\
    $ 16 $ & $ 1.07 \times 10^4 $ & $ 8.33 \times 10^3 $ & $ 23 $ & $ 13 $ \\
    $ 32 $ & $ 4.44 \times 10^4 $ & $ 3.44 \times 10^4 $ & $ 23 $ & $ 15 $ \\
    $ 64 $ & $ 1.82 \times 10^5 $ & $ 1.40 \times 10^5 $ & $ 25 $ & $ 15 $ \\
    $ 128 $ & $ 7.36 \times 10^5 $ & $ 5.65 \times 10^5 $ & $ 27 $ & $ 16 $ \\
    $ 256 $ & $ 2.96 \times 10^6 $ & $ 2.28 \times 10^6 $ & $ 28 $ & $ 18 $ \\
    \hline
  \end{tabular}
  \caption{Comparison of the conditioning of the stiffness matrix and number of iterations needed by conjugate gradients after preconditioning via Circulant matrices for $ r = 4 $. Nonconstant coefficient.} \label{tab:nonconstdeg4}
\end{table}

Table \ref{tab:nonconstdeg3} reports the performance of the preconditioned conjugated gradient for the degree $ r = 3 $, whereas Table \ref{tab:nonconstdeg4} shows the performance associated for $ r = 4 $. It is worth noting that the number of iterations, in the case of weights, quickly stabilizes. Moreover, for $ r = 4 $ it remains significantly under that observed for the equidistributed Lagrangian case. The impact of this in terms of runtime is depicted in Figure \ref{fig:timenonconst} (left, $r=3$ and right, $r=4$). 

\begin{figure}[h]
  \centering
  \includegraphics[width = 0.49\textwidth]{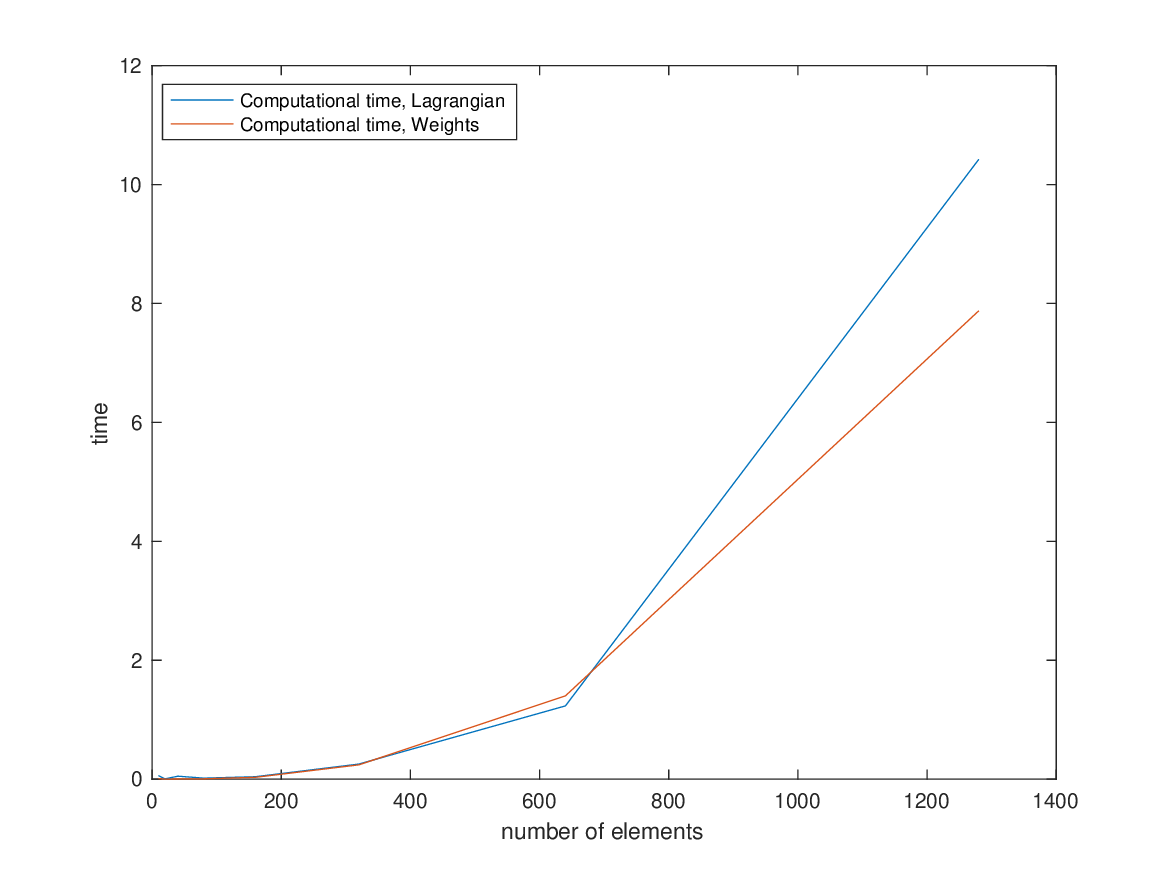}
  \includegraphics[width = 0.49\textwidth]{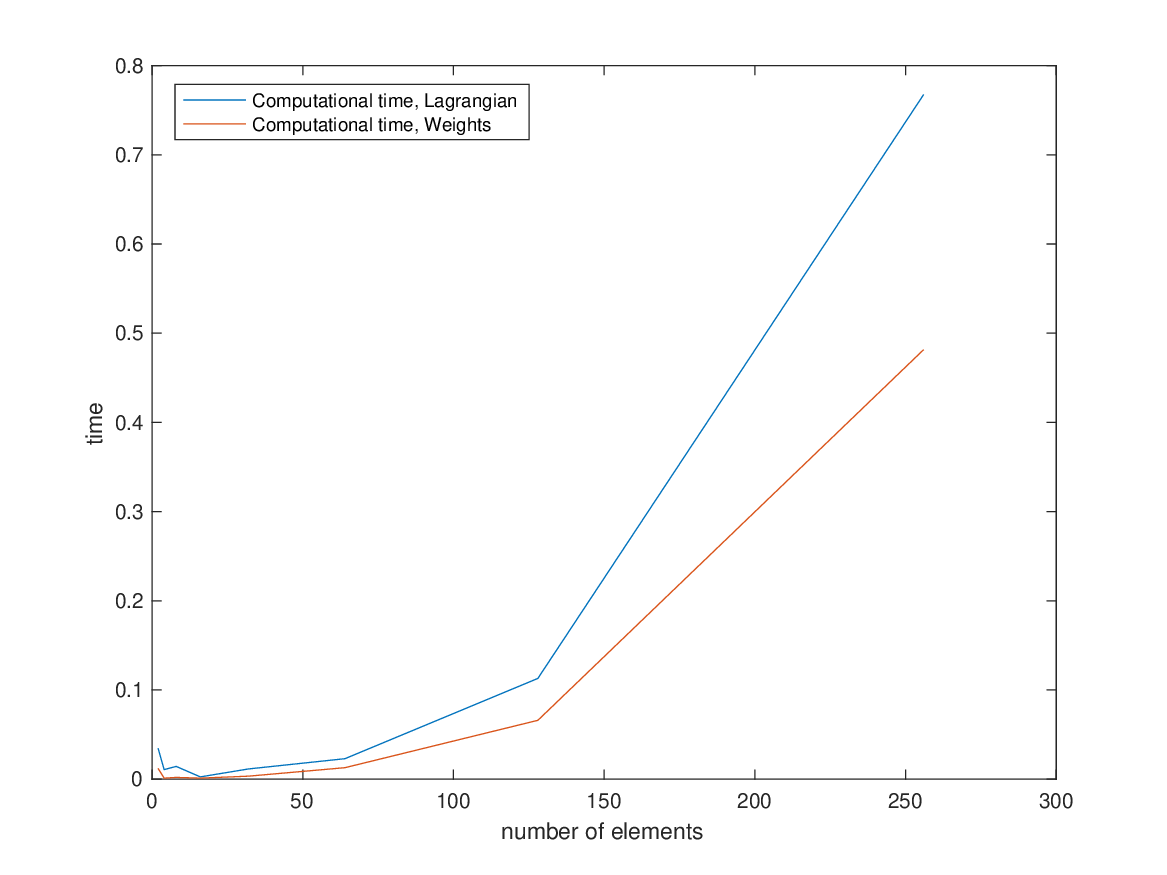}
  \caption{A comparison between equidistributed Lagrangian and optimised weights in terms of runtime in presence of a nonconstant coefficient.}
  \label{fig:timenonconst}
\end{figure}

\subsection{Graded mesh} \label{sect:quasiregularmesh}
We preferred to keep distinguished the case of nonconstant coefficients treated in Section \ref{sec:expnonconstcoeff} in order to better detect the effect of the mesh mapping; in this section we thus consider the elliptic problem with unit diffusion coefficient.
Let $g(x)$ be the mesh mapping function.
Following the remarks in Section~\ref{ssec:fr:quasiregular}, a preconditioner for the case of graded meshes can be obtained by setting $b(x)=\frac{1}{g'(x)}$ in \eqref{eq:Prec:b}.

In particular, we choose the mesh mapping $ g (x) = c e^{x} + k $, with $ c,k $ such that $ g(0) = 0 $ and $ g(1) = 1 $. 
Results are in accordance with the previous case and show an improvement in the case of optimized weights. A comparison in terms of iterations is depicted in Figure \ref{fig:varyingmesh}.

\begin{figure}[!h]
  \centering
  \includegraphics[width = 0.49\textwidth]{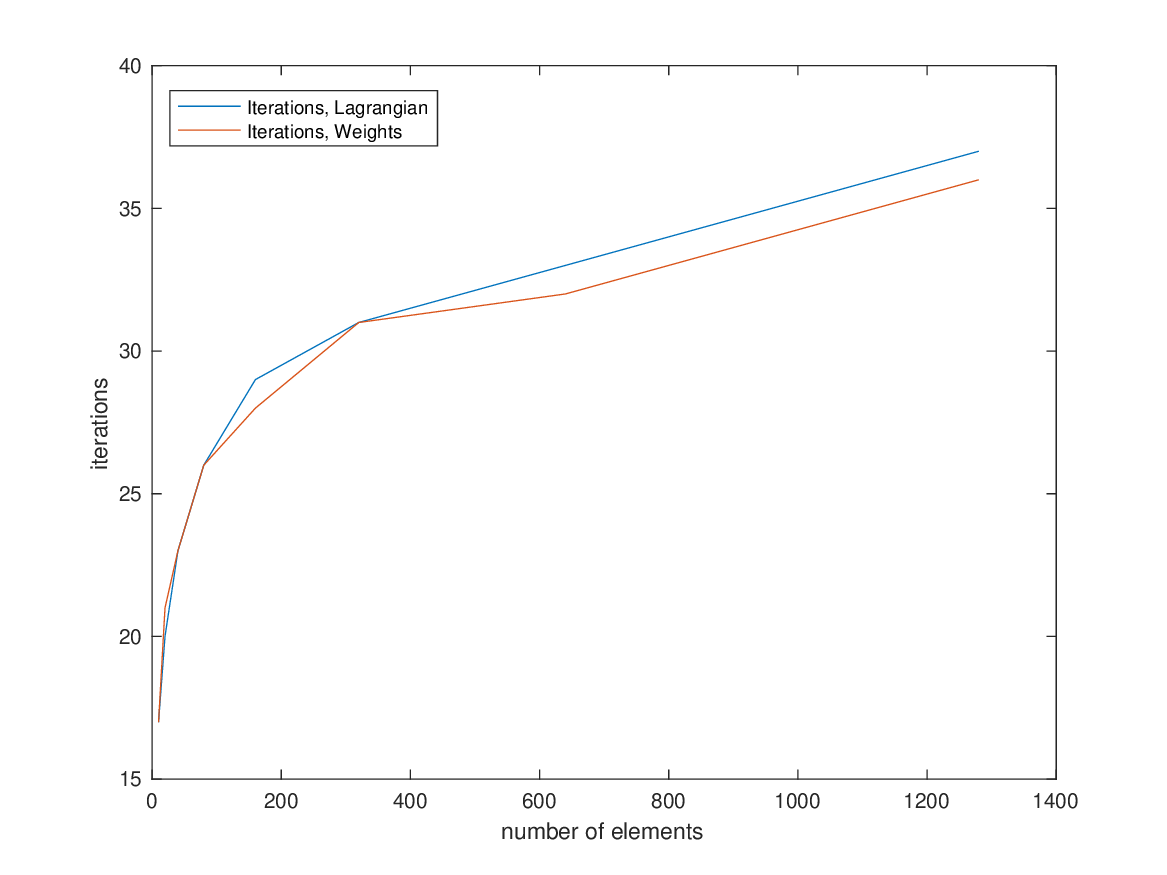} \includegraphics[width = 0.49\textwidth]{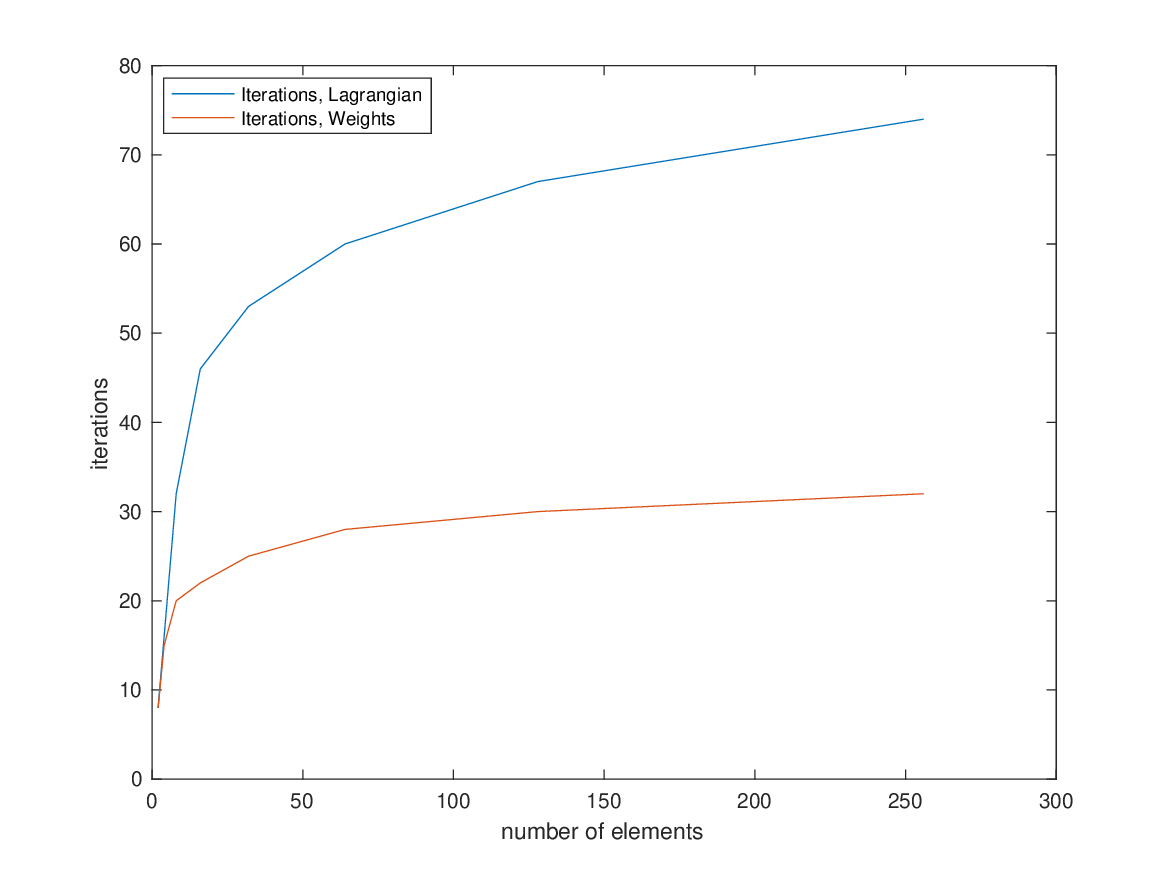}
  \caption{Mapped mesh. A comparison between equispaced Lagrangian and optimized weights in terms of PCG iterations for degrees $ r = 3 $ (left) and $ r = 4 $ (right).}
  \label{fig:varyingmesh}
\end{figure}

\subsection{Robustness of the preconditioner}

In this section we remove the hypothesis of uniformity or gradedness of the mesh. We in fact introduce in the mesh generator a randomness parameter $ \theta $. As we shall see, although the number of iterations and the runtime increase, it is evident that optimized weights improve both on the side of the number of iterations and on the side of the runtime.

The randomisation of the mesh is obtained as follows. First, we compute a uniform partitioning of the desired interval, say $ [0,1] $, and save the endpoints of the segments increasingly
$$ 0 = \xi_0 < \xi_1 < \ldots < \xi_n = 1 $$
with $ \xi_j = \frac{j}{n} $. Thus every segment $ I_j \doteq [\xi_{j-1}, \xi_j] $ has length $ |I_j | = \frac{j+1-j}{n} = \frac{1}{n} $. For each $ j = 1, \ldots, n-1 $ we create a random coefficient $ c_j \in \left( -\frac{1}{2n}, \frac{1}{2n} \right) $ and replace $ \xi_{j} $ with $ $ $\xi_i' = \xi_j + \theta c_j $. By construction, we obtain a sequence
$$ 0 = \xi_0' \doteq \xi_0 < \xi_1' < \ldots < \xi_n' \doteq \xi_n = 1 $$
and the desired randomised mesh is thus that whose $j$-th element is $ I_j' = [\xi_{j-1}', \xi_j'] $.

We propose the following numerical test to estimate the robustness of the preconditioner. We compare the performance, in terms of iterations, as the parameter $ \theta $ varies. In particular, we choose $ \theta $ increasing from $ 0.05 $ to $ 0.5 $ and report results in Figure \ref{fig:deg3randomMesh}.
\begin{figure}[h]
  \centering
  \includegraphics[width = 0.49\textwidth]{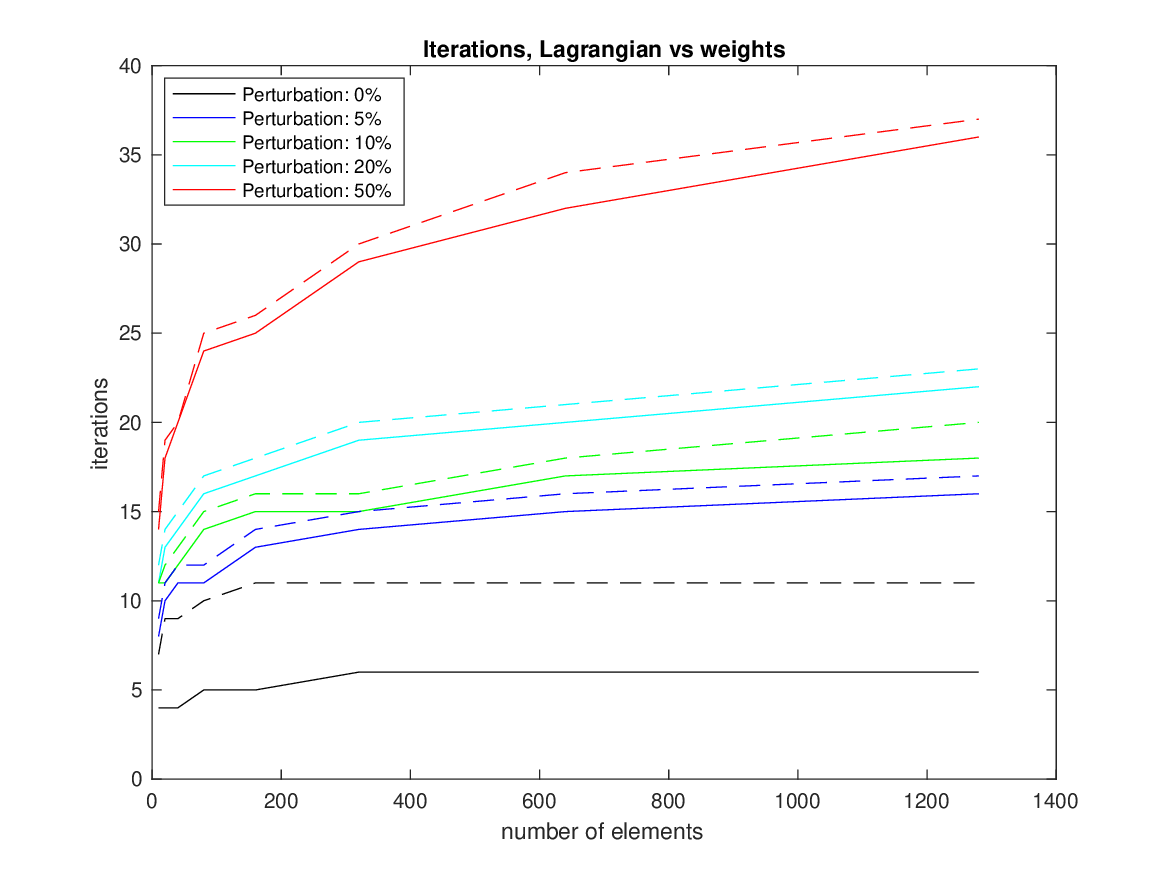} \includegraphics[width = 0.49\textwidth]{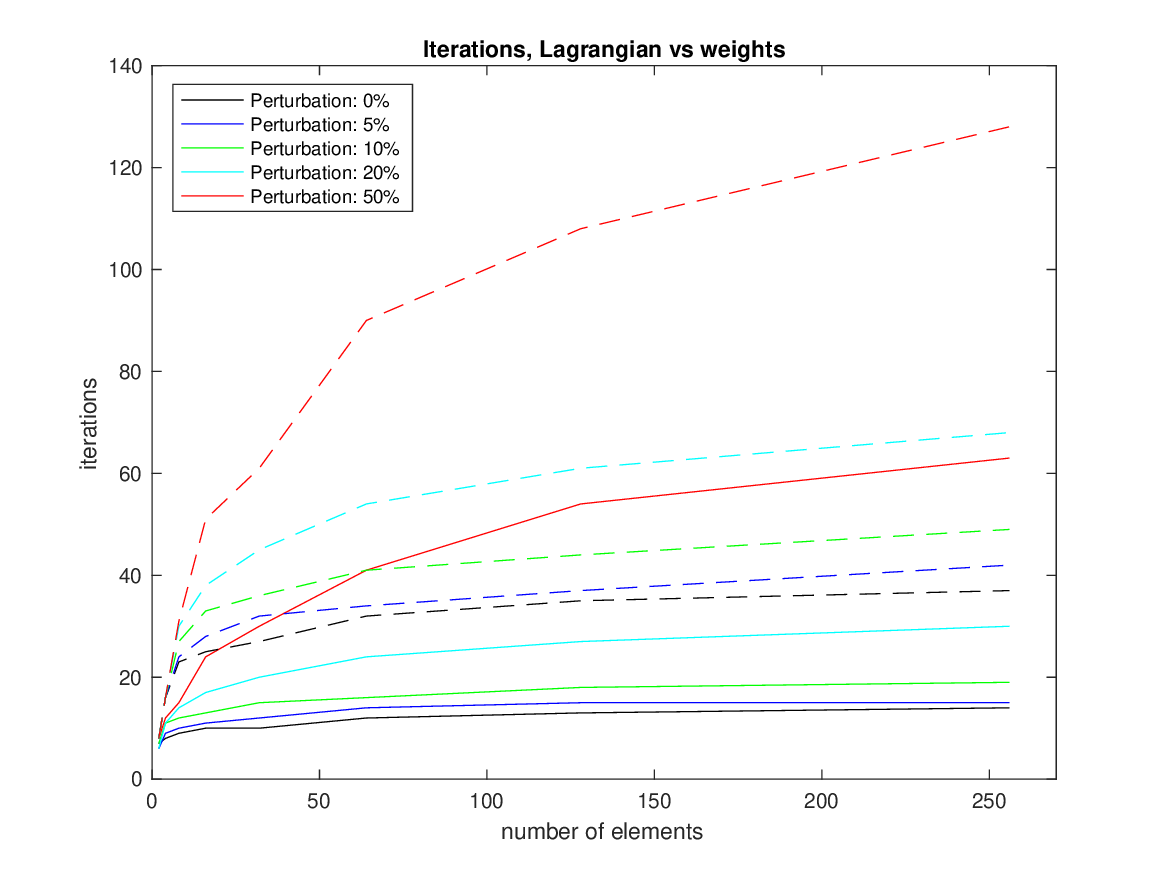}
  \caption{Randomised mesh. A comparison between equispaced Lagrangian and optimized weights in terms of PCG iterations for degrees $ r = 3 $ (left) and $ r = 4 $ (right). To same color corresponds the same entity of perturbation. Dashed lines represent equidistributed Lagrangian whereas solid lines represent optimized weights.}
  \label{fig:deg3randomMesh}
\end{figure}

We solve the linear system with the preconditioned conjugate gradient method, using the Circulant preconditioner built on the symbol $f^r_{\xi}$ which was computed for the uniform mesh case.
Clearly the analysis developed for the uniform case does not apply directly, as measures $ |I_j'|$'s may sensibly vary when increasing the parameter $\theta$.
Nevertheless, Figure \ref{fig:deg3randomMesh} shows that, up to a $50\%$ perturbation in the grid size ($\theta=0.5$),
the optimized weights perform better than the equidistributed Lagrangian (solid lines in the graph are always below the corresponding dashed ones) and that the preconditioner preserves some optimality properties (all lines show a ``plateaux'' for large grid sizes). The effects above are more pronounced for degree 4 discretizations, but visible also in the lower degree case.

\section{Conclusions}\label{sec:end}
In the current work we considered the weight generalization of Lagrangian finite elements of a standard onedimensional Laplacian. We have identified the formal expression of the matrix-sequences for the stiffness matrix of an elliptic test problem, in terms of the mesh size and problem/discretization variables.

Using the GLT theory, we have then provided a detailed spectral analysis of the matrix-sequence by emphasizing the spectral distribution in the Weyl sense, including the branch analysis and the clustering. We have also studied the extremal spectral behavior, which allows to identify the asymptotic conditioning in spectral norm, as the matrix order tends to infinity i.e. as the mesh size tends to zero. General techniques and formulae have been provided and the spectral analysis has been then used for identifying ad hoc preconditioners, in dependence of the discretization parameters and in connection with the preconditioned conjugate gradient method.
A spectral clustering at one has been proven for the resulting preconditioned matrix-sequences, also in the variable coefficient setting, which agrees with the observed optimal and robust performance of the used preconditioning technique.
Numerical visualization and experimental tests have been performed and critically discussed.
In view of extensions to higher dimensional cases, in which using regular meshes is harder, we have also tested with good results our preconditioning technique on randomly perturbed meshes.

As future steps, it is for sure of interest to consider the Laplacian in $d>1$ dimensions and more involved PDEs, knowing that the spectral multilevel block Toeplitz \cite{ty-1,tilli-L1} and the spectral multilevel block GLT tools are available in that setting \cite{GLTbookII,etnaGLTbookIV,barbarino-reducedGLT} too, also for dealing with non Cartesian domains. However, we should be aware that the preconditioning is hardly effective alone in the multidimensional case owing to the theoretical barriers proved in \cite{Tyrty1,Tyrty2,nega-gen}, and a multi-iterative approach has to be considered (see \cite{multi-galerkin,multi-colloc} and references therein), where the main underlying method is of multigrid type like in item {\bf a2.}, beginning of Section \ref{sec:numer}, and, as in the current work, the preconditioned conjugate gradient based on the GLT symbols is employed in the smoothing steps.
Moreover, the choice of the reference domain could lead to relevant difficulties \cite{BFZ23}, which have to be carefully handled. Nevertheless, because of the very general definition of weights in terms of differential forms, this kind of analysis can be applied and generalised to very large families of finite elements, helping thus in understanding and improving the conditioning associated with problems involving, for instance, Raviart-Thomas or N\'ed\'elec elements (see, e.g., \cite{Ainsworth} for a valuable perspective on the latter problem).

Discontinuous elements are suitable for this kind of analysis as well. However, substantial technical changes have to be introduced: for instance, Proposition \ref{prop:computingVanderm} and the related derived results shall be adapted to the new setting. Numerical results in that direction are promising and seem to confirm the fact that the indicated difficulties represent in fact technical aspects for the theoretical analysis, rather than insurmountable obstacles.

\section*{Acknowledgements and funding} The work of all authors is supported by the Italian Agency INdAM-GNCS. \\
The work of Ludovico Bruni Bruno is funded by INdAM and supported by Padova University. This author expresses his gratitude to Insubria University, Junior Postdoc project 2022-2023 leaded by Matteo Semplice, on whose framework the majority of this research has been developed.
\\
The work of Stefano Serra-Capizzano is funded from the European High-Performance Computing Joint Undertaking  (JU) under grant agreement No 955701. The JU receives support from the European Union’s Horizon 2020 research and innovation programme and Belgium, France, Germany, Switzerland.
Furthermore Stefano Serra-Capizzano is grateful for the support of the Laboratory of Theory, Economics and Systems – Department of Computer Science at Athens University of Economics and Business.

\bibliography{SmallSimplices}
%
%
%
%

\end{document}